\title{Model theory of fields with finite group scheme actions}
\author[D. M. HOFFMANN]{Daniel Max Hoffmann$^{\dagger}$}
\thanks{$^{\dagger}$SDG. The first author is supported by
the Polish National Agency for Academic Exchange.}
\address{$^{\dagger}$ Instytut Matematyki\\
Uniwersytet Warszawski\\
Warszawa\\
Poland
\newline \indent {\em and}\newline
\indent \hspace{1mm} Department of Mathematics \\ University of Notre Dame \\ Notre Dame \\ IN \\ USA}
\email{daniel.max.hoffmann@gmail.com}
\urladdr{https://sites.google.com/site/danielmaxhoffmann/home}
\author[P. KOWALSKI]{Piotr Kowalski$^{\spadesuit}$}
\thanks{$^{\spadesuit}$ Supported by the Narodowe Centrum Nauki grant no. 2018/31/B/ST1/00357 and by the T\"{u}bitak 1001 grant no. 119F397.}
\address{$^{\spadesuit}$Instytut Matematyczny\\
Uniwersytet Wroc{\l}awski\\
Wroc{\l}aw\\
Poland}
\email{pkowa@math.uni.wroc.pl} \urladdr{http://www.math.uni.wroc.pl/\textasciitilde pkowa/ }
\thanks{2010 \textit{Mathematics Subject Classification} Primary 03C60, 14L15 Secondary 11S20.}
\thanks{\textit{Key words and phrases}. Finite group scheme action, Model companion, Simple theory.}
\DeclareMathOperator{\acl}{acl}  
  \DeclareMathOperator{\id}{id}
 \DeclareMathOperator{\fr}{Fr} 
  \DeclareMathOperator{\gal}{Gal}
\DeclareMathOperator{\ch}{char}  
 \DeclareMathOperator{\alg}{alg}
\DeclareMathOperator{\spec}{Spec}\DeclareMathOperator{\rat}{rat}
\DeclareMathOperator{\sep}{sep}
\DeclareMathOperator{\dcf}{DCF}
\DeclareMathOperator{\mc}{mc}
\newtheorem{theorem}{Theorem}[section]
\newtheorem{prop}[theorem]{Proposition}
\newtheorem{lemma}[theorem]{Lemma}
\newtheorem{cor}[theorem]{Corollary}
\theoremstyle{definition}
\newtheorem{definition}[theorem]{Definition}
\newtheorem{example}[theorem]{Example}
\newtheorem{remark}[theorem]{Remark}
\newtheorem{question}[theorem]{Question}
\begin{document}
\newcommand{\fg}{\mathfrak{g}}
\newcommand{\lili}{\underleftarrow{\lim }}
\newcommand{\coco}{\underrightarrow{\lim }}
\newcommand{\twoc}[3]{ {#1} \choose {{#2}|{#3}}}
\newcommand{\thrc}[4]{ {#1} \choose {{#2}|{#3}|{#4}}}
\newcommand{\Zz}{{\mathds{Z}}}
\newcommand{\Ff}{{\mathds{F}}}
\newcommand{\Cc}{{\mathds{C}}}
\newcommand{\Rr}{{\mathds{R}}}
\newcommand{\Nn}{{\mathds{N}}}
\newcommand{\Qq}{{\mathds{Q}}}
\newcommand{\Kk}{{\mathds{K}}}
\newcommand{\Pp}{{\mathds{P}}}
\newcommand{\ddd}{\mathrm{d}}
\newcommand{\Aa}{\mathds{A}}
\newcommand{\dlog}{\mathrm{ld}}
\newcommand{\ga}{\mathbb{G}_{\rm{a}}}
\newcommand{\gm}{\mathbb{G}_{\rm{m}}}
\newcommand{\gaf}{\widehat{\mathbb{G}}_{\rm{a}}}
\newcommand{\gmf}{\widehat{\mathbb{G}}_{\rm{m}}}
\newcommand{\ka}{{\bf k}}
\newcommand{\ot}{\otimes}
\newcommand{\si}{\mbox{$\sigma$}}
\newcommand{\ks}{\mbox{$({\bf k},\sigma)$}}
\newcommand{\kg}{\mbox{${\bf k}[G]$}}
\newcommand{\ksg}{\mbox{$({\bf k}[G],\sigma)$}}
\newcommand{\ksgs}{\mbox{${\bf k}[G,\sigma_G]$}}
\newcommand{\cks}{\mbox{$\mathrm{Mod}_{({A},\sigma_A)}$}}
\newcommand{\ckg}{\mbox{$\mathrm{Mod}_{{\bf k}[G]}$}}
\newcommand{\cksg}{\mbox{$\mathrm{Mod}_{({A}[G],\sigma_A)}$}}
\newcommand{\cksgs}{\mbox{$\mathrm{Mod}_{({A}[G],\sigma_G)}$}}
\newcommand{\crats}{\mbox{$\mathrm{Mod}^{\rat}_{(\mathbf{G},\sigma_{\mathbf{G}})}$}}
\newcommand{\crat}{\mbox{$\mathrm{Mod}^{\rat}_{\mathbf{G}}$}}
\newcommand{\cratinv}{\mbox{$\mathrm{Mod}^{\rat}_{\mathbb{G}}$}}
\newcommand{\ra}{\longrightarrow}
\newcommand{\bdcf}{B-\dcf}
\makeatletter
\providecommand*{\cupdot}{%
  \mathbin{%
    \mathpalette\@cupdot{}%
  }%
}
\newcommand*{\@cupdot}[2]{%
  \ooalign{%
    $\m@th#1\cup$\cr
    \sbox0{$#1\cup$}%
    \dimen@=\ht0 %
    \sbox0{$\m@th#1\cdot$}%
    \advance\dimen@ by -\ht0 %
    \dimen@=.5\dimen@
    \hidewidth\raise\dimen@\box0\hidewidth
  }%
}

\providecommand*{\bigcupdot}{%
  \mathop{%
    \vphantom{\bigcup}%
    \mathpalette\@bigcupdot{}%
  }%
}
\newcommand*{\@bigcupdot}[2]{%
  \ooalign{%
    $\m@th#1\bigcup$\cr
    \sbox0{$#1\bigcup$}%
    \dimen@=\ht0 %
    \advance\dimen@ by -\dp0 %
    \sbox0{\scalebox{2}{$\m@th#1\cdot$}}%
    \advance\dimen@ by -\ht0 %
    \dimen@=.5\dimen@
    \hidewidth\raise\dimen@\box0\hidewidth
  }%
}
\makeatother

\def\Ind#1#2{#1\setbox0=\hbox{$#1x$}\kern\wd0\hbox to 0pt{\hss$#1\mid$\hss}
\lower.9\ht0\hbox to 0pt{\hss$#1\smile$\hss}\kern\wd0}

\def\ind{\mathop{\mathpalette\Ind{}}}

\def\notind#1#2{#1\setbox0=\hbox{$#1x$}\kern\wd0
\hbox to 0pt{\mathchardef\nn=12854\hss$#1\nn$\kern1.4\wd0\hss}
\hbox to 0pt{\hss$#1\mid$\hss}\lower.9\ht0 \hbox to 0pt{\hss$#1\smile$\hss}\kern\wd0}

\def\nind{\mathop{\mathpalette\notind{}}}

%\mbox{\rule{8pt}{8pt}}\vspace{0.3cm}\newline}
\maketitle
\begin{abstract}
We study model theory of fields with actions of a fixed finite group scheme. We prove the existence and simplicity of a model companion of the theory of such actions, which generalizes our previous results about truncated iterative Hasse-Schmidt derivations (\cite{HK}) and about Galois actions (\cite{HK3}). As an application of our methods, we obtain a new model complete theory of actions of a finite group on fields of finite imperfection degree.
\end{abstract}

\section{Introduction}
In this paper, we deal with model theory of actions of finite group schemes on fields. Our results here provide a generalization of both the results from \cite{HK} (model theory of truncated iterative Hasse-Schmidt derivations on fields) and the results from \cite{HK3} (model theory of actions of finite groups on fields). We explain below the nature of this generalization.

Let us fix a field $\ka$. By a finite group scheme over $\ka$, we mean the dual object to a finite-dimensional Hopf algebra over $\ka$ (see Section \ref{secprol}). Let us fix such a finite group scheme $\fg$. It is a classical result (\cite[Section 6.7]{Water}) that $\fg$ fits into the following short exact sequence:
$$1\ra \fg^0\ra \fg\ra \fg_{0}\ra 1,$$
where the finite group scheme $\fg^0$ is infinitesimal (that is: its underlying scheme is connected) and the finite group scheme $\fg_0$ is \'{e}tale (that is: it becomes a constant finite group scheme after a finite Galois base extension). As explained in \cite{HK}, the actions of infinitesimal finite group schemes on fields correspond to truncated iterative Hasse-Schmidt derivations. It is also clear that the actions of constant finite group schemes on fields correspond to actions of finite groups, and the model theory of such actions was analyzed in \cite{HK3}. It should be clear now that the model theory of actions of finite group schemes, which is the topic of this paper, encompasses the situations from \cite{HK} and \cite{HK3}.

Before describing the main results of this paper, let us compare the structures we consider here with a certain kind of \emph{operators} on fields. Moosa and Scanlon developed in \cite{MS1} a general theory of rings with iterative operators. They analyzed the model theory of fields with such operators in \cite{MS2} under two additional assumptions: the base field is of characteristic $0$ and the operators are \emph{free} (every free operator is inter-definable in a natural way with a certain iterative operator as explained in \cite{MS2}). The results from \cite{MS2} were extended to an arbitrary characteristic case (still in the free context) in \cite{BHKK}. The actions of finite group schemes, which are the topic of this paper, are the same as iterative operators in the sense of Moosa-Scanlon for \emph{constant} iterative systems (as we explain in Section \ref{secprol}), and they should be thought of as ``very non-free'' iterative operators.

We list here the main results of this paper.
\begin{enumerate}
\item The theory of actions of $\mathfrak{g}$ on fields has a model companion, which we denote by $\fg$-DCF (Theorem \ref{gdcfexists}).

\item The theory $\mathfrak{g}$-DCF is simple (Theorem \ref{simple}).

\item The theory of actions of a finite group $G$ on fields of characteristic $p>0$ and of finite imperfection degree $e$
has a model companion, which is bi-interpretable (after adding finitely many constants) with the theory $(\ga[1]\times G_{\Ff_p})$-DCF, where $\ga[1]$ is the kernel of the Frobenius endomorphism on the additive group scheme over the prime field $\Ff_p$ (Theorem \ref{appthm}).
\end{enumerate}
Coming back to the situation from \cite{MS2}, we should mention that one can not expect a generalization of the above results to the case of general iterative operators considered in  \cite{MS2}, since the theory of such operators on fields may be not companionable as it is shown in \cite{MS2} and \cite{BHKK} (we comment more on this issue in Remark \ref{ssrem}(5)).

This paper is organized as follows. In Section \ref{secprol}, we give the main definitions, provide some examples, and we also discuss the notion of a prolongation with respect to a fixed action of $\fg$. In Section \ref{secexist}, we show the existence of a model companion of the theory of $\fg$-actions on fields using the notion of a prolongation from Section \ref{secprol}. In Section \ref{secprop}, we analyze the model-theoretic properties of the theory obtained in Section  \ref{secexist}. In particular, we show by the Galois-theoretic methods that this theory is simple. As an application of the results from Sections \ref{secexist} and \ref{secprop}, we give in Section \ref{secapp} a new example of a theory fitting to the set-up considered by the first author in \cite{Hoff3}, that is we prove the existence of a model companion of the theory of actions of a finite group on fields of a fixed finite imperfection degree.

\section{Finite group scheme actions: first-order set-up and prolongations}\label{secprol}
In this section, we describe our set-up of finite group scheme actions both in the algebraic as well as in the model-theoretic context. Then, we define the notion of a prolongation and collect some technical tools, which will be needed in the sequel.

We fix for the rest of this paper a base field $\ka$.

\subsection{Finite group scheme actions: definitions and examples}\label{secdefex}
For the necessary background about affine group schemes, their actions, and Hopf algebras, we refer the reader to \cite{Water}. All the group schemes considered in this paper will be affine, so we will skip the adjective ``affine'' from now on. Let us fix a finite group scheme $\fg$ over $\ka$. Then $\fg=\spec(H)$, where $H$ is a finite dimensional Hopf algebra over $\ka$. We denote the comultiplication in $H$ by $\mu:H\to H\otimes_{\ka} H$, and we denote the counit map in $H$ by $\pi:H\to \ka$. We also fix the number $e:=\dim_{\ka}(H)$, which is called the \emph{order} of the finite group scheme $\fg$.
\begin{definition}
We say that $R$ is a \emph{$\fg$-ring}, if $R$ is a $\ka$-algebra together with a group scheme action of $\fg$ on $\spec(R)$.

Similarly, we define \emph{$\fg$-fields}, \emph{$\fg$-morphisms}, \emph{$\fg$-extensions}, etc.
\end{definition}

\begin{remark}\label{remiter}
Let $R$ be a $\ka$-algebra.
\begin{enumerate}
\item A $\fg$-ring structure on $R$ is the same as a $\ka$-algebra map $\partial:R\to R\otimes_{\ka} H$ such that $\pi\circ \partial=\id_R$  (the counit condition) and the following diagram commutes (the coassociativity condition):
$$
\xymatrixcolsep{4pc}
\xymatrix{R\otimes_{\ka} H \ar[r]^-{\id\otimes\mu}& (R\otimes_{\ka} H)\otimes_{\ka} H \\
R \ar[u]^-{\partial} \ar[r]^-{\partial} & R\otimes_{\ka} H \ar[u]_-{\partial\otimes\id}.}
$$
If $R$ is a $\fg$-ring and we want to emphasize the group scheme action on $R$, then we say: ``$(R,\partial)$ is a $\fg$-ring''.

\item A $\ka$-algebra map $\partial:R\to R\otimes_{\ka} H$ satisfying only the counit condition (from Item $(1)$ above) was called an \emph{$H$-operator} in \cite{BHKK}.
    %(the group scheme structure on $\fg$ does not matter here).
This notion originated from \cite{MS, MS2} and should be thought of as a ``free operator'' in the sense that it is free from the coassociativity condition (or the \emph{iterativity condition}) given by the multiplication in $\fg$.
\end{enumerate}
\end{remark}

\begin{example}\label{ex1}
We give below several examples of finite group schemes and we interpret their actions.
\begin{enumerate}
\item For a finite group $G$, we denote by $G_{\ka}$ the finite group scheme over $\ka$ corresponding to $G$, that is:
$$G_{\ka}:=\spec(\mathrm{Func}(G,\ka)),$$
where the comultiplication in $\mathrm{Func}(G,\ka)$ comes from the group operation in $G$. Then, $G_{\ka}$-rings coincide with $\ka$-algebras with actions of the group $G$ by $\ka$-algebra automorphisms.

\item Let us assume that $\mathrm{char}(\ka)=p>0$. A \emph{truncated group scheme} (see \cite{Chase2}) over $\ka$ is a finite group scheme whose underlying scheme is isomorphic to
    $$\spec\left(\ka[X_1,...,X_f]/\left(X_1^{p^m},\ldots,X_f^{p^m}\right)\right)$$
for some $f,m>0$. If the base field $\ka$ is perfect, then each infinitesimal group scheme over $\ka$ is truncated (see e.g. \cite[Corollary 6.3, p. 347]{DeGa}). If $\fg$ is a truncated group scheme, then $(R,\partial)$ is a $\fg$-ring if and only if $\partial$ is a $\fg$-derivation on $R$ in the sense of \cite[Definition 3.8]{HK1} (see also \cite[Definition 3.9]{HK1} for an interpretation in terms of $m$-truncated $f$-dimensional Hasse-Schmidt derivations on $R$ over $\ka$ satisfying a ``$\fg$-iterativity'' rule).

The simplest example of a truncated group scheme is the kernel of the Frobenius morphisms on the additive group scheme over $\ka$ and we denote this kernel by $\ga[1]$. Then, a $\ga[1]$-ring is the same as a $\ka$-algebra $R$ together with a $\ka$-derivation $d$ such that $d^{(p)}=0$ (the composition of $d$ with itself $p$ times).

\item Let us assume that $\fg=\fg_1\times \fg_2$. Then, an action of $\fg$ on a $\ka$-scheme $X$ may be understood as an action of $\fg_1$ on $X$ together an action of $\fg_2$ on $X$ such that these two actions commute with each other.

    In the case when $\mathrm{char}(\ka)=p>0$ and $\fg=\ga[1]\times G_{\ka}$ for a finite group $G$, a $\fg$-ring $R$ is the same as a derivation $d$ on $R$ such that $d^{(p)}=0$ (see Item $(2)$ above) together with an action of $G$ on $R$ by $\ka$-algebra differential automorphisms.

\item Assume that $\fg$ is a finite \'{e}tale group scheme. Then, there is a finite Galois extension $\ka\subseteq \ka'$ such that the group scheme $$\fg_{\ka'}:=\fg\times _{\spec(\ka)}\spec(\ka')$$
    over $\ka'$ is constant, that is: there is  finite group $G$ such that $\fg_{\ka'}\cong G_{\ka'}$ (see e.g. \cite[Section 6]{Water}).
    %Let $R$ be a $\ka$-algebra. Then, by the Galois descent (see e.g. \cite[Section 17.3]{Water}) a $\fg$-ring structure on $R$ is the same as an %$\gal(\ka'/\ka)$-invariant action of $G$ on $R\otimes_{\ka}\ka'$ by $\ka'$-algebra automorphisms.

     As an explicit non-constant example, let us take $\fg$ as the finite group scheme of third roots of unity over $\Qq$. That is:
$$\fg=\spec(\Qq[\varepsilon]),\ \ \ \ \Qq[\varepsilon]:=\Qq[X]/(X^3-1),$$
and the counit and the comultiplication in $\Qq[\varepsilon]$ are given by:
$$\pi(\varepsilon)=1,\ \ \ \ \ \mu(\varepsilon)=\varepsilon\otimes \varepsilon.$$
We also have that:
$$\fg_{\Qq(\zeta)}\cong \left(\Zz/3\Zz\right)_{\Qq(\zeta)},$$
where $\zeta=\exp(2\pi i/3)$ is the third primitive root of unity in $\Cc$.
\\
The following $\Qq$-algebra map:
$$\partial:\Qq(\sqrt[3]{2})\ra \Qq(\sqrt[3]{2})\otimes_{\Qq}\Qq[\varepsilon],\ \ \ \ \ \partial(\sqrt[3]{2}):=\sqrt[3]{2}\otimes \varepsilon$$
gives $\Qq(\sqrt[3]{2})$ a $\fg$-field structure. This action has an obvious functorial description on $R$-rational points ($R$ is a $\Qq$-algebra): the group of third roots of unity in $R$ naturally acts by the ring multiplication on the set of third roots of $2$ in $R$.
\end{enumerate}
\end{example}
As mentioned in the Introduction, we will provide now a comparison between the notion of a $\fg$-ring considered in this paper and the notion of a $\underline{\mathcal{D}}$-ring for a generalized Hasse-Schmidt system $\underline{\mathcal{D}}$ from \cite{MS1}. In short, the notion considered in \cite{MS1} is much more general than ours. A generalized Hasse-Schmidt system $\underline{\mathcal{D}}$ is an inverse system of schemes with scheme morphisms between them, which may be though of as ``partial group scheme structures''. Such an idea is also considered in \cite{Kam}, where $\underline{\mathcal{D}}$ is replaced with a basically equivalent notion of a \emph{formal group scheme}, which is a direct system of finite schemes together with some morphisms between them such that they become a group operation on the level of the formal limit. In our case, we consider just a constant system consisting of one finite group scheme $\fg$ (and the identity maps).

\subsection{First-order set-up for finite group scheme actions}\label{secfo}
In this subsection, we will describe a first-order language $L_{\fg}$ and an $L_{\fg}$-theory $\fg$-DF such that models of $\fg$-DF are exactly $\fg$-fields. If we skip the ``iterativity condition'' (see Remark \ref{remiter}(2)), then we are exactly in the set-up from \cite{BHKK}. Let us recall this set-up briefly.

We fix $\{b_0,\ldots,b_{e-1}\}$, which is a $\ka$-basis of $H$ such that $\pi(b_0)=1$ and $\pi(b_i)=0$ for $i>0$. Then, for a $\ka$-algebra $R$, any map $\partial:R\to R\otimes_kH$ can be identified with a sequence
$$\partial_0,\partial_1,\ldots,\partial_{e-1}:R\to R.$$
The map $\partial$ satisfies the counit condition (see Remark \ref{remiter}(1)) if and only if $\partial_0=\id_R$. To express the coassociativity condition, we need to use the structure constants for the comultiplication on $H$, which for $i,j,l<e$ are the elements $c^{i,j}_l\in \ka$ such that the following equality holds:
$$\mu(b_l)=\sum\limits_{i,j<e}c^{i,j}_l b_i\otimes b_j.$$
Then, $(R,\partial)$ satisfies the coassociativity condition if and only if for any $r\in R$, we have:
$$\partial_i\partial_j(r)=\sum_{l=0}^{e-1}c^{i,j}_l\partial_l(r).$$
Let $L_{\fg}$ be the language of rings expanded by $e-1$ unary function symbols and by extra constant symbols for the elements of $\ka$. It is clear by what is written above, that there is an $L_{\fg}$-theory, which we denote by $\fg$-DF, whose models are exactly $\fg$-fields. We are aiming towards showing that this theory has a model companion (Theorem \ref{gdcfexists}) and that this model companion is simple (Corollary \ref{simple}).
%For the basic notions of model theory (as the notion of model companion), we refer the reader to \textbf{Reference?Hodges?} and our reference to simplicity %theory is \textbf{Reference?Kim?}
\begin{example}\label{ex2}
We discuss here model-theoretic properties of $\fg$-fields from Example \ref{ex1}.
\begin{enumerate}
\item If $\fg=G_{\ka}$ for a finite group $G$, then the theory $\fg$-DF coincides with the theory of\emph{ $G$-transformal fields} from \cite{HK3}. It is proved in \cite{HK3} that the theory of $G$-transformal fields has a model companion (denoted $G$-TCF), which is supersimple of finite rank coinciding with the order of $G$.

\item If $\mathrm{char}(\ka)=p>0$ and $\fg$ is a truncated group scheme, then the theory $\fg$-DF was already introduced (with the same name in this truncated context) in \cite{HK}. It is shown in \cite{HK} that the theory $\fg$-DF has a model companion (denoted $\fg$-DCF), which is strictly stable.

\item If $\fg=\fg_1\times \fg_2$, where $\fg_1$ is non-trivial truncated and $\fg_2$ is non-trivial, then, as for as we know, the model theory of $\fg$-fields has not been considered before. If the finite group scheme  $\fg_2$ is constant, then this theory fits to the set-up considered by the first author in \cite{Hoff3}, which will be explained in Section \ref{secapp}.
%\textbf{Note that $|\fg_1\times \fg_2|=|\fg_1||\fg_2|$, hence interpreting with sequences of maps is not that obvious!! It is tensor product of Hopf %algebras not the fiber product as for $B$-operators!! Still, it terms of group actions it should be clear...}

\item If $\fg$ is a finite \'{e}tale group scheme which is not constant, then, as far as we know, the iterativity rules (and the ``Leibniz rules'') coming from such actions have not been studied yet. We consider the action from Example \ref{ex1}(4) and we use the notation from there. We have to fix a ``good basis'' $b_0,b_1,b_2$ first such that $\pi(b_0)=1$ and $\pi(b_1)=\pi(b_2)=0$. This choice is quite arbitrary, we have chosen the following:
    $$b_0:=\frac{1+\varepsilon+\varepsilon^2}{3},\ \ \ \  b_1:=\frac{\varepsilon-1}{3},\ \ \ \ b_2:=\frac{\varepsilon^2-1}{3}.$$
\begin{enumerate}
\item We have the following multiplication table:
     \begin{IEEEeqnarray*}{rClCrCl}
b_0^2 &=& b_0, &\qquad& b_1^2 &=& \frac{-2b_1+b_2}{3},\\
b_0b_1 &=& 0, &\qquad& b_2^2 &=& \frac{b_1-2b_2}{3},\\
b_0b_2 &=& 0, &\qquad& b_1b_2 &=& \frac{-b_1-b_2}{3}.
     \end{IEEEeqnarray*}
 Then, the multiplicative rule for $\fg$-operators (with respect to this basis) is the following one:
 $$\partial_1(xy)=\frac{-2\partial_1(x)\partial_1(y)-\partial_1(x)\partial_2(y)-\partial_2(x)\partial_1(y)+\partial_2(x)\partial_2(y)}{3},$$
 $$\partial_2(xy)=\frac{\partial_1(x)\partial_1(y)-\partial_1(x)\partial_2(y)-\partial_2(x)\partial_1(y)-2\partial_2(x)\partial_2(y)}{3}.$$

 \item  We have the following comultiplication table:
$$\mu(b_0)=b_0\otimes b_0 + 2b_1\otimes b_1 - b_1\otimes b_2 - b_2\otimes b_1 + 2b_2\otimes b_2,$$
$$\mu(b_1)=b_0\otimes b_1 + b_1\otimes b_0 + b_1\otimes b_1 - b_1\otimes b_2 - b_2\otimes b_1,$$
$$\mu(b_2)=b_0\otimes b_2 + b_2\otimes b_0 + b_2\otimes b_2 - b_1\otimes b_2 - b_2\otimes b_1,$$
which gives the following $\fg$-iterativity rules ($\partial_0=\id$):
\begin{IEEEeqnarray*}{rCl}
\partial_1\circ \partial_1 &=& 2\partial_0+\partial_1,\\
\partial_2\circ \partial_2 &=& 2\partial_0+\partial_2,\\
\partial_1\circ \partial_2 &=& -\partial_0-\partial_1-\partial_2=\partial_2\circ \partial_1.
\end{IEEEeqnarray*}

\item  It is interesting to see how the rules from Item (b) above transform into the rules for the usual action of $\Zz/3\Zz$ in the case when our $\fg$-field contains the primitive third root of unity $\zeta$ in the constants of the action. In such a case, we set the following new basis:
 $$b_0':=b_0=\frac{1+\varepsilon+\varepsilon^2}{3},\ \ \ \ b_1'=\frac{1+\zeta^2\varepsilon+\zeta\varepsilon^2}{3},\ \ \ \ b_2'=\frac{1+\zeta\varepsilon+\zeta^2\varepsilon^2}{3}.$$
 It is easy to see that it is an ``orthogonal basis'', that is $b_i'b_j'=\delta^i_j$ (the Kronecker delta) and that the comultiplication on $b_0',b_1',b_2'$ gives the constant group scheme coming from the group $\Zz/3\Zz$. Then, we have the following transformation rules explaining how a $\fg$-action given by $\partial_1,\partial_2$ becomes an action of $\Zz/3\Zz$ given by $\partial_1',\partial_2'$:
 $$\partial_1'=\frac{\zeta^2\partial_1-\zeta\partial_2}{\zeta-\zeta^2},\ \ \ \ \ \ \ \ \partial_2'=\frac{-\zeta\partial_1+\zeta^2\partial_2}{\zeta-\zeta^2}.$$
\end{enumerate}
\end{enumerate}
\end{example}

\subsection{Prolongations with respect to finite group scheme actions}
We fix in this subsection a $\fg$-field $(K,\partial)$. We will recall the notion of a prolongation from \cite{MS}, which is closely related to the Weil restriction of scalars. We need the following definition first.
\begin{definition}
For a $K$-algebra $R$, we consider a ``$\partial$-twisted'' $K$-algebra structure on $R\otimes_{\ka} H$, which is the $K$-algebra structure given by the following composition:
$$\xymatrix{K \ar[r]^-{\partial} &   K\otimes_{\ka} H \ar[r]^{} &   R\otimes_{\ka} H.}$$
We denote the ring $ R\otimes_{\ka} H$ with the above $\partial$-twisted $K$-algebra structure by $R\otimes_{\ka}^{\partial} H$.
\end{definition}
\begin{remark}\label{remfunc}
The functor $R\mapsto R\otimes_{\ka}^{\partial} H$ controls the $\fg$-ring extensions of $K$. More precisely, a map
$$\partial':R\ra R\otimes_{\ka}^{\partial} H$$
is a $K$-algebra homomorphism if and only if $(R,\partial')$ is a $\fg$-ring extension of $(K,\partial)$.
\end{remark}
By a \emph{$K$-variety}, we always mean a reduced $K$-irreducible algebraic subvariety of $\Aa^n_K$ for some $n>0$.
\begin{definition}\label{defprol}
Let $V$ be a $K$-variety.
\begin{enumerate}
\item A $K$-variety $\nabla(V)$ is called a \emph{$\partial$-prolongation} of $V$, if for each $K$-algebra $R$, there is a functorial bijection:
$$\nabla(V)(R) \longleftrightarrow V\left(R\otimes_{\ka}^{\partial} H\right).$$

\item For any $K$-algebra $R$, the counit map $\mu:H\to \ka$ induces a natural $K$-algebra map $R\otimes_{\ka}^{\partial} H\to R$, which yields the following functorial morphism:
    $$\pi_V:\nabla(V)\ra V.$$
\end{enumerate}
\end{definition}
From now on, we will regard the natural bijection from Definition \ref{defprol}(1) as the identity map.
\begin{remark}\label{itobs}
We collect here several observations regarding the notion of a $\partial$-prolongation.
\begin{enumerate}
\item As explained in \cite{MS}, $\partial$-prolongations exist and they coincide with the Weil restriction to $K$ of the base extension from $K$ to
$K\otimes_{\ka}^{\partial} H$.

\item On the level of rings, the $\partial$-prolongation functor is a left-adjoint functor to the following functor:
$$\mathrm{Alg}_K\ni R\mapsto R\otimes_{\ka}^{\partial} H\in \mathrm{Alg}_K.$$

\item It is good to point out that the notion of a $\partial$-prolongation does not depend on the group scheme structure on $\fg$. In particular, $\partial$-prolongations also exist if $\partial$ is a ``non-iterative'' $H$-operator on $K$ in the sense of Remark \ref{remiter}(2). However, we notice below that if $(K,\partial)$ is a $\fg$-field (which is our assumption here), then the $\partial$-prolongation functor still has some in-built iterativity properties (see Definition \ref{defc} and Remark \ref{last}).
\end{enumerate}
\end{remark}
\begin{example}
For finite group schemes considered in Example \ref{ex1}(1) and (2), the $\partial$-prolongation functor has a natural description. Let $V$ be a $K$-variety.
\begin{enumerate}
\item If $\fg=G_{\ka}$ for a finite group $G$, then $G$ acts on $K$ by field automorphisms. Let us recall that for any $\sigma\in \mathrm{Aut}(K)$, we have the ``$\sigma$-twisted'' variety $V^{\sigma}$ defined as:
$$V^{\sigma}:=V\times_{\spec(K)}(\spec(K),\spec(\sigma)).$$
If $G=\{g_1,\ldots,g_e\}$, then we have:
$$\nabla(V)=V^{g_1}\times \ldots \times V^{g_e}$$
(see \cite[Remark 2.7(4)]{HK3}).

\item If $\mathrm{char}(\ka)=p>0$ and $\fg$ is a finite truncated group scheme, then $\nabla(V)$ is a torsor of a \emph{higher tangent bundle} of $V$. For a more explicit description, let us consider the simplest case when $\fg=\ga[1]$ and $p=2$. Then, we know that $\partial$ corresponds to a derivation $d$ on $K$ such that $d\circ d=0$. If $V$ is defined over the kernel of $d$, then $\nabla(V)$ coincides with the tangent bundle of $V$.
\end{enumerate}
\end{example}
\begin{definition}\label{defpv}
Let $V$ be a $K$-variety and $(R,\partial')$ be a $\fg$-ring extension of $(K,\partial)$. By Remark \ref{remfunc}, the map
$$\partial':R\ra R\otimes_{\ka}^{\partial} H$$
is a $K$-algebra homomorphism, hence it induces the following map, which is natural in $V$ (but it is not a morphism!):
$$\partial'_V:=V(\partial'):V(R)\ra V\left(R\otimes_{\ka}^{\partial} H\right)=\nabla(V)(R).$$
\end{definition}
It is easy to observe the following.
\begin{remark}\label{easy}
Let us assume that we are in the situation from Definition \ref{defpv}.
\begin{enumerate}
\item The following diagram is commutative:
$$
\xymatrixcolsep{4pc}
\xymatrix{V(R) \ar[r]^-{\partial'_V}& \nabla(V)(R) \\
V(K) \ar[u]^-{\subseteq} \ar[r]^-{\partial_V} & \nabla(V)(K) \ar[u]_-{\subseteq}.}
$$

\item Let as assume that $V\subseteq \Aa^n$ and let $\partial_1,\ldots,\partial_{e-1}$ (see Section \ref{secfo}) act coordinate-wise on $\Aa^n(R)=R^n$.
Then, the map $\partial_V':V(R)\to \nabla(V)(R)$ is given by:
$$\partial_V'(r)=\left(r,\partial_1'(r),\ldots,\partial_{e-1}'(r)\right),$$
and the map $\pi_V$ corresponds to the projection on the first ($n$-tuple) coordinate.
\end{enumerate}
\end{remark}
We will describe now a morphism $\nabla(V)\ra \nabla\left(\nabla(V)\right)$, which reflects the ``iterativity'' of $\partial$ on the level of the $\partial$-prolongation (it was mentioned in Remark \ref{itobs}(3)). We need a lemma first.
\begin{lemma}\label{doubletwist}
The map
$$\id\otimes\mu: R\otimes_{\ka}^{\partial} H\ra \left(R\otimes_{\ka}^{\partial} H\right)\otimes_{\ka}^{\partial} H$$
is a  morphism of $K$-algebras.
\end{lemma}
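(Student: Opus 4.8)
The plan is to unravel the two $\partial$-twisted $K$-algebra structures appearing in the statement and check that the map $\id\otimes\mu$ intertwines them; since $\id\otimes\mu\colon R\otimes_{\ka}H\to (R\otimes_{\ka}H)\otimes_{\ka}H$ is already a homomorphism of $\ka$-algebras (it is $\id_R$ tensored with the comultiplication $\mu$, which is a $\ka$-algebra map by the Hopf algebra axioms), the only thing to verify is that it respects the two relevant $K$-algebra structures, i.e.\ that the outer square of the following diagram commutes:
\begin{equation*}
\xymatrix{
K \ar[r]^-{\partial} \ar[d]_-{\partial} & K\otimes_{\ka}H \ar[r] & R\otimes_{\ka}H \ar[dd]^-{\id\otimes\mu}\\
K\otimes_{\ka}H \ar[d] & & \\
R\otimes_{\ka}H \ar[r]^-{\partial\otimes\id} & (R\otimes_{\ka}H)\otimes_{\ka}H \ar@{=}[r] & (R\otimes_{\ka}H)\otimes_{\ka}H.
}
\end{equation*}
Here the left-hand and top composites, followed by $\id\otimes\mu$, give the structure map defining the $K$-algebra $(R\otimes_{\ka}^{\partial}H)\otimes_{\ka}^{\partial}H$ composed with $\id\otimes\mu$; the bottom-then-right composite is $\id\otimes\mu$ precomposed with the $K$-algebra structure map of $R\otimes_{\ka}^{\partial}H$. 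So the claim reduces to commutativity of the diagram of $\ka$-algebra maps obtained by restricting to $K\to R\otimes_{\ka}H\to (R\otimes_{\ka}H)\otimes_{\ka}H$.

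The key step is to recognize that this commutativity is precisely the coassociativity condition for $\partial$ recorded in Remark \ref{remiter}(1). Indeed, chasing $r\in K$: along one route $r\mapsto \partial(r)\in K\otimes_{\ka}H$, then push into $R\otimes_{\ka}H$, then apply $\id\otimes\mu$, landing in $(R\otimes_{\ka}H)\otimes_{\ka}H$; along the other route $r\mapsto \partial(r)\in K\otimes_{\ka}H$, push into $R\otimes_{\ka}H$, then apply $\partial\otimes\id$. Since $R$ is a $K$-algebra and everything is $K$-linear, it suffices to check this for elements of the form $\partial(r)$ with $r$ running over $K$, and there the identity $(\id\otimes\mu)\circ\partial = (\partial\otimes\id)\circ\partial$ is exactly the commuting square in Remark \ref{remiter}(1), now read with values in $R$ (or $R\otimes_{\ka}H$) rather than in $K$ itself — which is legitimate because the base change $K\to R$ is a ring map and the coassociativity diagram is natural. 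Concretely, in the basis $\{b_0,\dots,b_{e-1}\}$ with structure constants $c^{i,j}_l$, one would write $\partial(r)=\sum_i \partial_i(r)\otimes b_i$ and use $\mu(b_i)=\sum_{j,l}c^{j,l}_i b_j\otimes b_l$ together with $\partial_j\partial_l=\sum_i c^{j,l}_i\partial_i$ to see both sides equal $\sum_{i,j,l}\partial_i(r)\otimes b_j\otimes b_l \cdot c^{?}$ — but this bookkeeping is exactly the content of coassociativity and need not be repeated.

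The main (minor) obstacle is purely notational: one must be careful about \emph{which} $K$-algebra structures are in play on each copy of $\otimes_{\ka}H$, since the superscript $\partial$ silently changes the structure map, and the outer $\otimes_{\ka}^{\partial}$ in $(R\otimes_{\ka}^{\partial}H)\otimes_{\ka}^{\partial}H$ twists by $\partial$ viewed as landing in $R\otimes_{\ka}^{\partial}H$ via the inner twist. Once one writes down the structure map of the target explicitly as the composite $K\xrightarrow{\partial}K\otimes_{\ka}H\xrightarrow{\partial\otimes\id}(R\otimes_{\ka}H)\otimes_{\ka}H$ (using Remark \ref{easy}/the definitions), the verification collapses to the single coassociativity square and the fact that $\id\otimes\mu$ is a $\ka$-algebra map. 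No genuine difficulty arises beyond keeping the twists straight.
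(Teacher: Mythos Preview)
Your argument is correct and follows exactly the paper's approach: observe that $\id\otimes\mu$ is already a $\ka$-algebra map, then reduce the $K$-linearity check to the coassociativity square from Remark~\ref{remiter}(1). One small cleanup: in your displayed diagram the bottom arrow labeled $\partial\otimes\id$ out of $R\otimes_{\ka}H$ does not type-check (since $\partial$ is only defined on $K$); as your own later text indicates, the intended map is $K\xrightarrow{\partial}K\otimes_{\ka}H\xrightarrow{\partial\otimes\id}K\otimes_{\ka}H\otimes_{\ka}H\xrightarrow{\iota\otimes\id\otimes\id}R\otimes_{\ka}H\otimes_{\ka}H$, which is precisely how the paper draws it.
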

\begin{proof}
%To ease the notation, let us denote:
%$$R':=R\otimes_{\ka}^{\partial} H.$$
Let $\iota:K\to R$ be the $K$-algebra structure map. By Remark \ref{remiter}(1), the following diagram commutes:
$$
\xymatrixcolsep{4pc}
\xymatrix{R\otimes_{\ka} H \ar[r]^-{\id\otimes\mu} & R\otimes_{\ka} H\otimes_{\ka} H \\
K\otimes_{\ka} H \ar[r]^-{\id\otimes\mu} \ar[u]^-{\iota\otimes\id}& K\otimes_{\ka} H\otimes_{\ka} H \ar[u]_-{\iota\otimes\id\otimes \mu}\\
K \ar[r]^-{\partial} \ar[u]^-{\partial} & K\otimes_{\ka} H \ar[u]_-{\partial\otimes\id}.}
$$
Since the ring homomorphism $(\iota\otimes\id)\circ\partial$ gives the $K$-algebra structure of $R\otimes_{\ka}^{\partial}H$ and the ring homomorphism $(\iota\otimes\id\otimes\id)\circ(\partial\otimes\id)\circ\partial$ gives the $K$-algebra structure on $(R\otimes_{\ka}^{\partial}H)\otimes_{\ka}^{\partial} H$, the result follows.
\end{proof}
\begin{definition}\label{defc}
Let $V$ be an affine $K$-scheme. We define the following scheme morphism (natural in $V$)
$$c_V:\nabla(V)\ra \nabla\left(\nabla(V)\right)$$
using for any $K$-algebra $R$ the following map (well-defined by Lemma \ref{doubletwist}):
$$c_V:=V\left(\id\otimes\mu\right):V\left(R\otimes_{\ka}^{\partial} H\right)\ra V\left(\left(R\otimes_{\ka}^{\partial} H\right)\otimes_{\ka}^{\partial} H\right).$$
\end{definition}
\begin{remark}\label{cfor}
\begin{enumerate}
\item In the set-up of Remark \ref{easy}(2), the morphism $c_V$ above is given on rational points by the following formula:
$$c_V(x_0,\ldots,x_{e-1})=\left(\sum_{l=0}^{e-1}c^{i,j}_l x_l\right)_{i,j<e}.$$

\item In terms of the ``constant'' generalized Hasse-Schmidt system $\underline{\mathcal{D}}$ from \cite{MS1} (see the last paragraph of Section \ref{secdefex}), the morphism $c_V$ above coincides with the morphism $\hat{\Delta}$ from \cite[Remark 2.18]{MS}.
\end{enumerate}
\end{remark}
\begin{lemma}\label{l2}
We have the following:
$$\partial_{\nabla(V)}\circ \partial_V = c_V \circ \partial_V.$$
\end{lemma}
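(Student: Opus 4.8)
The plan is to verify the identity $\partial_{\nabla(V)}\circ\partial_V = c_V\circ\partial_V$ functorially, by unwinding all three maps on $R$-rational points for a $\fg$-ring extension $(R,\partial')$ of $(K,\partial)$ and reducing everything to the coassociativity square for $\partial$ from Remark~\ref{remiter}(1). By naturality in $V$ and the standard fact that a $K$-variety embeds in some $\Aa^n_K$, it suffices to treat $V = \Aa^n$, and in fact (since all the maps involved act coordinate-wise and are natural in $V$) it suffices to treat $V = \Aa^1$, i.e.\ to check the equality of the two composite ring-theoretic operations $R \to R\otimes_{\ka}^{\partial}H\otimes_{\ka}^{\partial}H$ applied to a single element $r\in R$.

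First I would identify the three maps concretely. By Definition~\ref{defpv}, $\partial'_V = V(\partial')$, so $\partial_V'(r) = \partial'(r) \in R\otimes_{\ka}^{\partial}H = \nabla(\Aa^1)(R)$. Next, $\partial_{\nabla(V)}$ applied to a point of $\nabla(V)(R)$ is, again by Definition~\ref{defpv}, the map induced by $\partial'$ on $\nabla(V)$; since $\nabla(\Aa^1) = \Aa^e$ and $\partial'$ acts coordinate-wise, $\partial_{\nabla(V)}$ is just $\partial'\otimes\id_H$ followed by the identification $\nabla(\nabla(\Aa^1))(R) = (R\otimes_{\ka}^{\partial}H)\otimes_{\ka}^{\partial}H$. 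So the left-hand side sends $r$ to $(\partial'\otimes\id_H)(\partial'(r))$. On the other hand, $c_V = V(\id\otimes\mu)$ by Definition~\ref{defc}, so the right-hand side sends $r$ to $(\id_R\otimes\mu)(\partial'(r))$. Thus the claimed identity becomes the single equation
$$(\partial'\otimes\id_H)\circ\partial' \;=\; (\id_R\otimes\mu)\circ\partial' \;:\; R \longrightarrow R\otimes_{\ka}H\otimes_{\ka}H,$$
which is exactly the coassociativity diagram for the $\fg$-ring $(R,\partial')$ from Remark~\ref{remiter}(1). That diagram holds because $(R,\partial')$ is a $\fg$-ring (being a $\fg$-ring extension of $(K,\partial)$), so the equation is immediate.

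The one genuinely delicate point — and the step I expect to require the most care — is bookkeeping of the $K$-algebra structures: the target of $c_V$ and the target of $\partial_{\nabla(V)}$ must be recognized as the \emph{same} object $(R\otimes_{\ka}^{\partial}H)\otimes_{\ka}^{\partial}H$, with the same twisted $K$-algebra structure, so that the two sides are literally equal maps of $K$-algebras and not merely equal as maps of $\ka$-modules. This is precisely what Lemma~\ref{doubletwist} guarantees for the $c_V$ side (it shows $\id\otimes\mu$ lands in the doubly-$\partial$-twisted algebra $K$-linearly), and the analogous statement for $\partial_{\nabla(V)} = \partial'\otimes\id_H$ follows from Remark~\ref{remfunc} applied to the $\fg$-ring $R\otimes_{\ka}^{\partial}H$ — or one can simply cite Remark~\ref{cfor}(1) together with Remark~\ref{easy}(2) and note that on coordinates both sides send $(x_l)_{l<e}$-data built from $r$ to the tuple $\bigl(\sum_l c^{i,j}_l\,\partial'_l(r)\bigr)_{i,j<e}$, where one uses the structure-constant form of coassociativity $\partial'_i\partial'_j(r) = \sum_l c^{i,j}_l\partial'_l(r)$ recorded in Section~\ref{secfo}. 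Once the structures are matched, no further computation is needed.
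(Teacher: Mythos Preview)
Your proof is correct and follows essentially the same approach as the paper: both reduce the identity to the coassociativity square for the $\fg$-ring $(R,\partial')$ from Remark~\ref{remiter}(1). The paper's version is slightly slicker --- it simply applies the functor $V$ (as functor of points) to that square directly, without first reducing to $V=\Aa^1$ or passing to coordinates --- but the underlying idea is identical.
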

\begin{proof}
By Definitions \ref{defpv} and \ref{defc}, we have:
$$\partial_V=V(\partial),\ \ \ \ \ c_V:=V\left(\id\otimes\mu\right).$$
Therefore, the statement we are showing follows from applying $V$ (regarded as the functor of rational points) to the commutative diagram from Remark \ref{remiter}(1).
\end{proof}
By the definition of $\nabla(V)$, the morphisms $V\to \nabla(V)$ correspond to the $K$-algebra maps $K[V]\to K[V]\otimes_{\ka}^{\partial} H$. The following result explains when such morphisms correspond to $\fg$-ring structures on $K[V]$ extending $\partial$ on $K$ (equivalently: $\fg$-actions on $V$).
\begin{lemma}\label{iterdual}
Let us fix a morphism $\varphi:V\to \nabla(V)$. Then, the following are equivalent.
\begin{enumerate}
\item  The corresponding map $K[V]\to K[V]\otimes_{\ka}^{\partial} H$ makes $K[V]$ a $\fg$-ring extension of $K$.

\item We have $\pi_V\circ \varphi=\id_V$ and the following diagram is commutative:
$$
\xymatrixcolsep{4pc}
\xymatrix{\nabla(V) \ar[r]^-{c_V}& \nabla(\nabla(V)) \\
V \ar[u]^-{\varphi} \ar[r]^-{\varphi} & \nabla(V) \ar[u]_-{\nabla(\varphi)}.}
$$
\end{enumerate}
\end{lemma}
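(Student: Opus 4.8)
The plan is to dualise and work entirely on coordinate rings, so that the two halves of clause $(2)$ become, respectively, the counit condition and the coassociativity condition of Remark \ref{remiter}(1) for the $\ka$-algebra $K[V]$.

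First I would fix the dictionary. Write $\xi\in V(K[V])$ for the ``generic point'', i.e.\ the identity $\id_{K[V]}$ regarded as a $K[V]$-point of $V$. By the Yoneda lemma a morphism $\varphi\colon V\to\nabla(V)$ is the same datum as the point $\varphi(\xi)\in\nabla(V)(K[V])=V(K[V]\ot_{\ka}^{\partial}H)$, i.e.\ as a $K$-algebra map $\partial'\colon K[V]\to K[V]\ot_{\ka}^{\partial}H$; being $K$-linear for the $\partial$-twisted structure on the target, such a $\partial'$ automatically restricts to the coaction $\partial$ on $K$. By Remark \ref{remiter}(1) applied with $R=K[V]$ (see also Remark \ref{remfunc}), clause $(1)$ says exactly that this $\partial'$ moreover satisfies $(\id\ot\pi)\circ\partial'=\id_{K[V]}$ (counit) and $(\id\ot\mu)\circ\partial'=(\partial'\ot\id)\circ\partial'$ (coassociativity). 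So it suffices to match the two halves of $(2)$ with these two equations.

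For the first half, $\pi_V$ of Definition \ref{defprol}(2) is $V$ applied to the $K$-algebra map $K[V]\ot_{\ka}^{\partial}H\to K[V]$ induced by the counit $\pi\colon H\to\ka$, so the point $\pi_V(\varphi(\xi))$ is the composite $(\id\ot\pi)\circ\partial'$ (compare Remark \ref{easy}(2), where $\pi_V$ is projection onto the first coordinate block); hence $\pi_V\circ\varphi=\id_V$ iff $(\id\ot\pi)\circ\partial'=\id_{K[V]}$. For the second half I would evaluate the two composites $V\to\nabla(\nabla(V))$ of $(2)$ at $\xi$ and compare the resulting points of $\nabla(\nabla(V))(K[V])=V((K[V]\ot_{\ka}^{\partial}H)\ot_{\ka}^{\partial}H)$. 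Since $c_V=V(\id\ot\mu)$ (Definition \ref{defc}, well-posed by Lemma \ref{doubletwist}), we get $c_V(\varphi(\xi))=(\id\ot\mu)\circ\partial'$. On the other side, $\nabla(\varphi)$ on $R$-points is simply $\varphi_{R\ot_{\ka}^{\partial}H}$, so $\nabla(\varphi)(\varphi(\xi))=\varphi_{K[V]\ot_{\ka}^{\partial}H}(\partial')$; by naturality of $\varphi$ along the $K$-algebra map $\partial'\colon K[V]\to K[V]\ot_{\ka}^{\partial}H$ evaluated at $\xi$, together with the fact that $\nabla(V)(\partial')$ is postcomposition with $\partial'\ot\id_H$, this equals $(\partial'\ot\id)\circ\partial'$. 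Therefore the square in $(2)$ commutes iff $(\id\ot\mu)\circ\partial'=(\partial'\ot\id)\circ\partial'$, and together with the first half this yields $(2)\Leftrightarrow(1)$.

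The step I expect to be the main obstacle is the second half above: one has to keep straight the several $\partial$-twisted $K$-algebra structures --- on $K[V]\ot_{\ka}^{\partial}H$ and on its double twist $(K[V]\ot_{\ka}^{\partial}H)\ot_{\ka}^{\partial}H$ --- and check that $\id\ot\mu$, the map $\partial'\ot\id$, and the counit-induced map are genuinely $K$-algebra homomorphisms for those structures (this is precisely where Lemma \ref{doubletwist} and the Hopf-algebra axioms are used), as well as identify how the functor $\nabla(-)$ acts on the morphism $\partial'$. Once these compatibilities are recorded, the equivalence is a direct transcription of Remark \ref{remiter}(1) for the ring $K[V]$.
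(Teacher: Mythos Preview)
Your proposal is correct and follows exactly the route the paper takes: the paper's proof is the single sentence ``It follows directly from the adjointness property defining the functor $\nabla$ and from Remark \ref{remiter}(1)'', and you have carefully unpacked that sentence, translating $\pi_V\circ\varphi=\id_V$ into the counit condition and the square into coassociativity via Yoneda and the adjunction of Remark \ref{itobs}(2). Your computation of $\nabla(\varphi)\circ\varphi$ at the generic point as $(\partial'\otimes\id)\circ\partial'$ is the only place requiring care, and your naturality argument handles it correctly.
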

\begin{proof}
It follows directly from the adjointness property defining the functor $\nabla$ and from Remark \ref{remiter}(1).
\end{proof}
\begin{example}\label{standard}
By Lemma \ref{iterdual}, we obtain that the morphism
$$c_V:\nabla(V)\ra \nabla(\nabla(V))$$
gives the ring $K[\nabla(V)]$ a canonical $\fg$-ring structure $\widetilde{\partial}=(\widetilde{\partial}_0,\ldots,\widetilde{\partial}_{e-1})$. This canonical structure is dual (or rather ``adjoint'') to the one considered in \cite[Proposition 3.4]{HK1}.

Let us give an explicit description of this structure for $V=\Aa^1$, so
$$K[\nabla(V)]=K[X_0,\ldots,X_{e-1}]=:K[X_{<e}].$$
For any $K$-algebra $R$, we have (see Lemma \ref{doubletwist}):
$$\nabla(\Aa^1)(R)=R\otimes^{\partial}_{\ka}H,\ \ \ \ \ \nabla(\nabla(\Aa^1))(R)=\left(R\otimes^{\partial}_{\ka}H\right)\otimes^{\partial}_{\ka}H,$$
$$c_{\Aa^1}=\id\otimes \mu:R\otimes^{\partial}_{\ka}H\ra \left(R\otimes^{\partial}_{\ka}H\right)\otimes^{\partial}_{\ka}H.$$
Since for any $n>0$, we have that $\nabla(\Aa^n)=\Aa^{ne}$, by Remark \ref{cfor} we get the following interpretation (using the fixed basis $b_0,\ldots,b_{e-1}$ of $H$ over $\ka$):
$$c_{\Aa^1}:\nabla(\Aa^1)(R)=R^e\ra \nabla(\nabla(\Aa^1))(R)=R^{e^2},\ \ \ \ \ c_{\Aa^1}(r_0,\ldots,r_{e-1})=\left(\sum_{l=0}^{e-1}c^{i,j}_lr_l\right)_{i,j<e}.$$
We get the corresponding map on the coordinate rings:
$$c_{\Aa^1}^*:K\left[X_i^{(j)}\ |\ i,j<e\right]\ra K[X_{<e}],\ \ \ \ c_{\Aa^1}^*\left(X_i^{(j)}\right)=\sum_{l=0}^{e-1}c^{i,j}_lX_l.$$
The adjoint map to this last map above gives the canonical $\fg$-ring structure on $K[\nabla(\Aa^1)]$:
$$\widetilde{\partial}=\left(c_{\Aa^1}^*\right)^{\dagger}:K[X_{<e}]\ra K[X_{<e}]\otimes^{\partial}_{\ka}H,\ \ \ \ \widetilde{\partial}(X_j)=\sum_{j=0}^{e-1}\left(\sum_{l=0}^{e-1}c^{i,j}_lX_l\right)\otimes b_j$$
In terms of the operators $\widetilde{\partial}_0,\ldots,\widetilde{\partial}_{e-1}$, we get:
$$\widetilde{\partial}_i(X_j)=\sum_{l=0}^{e-1}c^{i,j}_lX_l.$$
We consider below several explicit examples.
\begin{enumerate}
\item Let us assume that $\mathrm{char}(\ka)=2$ and
$$\fg=\ga[1]=\spec(\ka[v]),\ \ \ \ \mu(v)=v\otimes 1+ 1\otimes v,\ \ \ \ \ b_0=1,b_1=v;$$
where $\ka[v]:=\ka[X]/(X^2)$. In this case, we have:
\begin{IEEEeqnarray*}{rCl}
\widetilde{\partial}(X_0) &=& X_0\otimes b_0 + X_1\otimes b_1,\\
\widetilde{\partial}(X_1) &=& X_1\otimes b_0,
\end{IEEEeqnarray*}
and we see that:
$$\widetilde{\partial}_1(X_0)=X_1,\ \ \ \ \widetilde{\partial}_1(X_1)=0.$$

\item We still assume that $\ch(\ka)=2$, the ring $\ka[v]$ is as above, and we take:
$$\fg=\gm[1]=\spec(\ka[v]),\ \ \ \ \mu(v)=v\otimes 1+ 1\otimes v+v\otimes v,\ \ \ \ \ b_0=1,b_1=v.$$
We obtain the following:
$$\widetilde{\partial}_1(X_0)=X_1,\ \ \ \ \widetilde{\partial}_1(X_1)=X_1.$$

\item If $G=\{g_0,\ldots,g_{e-1}\}$ is a finite group and $\fg=G_{\ka}$, then we obtain a canonical $G$-action on $K[X_{<e}]$ such that for all $i,j,k<e$, we have:
$$g_i\cdot X_j=X_k$$
if and only if $g_ig_j=g_k$.
\end{enumerate}
\end{example}
\begin{remark}\label{last}
We would like to say few words about an explicit construction of the $\partial$-prolongation functor understood as the left-adjoint functor from Remark \ref{itobs}(2). If $R=K[\overline{X}]/I$ is an arbitrary $K$-algebra, then there is a ``non-iterative $H$-operator $\overline{\partial}$ from $K[\overline{X}]$ to $K\{\overline{X}\}:=K[\overline{X},\overline{X}_1,\ldots,\overline{X}_{e-1}]$'' defined by $\overline{\partial}_i(\overline{X}):=\overline{X}_i$, and the $\partial$-prolongation of $R$ coincides with $K\{\overline{X}\}/(\overline{\partial}(I))$.

In our ``iterative'' situation, the operator $\overline{\partial}$ extends to the $\fg$-ring structure $\widetilde{\partial}$ as explained in Example \ref{standard}. Then, we have that $\overline{\partial}(I)=\widetilde{\partial}(I)$ is an ``$\fg$-ideal'', hence the quotient $K\{\overline{X}\}/(\overline{\partial}(I))$ gets a natural $\fg$-ring structure, which coincides with the one described in  Example \ref{standard}.
\end{remark}

\section{Model companion}\label{secexist}

In this section, we prove the existence of a model companion of the theory of $\fg$-fields for an arbitrary finite group scheme $\fg$ over the field $\ka$.
The proofs in this section follow the lines of the proofs from \cite{K3} and \cite{HK}, however, we made some improvements here in order to make these proofs simpler and ``coordinate-free''.

For the next result, we need to introduce the notion of constants of a $\fg$-ring (these are the same constants as in \cite[Definition 2.2(3)]{BHKK}).
\begin{definition}\label{defcon}
Let $(R,\partial)$ be a $\mathfrak{g}$-ring. We denote by $R^{\fg}$ the \emph{ring of constants} of $(R,\partial)$, that is:
$$R^{\fg}:=\{x\in R\ |\ \partial(x)=x\},$$
where $\partial$ is understood as a map $R\to R\otimes_{\ka}H$ and $R$ is considered as a subring of $R\otimes_{\ka}H$.
\end{definition}
\begin{remark}\label{integral}
We note here several properties of the ring extension $R^{\fg}\subseteq R$.
\begin{enumerate}
\item By \cite[Theorem 4.2.1]{Montgomery}, the ring extension $R^{\fg}\subseteq R$ is integral (see also the proof of \cite[Section 12, Theorem 1]{MuAbel}).

\item The extension $R^{\fg}\subseteq R$ need not be finite, that is $R$ is not necessarily a finitely generated $R^{\fg}$-module (by Item $(1)$ above, it is equivalent to saying that $R$ is not necessarily a finitely generated $R^{\fg}$-algebra). Finite generation fails in the simplest non-trivial case, that is for the constant group $\fg=(\Zz/2\Zz)_{\ka}$ as demonstrated in \cite[Example 5.5]{montgomeryfixed} (a Noetherian domain $R$  with an action of  $G=\Zz/2\Zz$ such that $R^G$ is not Noetherian and $R$ is not a finite $R^G$-module).

    Finite generation holds in the case when $R$ is a finitely generated $\ka$-algebra (it is quite easy to show knowing that the ring extension $R^{\fg}\subseteq R$ is integral, see e.g. \cite[Lemma 10, page 49]{serre1988algebraic}), but this is not a good assumption for us.

\item We will show in Section \ref{secprop} (Theorem \ref{fgfinite}) that if $K$ is a field, then the field extension $K^{\fg}\subseteq K$ \emph{is} finite and of degree bounded by $e$, as in the case of group actions. It is hard to believe that this result is new, but we were unable to find it in the literature. The closest result we could find is \cite[Theorem 8.3.7]{Montgomery}, but there is an extra assumption there saying that the extension $K^{\fg}\subseteq K$ should be \emph{Hopf Galois} (see \cite[Definition 8.1.1]{Montgomery}) and, as \cite[Example 8.1.3]{Montgomery} shows, the field extension $K^{\fg}\subseteq K$ need not be Hopf Galois in general.
\end{enumerate}
\end{remark}
\begin{lemma}\label{tenfra}
Suppose that $R$ is a $\mathfrak{g}$-ring and $R\to S,R\to T$ are homomorphisms of $\mathfrak{g}$-rings. Then, we have the following.
\begin{enumerate}
\item There is a unique $\mathfrak{g}$-ring structure on $S\otimes_RT$ such that the natural maps $S\to S\otimes_RT,T\to S\otimes_RT$ are homomorphisms of $\mathfrak{g}$-rings.

\item If $R$ is a domain, then the $\mathfrak{g}$-ring structure on $R$ extends uniquely to its field of fractions, which we denote by $\mathrm{Frac}(R)$.
\end{enumerate}
\end{lemma}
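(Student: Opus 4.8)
The plan is to describe both $\fg$-ring structures via the operator-theoretic reformulation from Remark \ref{remiter}(1): a $\fg$-ring structure on a $\ka$-algebra $A$ is the same as a $\ka$-algebra map $\partial_A\colon A\to A\otimes_\ka H$ satisfying the counit and coassociativity conditions. For part (1), I would first build the desired map $\partial_{S\otimes_R T}\colon S\otimes_R T\to (S\otimes_R T)\otimes_\ka H$ and then check it is well-defined, is a $\ka$-algebra map, and satisfies the two axioms. The natural source of this map is the universal property of the tensor product: the two composites
$$
S\xrightarrow{\ \partial_S\ } S\otimes_\ka H\longrightarrow (S\otimes_R T)\otimes_\ka H,\qquad
T\xrightarrow{\ \partial_T\ } T\otimes_\ka H\longrightarrow (S\otimes_R T)\otimes_\ka H
$$
are $\ka$-algebra maps, and they agree after restriction along $R\to S$ and $R\to T$ precisely because $R\to S$ and $R\to T$ are $\fg$-ring homomorphisms (both composites restrict to $R\xrightarrow{\partial_R} R\otimes_\ka H\to (S\otimes_R T)\otimes_\ka H$). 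Hence they glue to a unique $\ka$-algebra map $\partial_{S\otimes_R T}$ making $S\to S\otimes_R T$ and $T\to S\otimes_R T$ into maps commuting with the $\partial$'s; this simultaneously gives existence and the uniqueness clause, since such a map is forced on the generating set $S\cup T$.

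It then remains to verify the counit and coassociativity conditions for $\partial_{S\otimes_R T}$. Both are equalities of $\ka$-algebra maps out of $S\otimes_R T$, so by the universal property it suffices to check them after precomposition with $S\to S\otimes_R T$ and with $T\to S\otimes_R T$; there they reduce to the corresponding identities for $\partial_S$ and $\partial_T$, which hold by hypothesis. (Concretely: $\pi\circ\partial_{S\otimes_R T}$ and $\id$ agree on $S$ and on $T$, hence everywhere; and the two composites in the coassociativity square agree on $S$ and on $T$ by coassociativity for $S$ and $T$, hence everywhere.) This establishes part (1).

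For part (2), suppose $R$ is a domain and write $L=\mathrm{Frac}(R)$. Uniqueness is immediate: if $\partial_L$ exists, then for $a,b\in R$ with $b\neq 0$ we have $\partial_L(a)=\partial_L(b)\partial_L(a/b)$ in $L\otimes_\ka H$, and $\partial_L(b)=\partial_R(b)$ is a unit in $L\otimes_\ka H$, so $\partial_L(a/b)$ is determined. The point for existence is exactly that $\partial_R(b)$ is invertible in $L\otimes_\ka H$ for every nonzero $b\in R$: granting this, one defines $\partial_L(a/b):=\partial_R(a)\cdot\partial_R(b)^{-1}$, checks (routinely) that this is independent of the representation $a/b$ and is a $\ka$-algebra homomorphism extending $\partial_R$, and then the counit and coassociativity conditions pass from $R$ to $L$ because they are polynomial identities that hold on the generating subring $R$ and both sides are ring homomorphisms (equivalently, apply part (1)-style reasoning, or note $L\otimes_\ka H$ is a localization-type object so the identities extend).

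The main obstacle is therefore the invertibility claim in part (2): that $\partial_R(b)\in R\otimes_\ka H$ lies in the group of units of $L\otimes_\ka H$ for $0\neq b\in R$. The clean way to see this uses Remark \ref{integral}(1): the extension $R^\fg\subseteq R$ is integral, so $b$ satisfies a monic polynomial over $R^\fg$; taking the product of its ``conjugates'' (or using that $N(b):=\prod$ over a suitable finite set built from $\partial$ lands in $R^\fg$) produces a nonzero element $c\in R^\fg$ with $c\in bR$, hence $b\mid c$ in $R$ and it suffices to invert the central element $\partial_R(c)=c\otimes b_0 = c\cdot 1$. But $c\in R^\fg\setminus\{0\}\subseteq L^\fg\setminus\{0\}$ is a unit of $L$, so $c\cdot 1$ is a unit of $L\otimes_\ka H$, and consequently $\partial_R(b)$ divides a unit and is itself a unit. (Alternatively, one argues directly that $\partial_R(b)$ is a non-zero-divisor in the finite free $L$-module $L\otimes_\ka H$ and that a non-zero-divisor whose "norm" — the determinant of multiplication — is a unit is invertible; reducing to the norm again brings in the constants.) I would present the integrality argument, as it is the shortest and reuses Remark \ref{integral}(1) verbatim.
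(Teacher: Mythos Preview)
Your argument is correct. For part~(1) you unwind explicitly, via the universal property of $S\otimes_R T$, what the paper states as a one-line categorical fact (a group object acting on two objects acts diagonally on their product); these are the same proof at different levels of abstraction.

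For part~(2) you take a genuinely different route. The paper does not verify invertibility of $\partial_R(b)$ at all: instead it observes that integrality of $R^{\fg}\subseteq R$ gives
\[
\mathrm{Frac}(R)\;\cong\;\mathrm{Frac}\!\left(R^{\fg}\right)\otimes_{R^{\fg}} R
\]
(as $R$-algebras), and then simply applies part~(1) with the trivial $\fg$-structure on $\mathrm{Frac}(R^{\fg})$. Your approach uses integrality in a more elementary way---producing a nonzero $c\in R^{\fg}$ divisible by $b$, so that $\partial_R(b)$ divides the unit $c\otimes 1$ in $L\otimes_\ka H$---and then builds $\partial_L$ by hand via the quotient formula. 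The paper's argument is shorter and more structural (it recycles part~(1) and avoids any elementwise computation), while yours has the virtue of making the extension formula $\partial_L(a/b)=\partial_R(a)\partial_R(b)^{-1}$ explicit and self-contained; both hinge on Remark~\ref{integral}(1).
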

\begin{proof}
Item $(1)$ is an instance of the following very general fact: if a group $G$ in a category acts on the objects $X$ and $Y$, then there is a unique $G$-action on $X\times Y$ such that the projection maps $X\times Y\to X,X\times Y\to Y$ commute with the action of $G$.

For Item $(2)$, we notice first that by Remark \ref{integral}(1) the extension of domains $R^{\fg}\subseteq R$ is integral, which implies that:
$$\mathrm{Frac}\left(R^{\fg}\right)[R]=\mathrm{Frac}(R).$$
Since tensor products commute with localizations, we get that:
$$\mathrm{Frac}\left(R^{\fg}\right)\otimes_{R^{\fg}}R\cong_R\mathrm{Frac}(R).$$
Therefore, using Item $(1)$, we obtain that there is an $\fg$-action on $\mathrm{Frac}(R)$ extending uniquely the trivial action on $\mathrm{Frac}\left(R^{\fg}\right)$ and the given action on $R$.
 \end{proof}
 \begin{remark}
 We do not know whether Lemma \ref{tenfra} holds without the finiteness assumption on the group scheme.
 \end{remark}
For the rest of this section we fix a $\fg$-field $K$ and a $K$-variety $V$. The following lemma is an extended version of \cite[Lemma 1.1(ii)]{K3}.
\begin{prop}\label{mainprop}
Assume that $W$ is a $K$-variety such that $W\subseteq \nabla(V)$ and $c_V(W)\subseteq \nabla(W)$. Then, we have the following.
\begin{enumerate}
\item There is a $\fg$-field structure $\partial'$ on $K(W)$ such that $K\subseteq K(W)$ is a $\fg$-field extension.

\item Let $b:W\to V$ denote the composition of the inclusion $W\subseteq \nabla(V)$ and the projection morphism $\pi_V:\nabla(V)\to V$. We regard $b$ as an element of $V(K(W))$. Then, we have:
    $$\partial'_V(b)\in W(K(W)),$$
    where $\partial'$ comes from Item $(1)$ above.
\end{enumerate}
\end{prop}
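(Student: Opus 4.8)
The plan is to exploit the correspondence, established in Lemma~\ref{iterdual}, between morphisms $\varphi\colon W\to\nabla(W)$ satisfying the two compatibility conditions (a section of $\pi_W$ and compatibility with $c_W$) and $\fg$-ring structures on $K[W]$ extending $\partial$ on $K$. So the first step is to produce such a $\varphi$. The natural candidate is $\varphi:=c_V|_W$: by hypothesis $c_V(W)\subseteq\nabla(W)$, so $c_V$ restricts to a morphism $W\to\nabla(W)$. I would then check the two conditions of Lemma~\ref{iterdual}(2) for this $\varphi$. The section condition $\pi_W\circ\varphi=\id_W$ should follow from the counit axiom: $\pi_W$ corresponds to the counit $\mu$-via-$\pi$ contraction, and $\pi_V\circ c_V=\id_{\nabla(V)}$ is exactly the counit identity $(\id\otimes\pi)\circ\mu=\id$ applied inside the $\partial$-prolongation; restricting to $W$ and using that $\pi_W$ is the restriction of $\pi_{\nabla(V)}$ (up to the identification $\nabla(W)\subseteq\nabla(\nabla(V))$) gives the claim. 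The coassociativity square — that $\nabla(\varphi)\circ\varphi=c_W\circ\varphi$ as maps $W\to\nabla(\nabla(W))$ — should follow from the coassociativity of $\mu$, i.e. $(\id\otimes\mu)\circ\mu=(\mu\otimes\id)\circ\mu$, which is precisely the commuting diagram in Remark~\ref{remiter}(1) that was already used to prove Lemma~\ref{doubletwist} and Lemma~\ref{l2}; here one applies the functor $V(-)$ to that diagram and restricts everything to $W$. This yields a $\fg$-ring structure $\widetilde{\partial}$ on $K[W]$ extending $\partial$. Since $W$ is a $K$-variety (hence $K[W]$ is a domain), Lemma~\ref{tenfra}(2) extends $\widetilde{\partial}$ uniquely to a $\fg$-field structure $\partial'$ on $K(W)=\mathrm{Frac}(K[W])$, and by construction $K\subseteq K(W)$ is a $\fg$-field extension. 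This proves part~(1).

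For part~(2), I would unwind the definitions using the concrete coordinate description of Remark~\ref{easy}(2). Write $W\subseteq\nabla(V)\subseteq\Aa^{ne}$ and let $b=\pi_V|_W\in V(K(W))$ be the image of the generic point; in coordinates, if the generic point of $W$ is $(x_0,\dots,x_{e-1})$ with $x_i\in K(W)^n$, then $b=x_0$. The operator $\partial'$ on $K(W)=K[W]$ is the one dual to $\varphi=c_V|_W$, so by Remark~\ref{easy}(2) and the formula for $c_V$ in Remark~\ref{cfor}(1), the element $\partial'_V(b)\in\nabla(V)(K(W))$ is computed by applying $V(\partial')$ to $b=x_0$. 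I claim this equals the generic point $(x_0,\dots,x_{e-1})$ of $W$ itself: indeed $\partial'$ was defined so that the composite $K[V]\xrightarrow{\pi_V^*}K[W]\xrightarrow{\partial'}K[W]\otimes^{\partial}_\ka H$ is dual to $\varphi\circ(\text{inclusion})$, which is literally $c_V$ precomposed with $W\hookrightarrow\nabla(V)$; dualizing back, $\partial'_V(b)$ is the tautological point of $\nabla(V)$ corresponding to the inclusion $W\hookrightarrow\nabla(V)$, i.e. the generic point of $W$. Hence $\partial'_V(b)\in W(K(W))$.

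The main obstacle I expect is bookkeeping rather than conceptual: one must carefully track the several identifications in play — the identification $\nabla(W)\subseteq\nabla(\nabla(V))$ induced by $W\hookrightarrow\nabla(V)$, the identification of $\pi_W$ with a restriction of $\pi_{\nabla(V)}$, and the adjointness dictionary of Lemma~\ref{iterdual} that translates "morphism $\varphi$ with the two properties" into "$\fg$-ring structure on $K[W]$" — and verify that under all of them the hypotheses $W\subseteq\nabla(V)$ and $c_V(W)\subseteq\nabla(W)$ feed correctly into the counit and coassociativity diagrams. A secondary subtlety is making precise, in the proof of (2), the sense in which $\partial'_V(b)$ is "the tautological point of $W$"; the cleanest route is probably to phrase it scheme-theoretically as: the morphism $b\colon W\to V$ pulls back along $\partial'_V$ to the inclusion $W\hookrightarrow\nabla(V)$, which lands in $W$ by construction. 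Once the identifications are pinned down, everything reduces to the two diagrams already available from Remark~\ref{remiter}(1) and the earlier lemmas.
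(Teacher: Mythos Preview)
Your proposal is correct and follows essentially the same route as the paper: define $\varphi:=c_V|_W$, invoke Lemma~\ref{iterdual} to get the $\fg$-ring structure on $K[W]$, pass to fractions via Lemma~\ref{tenfra}(2), and then for part~(2) argue that $\partial'_V(b)$ corresponds to the inclusion $W\hookrightarrow\nabla(V)$. The only differences are cosmetic: the paper dispatches the verification of Lemma~\ref{iterdual}(2) for $\varphi$ by pointing to Example~\ref{standard} (where $c_V$ is already shown to give a $\fg$-ring structure on $K[\nabla(V)]$, so its restriction inherits the required properties), whereas you spell out the counit and coassociativity checks directly; and for part~(2) the paper identifies $\partial'_V(b)$ with the morphism $\nabla(b)\circ\varphi$ and uses a small diagram together with $\nabla(\pi_V)=\pi_{\nabla(V)}$ and $\pi_{\nabla(V)}\circ c_V=\id_{\nabla(V)}$, which is exactly the scheme-theoretic route you sketch at the end of your discussion.
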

\begin{proof}
For the proof of Item $(1)$, we consider the morphism:
$$\varphi:=c_V|_W:W\ra \nabla(W).$$
By Example \ref{standard}, the morphism $\varphi$ satisfies the second condition from Lemma \ref{iterdual}, hence $K\subseteq K[W]$ has a natural structure of a $\fg$-ring extension. By Lemma \ref{tenfra}, the $\fg$-ring structure on $K[W]$ extends to a $\fg$-field structure, which we denote by $\partial'$, on $K(W)$.

For Item $(2)$, we notice first that the rational point $\partial'_V(b)\in \nabla(V)(K(W))$ corresponds to the following morphism:
$$\nabla(b)\circ \varphi:W\ra \nabla(V).$$
It is enough to show that this morphism coincides with the inclusion $W\subseteq \nabla(V)$, which follows from the commutativity of the following diagram:
$$
\xymatrixcolsep{4pc}
\xymatrix{W \ar[r]^-{\subseteq} \ar[d]_-{\varphi} & \nabla(V) \ar[d]^-{c_V} \\
\nabla(W) \ar[r]^-{\subseteq} \ar[rd]_-{\nabla(b)} & \nabla(\nabla(V)) \ar[d]^-{\nabla(\pi_V)}\\
 & \nabla(V)}
$$
and the facts that $\nabla(\pi_V)=\pi_{\nabla(V)}$ and $\pi_{\nabla(V)}\circ c_V=\id_{\nabla(V)}$ (since $c_V$ satisfies the counit condition from Remark \ref{remiter}(1)).
\end{proof}
We can formulate now:
\smallskip
\\
\textbf{Geometric axioms for $\mathfrak{g}$-DCF}
\\
For each positive integer $n$, suppose that $V \subseteq \Aa^n$ and $W \subseteq \nabla(V)$ are $K$-varieties. If $c_{V}(W)\subseteq \nabla(W)$, then there is $a\in  V(K)$ such that $\partial_{V}(a)\in W(K)$.
\smallskip
\\
It is standard to see that these axioms are first-order, see e.g. the discussion in \cite[Remark 2.7]{HK3}.
\begin{theorem}\label{gdcfexists}
The $\mathfrak{g}$-field $(K,\partial)$ is existentially closed if and only if $(K,\partial)$
 satisfies the geometric axioms above.
\end{theorem}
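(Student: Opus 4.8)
The plan is to prove both implications of this equivalence, using Proposition \ref{mainprop} as the main engine for the harder direction. The statement asserts that for a $\fg$-field $(K,\partial)$, being existentially closed in the class of $\fg$-fields is the same as satisfying the geometric axioms. I would organize the argument as two lemmas: one showing the axioms are \emph{necessary} for existential closedness, and one showing they are \emph{sufficient}.

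For the direction ``existentially closed $\Rightarrow$ geometric axioms'', suppose $(K,\partial)$ is existentially closed and let $V\subseteq \Aa^n$, $W\subseteq \nabla(V)$ be $K$-varieties with $c_V(W)\subseteq \nabla(W)$. By Proposition \ref{mainprop}(1), $K(W)$ carries a $\fg$-field structure $\partial'$ extending $\partial$ on $K$, and by Proposition \ref{mainprop}(2) the ``generic point'' $b\in V(K(W))$ satisfies $\partial'_V(b)\in W(K(W))$. Using Remark \ref{easy}(2), the condition $\partial'_V(b)\in W$ is expressible by a finite conjunction of polynomial equations over $K$ in the unknown $b$ together with the derived coordinates $\partial'_1(b),\ldots,\partial'_{e-1}(b)$ — i.e. the existential $L_{\fg}$-sentence ``$\exists x\ \big(x\in V \wedge \partial_V(x)\in W\big)$'' holds in the extension $(K(W),\partial')$. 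Since $(K,\partial)$ is existentially closed and $(K,\partial)\subseteq (K(W),\partial')$ is a $\fg$-field extension, this sentence already holds in $(K,\partial)$, giving the required $a\in V(K)$ with $\partial_V(a)\in W(K)$.

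For the converse ``geometric axioms $\Rightarrow$ existentially closed'', I would take a $\fg$-field extension $(K,\partial)\subseteq (L,\partial')$ and an existential $L_{\fg}$-formula with parameters in $K$ realized by some tuple in $L$; the goal is to realize it in $K$. The standard reduction is: such a solution in $L$ amounts to a point $d$ in some affine space over $L$ lying on a $K$-variety and satisfying the $\fg$-operator relations; letting $V$ be the $K$-locus of $\pi$ applied to $d$ (i.e. of the ``underlying'' tuple) and $W$ the $K$-locus of $\partial'_V(d)\in \nabla(V)(L)$, one checks $W\subseteq \nabla(V)$ and, crucially, that $c_V(W)\subseteq \nabla(W)$ — this last inclusion is exactly the trace on loci of the iterativity identity $\partial'_{\nabla(V)}\circ\partial'_V = c_V\circ\partial'_V$ from Lemma \ref{l2} applied to the point $d$ (the point $\partial'_V(d)$ of $\nabla(V)$ has $c_V$ sending it into $\nabla(W)$, since $\partial'_{\nabla(V)}$ of it lands in $\nabla(W)$ and $W=\loc_K(\partial'_V(d))$). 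Then the geometric axioms supply $a\in V(K)$ with $\partial_V(a)\in W(K)$, and unwinding $W=\loc_K(\partial'_V(d))$ shows $a$ (with its $\partial$-prolongation) satisfies every polynomial condition that $d$ did, hence realizes the formula in $K$. One must also handle quantifier-free formulas involving inequations, which is routine by passing to a principal localization, i.e. replacing $V$ by $V\setminus Z$ for the relevant closed set — or by the usual trick of adding an extra coordinate.

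**The main obstacle** is the bookkeeping in the converse direction: correctly identifying $V$ and $W$ as $K$-loci so that (a) the condition $c_V(W)\subseteq\nabla(W)$ genuinely follows from Lemma \ref{l2} and not merely for the specific point $d$ — one needs that $\loc_K$ is compatible with the morphisms $c_V$ and $\nabla(-)$, which requires these to be defined over $K$ and $\loc_K(c_V(d)) = c_V(\loc_K(d))$-type statements — and (b) that an honest existential formula (a Boolean combination of equations, possibly across several variables) reduces to the single ``$\exists x\in V,\ \partial_V(x)\in W$'' shape; the reduction to one variable uses that finitely many elements of $L$ generate a subring whose $\fg$-prolongation data is captured by a single affine variety. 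I expect that invoking the ``standard'' discussion referenced at \cite[Remark 2.7]{HK3} for first-orderness, together with Lemma \ref{l2} and Remark \ref{easy}(2), makes this go through cleanly, so the write-up can be kept short.
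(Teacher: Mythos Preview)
Your proposal is correct and follows essentially the same route as the paper: for the forward implication you invoke Proposition \ref{mainprop} to produce the $\fg$-field extension $K(W)$ and the generic point $b$ with $\partial'_V(b)\in W$, and for the converse you set $V=\loc_K(v)$, $W=\loc_K(\partial'_V(v))$ and use Lemma \ref{l2} to verify $c_V(W)\subseteq\nabla(W)$, handling inequations by the extra-variable trick. The only cosmetic difference is that the paper keeps the notation slightly cleaner by taking $v$ directly as the realization (rather than introducing a larger tuple $d$ and projecting), but the substance is identical.
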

\begin{proof}
For the left-to-right implication, let us assume that the $\mathfrak{g}$-field $K$ is existentially closed and take $V,W$ as in the assumptions of the geometric axioms for $\mathfrak{g}$-DCF. As in the statement of Proposition \ref{mainprop}, we regard the projection morphism $W\to V$ as a rational point $b\in V(K(W))$. By Proposition \ref{mainprop}, $(K,\partial)\subseteq (K(W),\partial')$ is a $\fg$-field extension and $\partial'_V(b)\in W(K(W))$. By Lemma \ref{easy} and our assumption saying that the $\fg$-field $(K,\partial)$ is existentially closed, we obtain that there is $a\in V(K)$ such that $\partial_V(a)\in W(K)$.

For the right-to-left implication, let us assume that $(K,\partial)$ satisfies the geometric axioms for $\mathfrak{g}$-DCF. We take a $\fg$-field extension $(K,\partial)\subseteq (L,\partial')$ and a quantifier-free $L_{\fg}$-formula $\phi(x)$ over $K$ having a realisation $v$ in $(L,\partial')$. By the usual trick of introducing extra variables (to get rid of any negations of equalities), we can assume that for
$$V:=\mathrm{locus}_K(v),\ \ \ \ \ \ W:=\mathrm{locus}_K(\partial_V'(v))$$
the formula $\phi(x)$ is implied by the formula saying that ``$x\in V$ and $\partial_V(x)\in W$'' (the latter is clearly a first-order formula by Remark \ref{easy}(2)). By Lemma \ref{l2}, we get that $c_V(W)\subseteq \nabla(W)$, thus by the geometric axioms for $\mathfrak{g}$-DCF, the formula $\phi(x)$ has a realisation in $(K,\partial)$.
\end{proof}

\begin{example}
The geometric axioms for $\fg$-DCF generalize both the axiomatizations given in \cite{HK} and in  \cite{HK3}. We explain below how the axioms for $\fg$-DCF specialize to these two cases.
\begin{enumerate}
\item In \cite{HK}, the case of a truncated (see Example \ref{ex1}(2)) finite group
scheme $\fg$ is considered. The axioms for the theory of existentially closed $\fg$-fields from \cite[Section 5.2]{HK} have basically the same form as the axioms above. However, the idea that similar axioms as in \cite[Section 5.2]{HK} may work for an arbitrary finite group scheme $\fg$ only came to us at the final moments of writing the article \cite{HK3} (see \cite[Remark 5.1]{HK3}).

\item In \cite{HK3}, we studied model theory of actions of finite groups on fields, which correspond to actions of finite constant group schemes (see Example \ref{ex2}(1)). The geometric version of the corresponding axioms is given in \cite[Remark 2.7(2)(b)]{HK3} and the comparison to axioms from Item $(1)$ above (so, also to the axioms from Theorem \ref{gdcfexists}) is discussed in \cite[Remark 2.7(4)]{HK3}.
\end{enumerate}
\end{example}

\section{Fields of constants and simplicity}\label{secprop}
The main result of this section is Theorem \ref{simple}, which says that the theory $\mathfrak{g}$-DCF (see Theorem \ref{gdcfexists}) is simple. This result follows from a Galois-theoretic analysis of the field of constants of an existentially closed $\fg$-field: we show that this field is pseudo-algebraically closed and bounded. We also show that any $\fg$-field is a finite extension of its field of constants (as a pure field).

Our first aim is to show the finite generation result mentioned above. The next lemma holds in any category in which quotients of group actions exist (as in the case of standard group actions). We recall that quotients exist in the category of finite group schemes (see e.g. \cite[Section 16.3]{Water}).
\begin{lemma}\label{brick}
Assume that $K$ is a $\fg$-field and $\mathfrak{n}$ is a normal subgroup scheme of $\fg$. Then, we have the following.
\begin{enumerate}
\item $K$ is an $\mathfrak{n}$-field and $K^{\mathfrak{n}}$ is a $\fg/\mathfrak{n}$-field.

\item The following two subfields of invariants coincide:
$$\left(K^{\mathfrak{n}}\right)^{\fg/\mathfrak{n}}=K^{\fg}.$$
\end{enumerate}
\end{lemma}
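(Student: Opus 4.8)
The plan is to unwind the definitions in terms of the comultiplication $\mu$ and the quotient map $q:\fg\to\fg/\mathfrak{n}$, which on the Hopf algebra side is a Hopf subalgebra inclusion $H/\!\!/\mathfrak{n}\hookrightarrow H$ (quotients of finite group schemes correspond to Hopf subalgebras, as recalled from \cite{Water}), and then to chase everything through functorially. For Item $(1)$: the $\fg$-action $\partial:K\to K\otimes_{\ka}H$ composes with $\id\otimes (\text{projection } H\to H_{\mathfrak n})$, where $H_{\mathfrak n}$ is the Hopf algebra of $\mathfrak n$, to give the restricted $\mathfrak n$-action on $K$; this is just functoriality of the prolongation/action set-up under the closed immersion $\mathfrak n\hookrightarrow\fg$. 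That $K^{\mathfrak n}$ is stable under $\partial$ — hence inherits a $\fg$-action, which then factors through $\fg/\mathfrak n$ because $\mathfrak n$ acts trivially on $K^{\mathfrak n}$ by definition of the quotient — is the content I would spell out: one checks that for $x\in K^{\mathfrak n}$ the element $\partial(x)\in K\otimes_{\ka}H$ in fact lies in $K^{\mathfrak n}\otimes_{\ka}H$ (equivalently in $K\otimes_{\ka}(H/\!\!/\mathfrak n)$ after identifying along the Hopf subalgebra), using the coassociativity diagram of Remark \ref{remiter}(1) together with normality of $\mathfrak n$. Concretely, normality says the composite $\fg\to\fg\times\fg\to\fg$ (conjugation) restricts to $\mathfrak n$, which dualizes to the statement that the image of $H_{\mathfrak n}$ under the relevant coaction map lands back in the right place; this is exactly what forces $\partial(K^{\mathfrak n})\subseteq K^{\mathfrak n}\otimes_{\ka} H$ and then, dualizing the quotient, that this $\fg$-coaction on $K^{\mathfrak n}$ is inflated from a $\fg/\mathfrak n$-coaction.

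For Item $(2)$ the inclusion $K^{\fg}\subseteq (K^{\mathfrak n})^{\fg/\mathfrak n}$ is essentially formal: an element fixed by all of $\fg$ is in particular fixed by $\mathfrak n$, so lies in $K^{\mathfrak n}$, and the $\fg/\mathfrak n$-action on $K^{\mathfrak n}$ being the one inflated from $\fg$ via $q$, being fixed by $\fg$ implies being fixed by $\fg/\mathfrak n$. For the reverse inclusion, take $x\in (K^{\mathfrak n})^{\fg/\mathfrak n}$; then $x\in K^{\mathfrak n}$, so $\partial(x)$ already lies in $K^{\mathfrak n}\otimes_{\ka}H$ and is actually pulled back from $K^{\mathfrak n}\otimes_{\ka}(H/\!\!/\mathfrak n)$ by Item $(1)$; the hypothesis that $x$ is $\fg/\mathfrak n$-invariant says precisely that this element of $K^{\mathfrak n}\otimes_{\ka}(H/\!\!/\mathfrak n)$ equals $x\otimes 1$, and since $H/\!\!/\mathfrak n\hookrightarrow H$ is injective, $\partial(x)=x\otimes 1$ in $K\otimes_{\ka}H$, i.e. $x\in K^{\fg}$. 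So $(2)$ reduces to the bookkeeping that "$\fg/\mathfrak n$-invariant inside $K^{\mathfrak n}$" unpacks to the same equation in $K\otimes_{\ka}H$ as "$\fg$-invariant", which is immediate once the Hopf subalgebra picture from $(1)$ is in place.

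The main obstacle I expect is purely in $(1)$: getting the precise duality dictionary right between "$\mathfrak n$ is a normal subgroup scheme of $\fg$" and the Hopf-algebraic statement that lets one conclude $\partial(K^{\mathfrak n})\subseteq K^{\mathfrak n}\otimes_{\ka}H$, and then that the resulting coaction descends along $q$. One clean way to sidestep heavy Hopf-algebra computation is to argue functorially on points: for every $\ka$-algebra $R$, the group $\fg(R)$ acts on $K\otimes_{\ka}R$-points in the usual way, $\mathfrak n(R)\trianglelefteq\fg(R)$ by normality, so $(K\otimes_{\ka}R)^{\mathfrak n(R)}$ is a $\fg(R)/\mathfrak n(R)$-set functorially in $R$, and the classical identity $\big((K\otimes R)^{\mathfrak n(R)}\big)^{\fg(R)/\mathfrak n(R)}=(K\otimes R)^{\fg(R)}$ holds for each $R$; one then invokes the equivalence (via the functor of points / faithful flatness of $\ka$) between such functorial data and the Hopf-comodule statements to conclude. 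Either route works; I would present the functorial one as the backbone and only fall back on explicit $\mu$-computations if the descent-along-$q$ step needs it. I would also remark that this is the point where finiteness of $\fg$ is convenient though not strictly essential — existence of the quotient $\fg/\mathfrak n$ is what we use, and that is guaranteed for finite group schemes by \cite[Section 16.3]{Water}.
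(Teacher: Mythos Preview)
Your proposal is essentially correct and, in fact, does considerably more work than the paper does. The paper gives no proof of this lemma at all: immediately before the statement it simply remarks that ``the next lemma holds in any category in which quotients of group actions exist (as in the case of standard group actions)'' and cites \cite[Section 16.3]{Water} for the existence of quotients of finite group schemes. So the paper treats the result as a formal categorical fact and moves on. Your Hopf-algebra/comodule unpacking is exactly the kind of verification one would supply if asked to justify that sentence, and your argument for Item~$(2)$ --- reducing both inclusions to the injectivity of the Hopf subalgebra $H_{\fg/\mathfrak n}\hookrightarrow H$ once Item~$(1)$ is in hand --- is clean and correct.

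One caution about the functor-of-points ``backbone'' you propose: the identification $(\fg/\mathfrak n)(R)\cong \fg(R)/\mathfrak n(R)$ is \emph{not} valid for arbitrary $R$ (the quotient is only the fppf-sheafification of the naive presheaf quotient), so the sentence ``$(K\otimes_{\ka}R)^{\mathfrak n(R)}$ is a $\fg(R)/\mathfrak n(R)$-set functorially in $R$, and the classical identity \ldots\ holds for each $R$'' does not directly yield the scheme-theoretic conclusion. This is not fatal: to recover the comodule statements you only need to test at the universal point $R=H$ (respectively $R=H_{\fg/\mathfrak n}$, $R=H_{\mathfrak n}$), where the relevant surjectivity is available, or equivalently stick with the Hopf-subalgebra argument you already sketched. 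If you present the functorial argument, flag this point and indicate that testing at the universal $R$ suffices; otherwise the Hopf-algebra route you outlined for Item~$(1)$ (normality $\Rightarrow$ $\partial(K^{\mathfrak n})\subseteq K^{\mathfrak n}\otimes_{\ka}H$, then descent to the sub-Hopf-algebra $H_{\fg/\mathfrak n}$) is the safer presentation.
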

Thanks to Lemma \ref{brick}, we can reduce the proof of our first aim to the case of a reasonably simple finite group scheme $\fg$. The next two results deal with some particular cases of such group schemes. We recall that $e$ is the order of $\fg$, that is $e=\dim_{\ka}(H)$, where $\fg=\spec(H)$.
\begin{lemma}\label{infinitesimal}
Assume that $K$ is a $\fg$-field, $\mathrm{char}(\ka)=p>0$ and $\mathfrak{g}=\ker(\fr_{\fg})$. Then, we have:
$$\left[K:K^{\fg}\right]\leqslant e.$$
\end{lemma}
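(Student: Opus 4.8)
The plan is to use the hypothesis $\fg=\ker(\fr_{\fg})$ to pass from the group scheme $\fg$ to its Lie algebra, and then to invoke the classical Galois theory of purely inseparable field extensions of exponent one.

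Since $\fr_{\fg}$ is trivial, $\fg$ has height $\leqslant 1$, and by the classical anti-equivalence between finite group schemes of height $\leqslant 1$ over $\ka$ and finite-dimensional restricted Lie algebras over $\ka$ (see e.g. \cite[Ch.~II, \S7]{DeGa}), $\fg$ corresponds to $\mathfrak{L}:=\lie(\fg)$; moreover the linear dual Hopf algebra $H^{*}$ is the restricted enveloping algebra of $\mathfrak{L}$, so that $e=\dim_{\ka}H=\dim_{\ka}H^{*}=p^{\dim_{\ka}\mathfrak{L}}$. Under this dictionary, the $\fg$-field structure $\partial$ on $K$ corresponds to a homomorphism of restricted Lie algebras $\rho\colon\mathfrak{L}\to\mathrm{Der}_{\ka}(K)$: the primitive elements of $H^{*}$, which are exactly $\mathfrak{L}$, act on the $H^{*}$-module algebra $K$ by $\ka$-linear derivations, and the $p$-operation is preserved because the $p$-fold composite of a derivation of $K$ is again a derivation. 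Since the augmentation ideal of $H^{*}$ is generated by $\mathfrak{L}$, the ring of constants is $K^{\fg}=\{x\in K\mid D(x)=0\ \text{for all}\ D\in\rho(\mathfrak{L})\}$. Replacing $\mathfrak{L}$ by $\rho(\mathfrak{L})$ we may assume $\rho$ injective; write $f:=\dim_{\ka}\mathfrak{L}$, so $e=p^{f}$.

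Next I would set $\mathfrak{D}:=K\cdot\mathfrak{L}\subseteq\mathrm{Der}_{\ka}(K)$, the $K$-linear span of $\mathfrak{L}$ inside $\mathrm{Der}_{\ka}(K)$. One checks that $\mathfrak{D}$ is again closed under the Lie bracket, via $[fD_{1},gD_{2}]=fg[D_{1},D_{2}]+fD_{1}(g)D_{2}-gD_{2}(f)D_{1}$, and closed under the $p$-th power operation, via Jacobson's identities in characteristic $p$ (in particular $(fD)^{[p]}=f^{p}D^{[p]}+(fD)^{p-1}(f)\cdot D$, together with the Lie-polynomial correction terms for the $p$-th power of a sum). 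Thus $\mathfrak{D}$ is a restricted Lie algebra of derivations of $K$ that is at the same time a $K$-subspace of $\mathrm{Der}_{\ka}(K)$, with $\dim_{K}\mathfrak{D}\leqslant\dim_{K}(K\otimes_{\ka}\mathfrak{L})=f$, and plainly $K^{\mathfrak{D}}=K^{\fg}$ (moreover $K^{p}\subseteq K^{\fg}$, so the extension has exponent $\leqslant 1$). By Jacobson's Galois correspondence for purely inseparable extensions of exponent one (N.~Jacobson, \emph{Lectures in Abstract Algebra~III}, Ch.~IV), one has $[K:K^{\mathfrak{D}}]=p^{\dim_{K}\mathfrak{D}}$. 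Combining the above, $[K:K^{\fg}]=p^{\dim_{K}\mathfrak{D}}\leqslant p^{f}=e$, as required.

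As for the difficulty: no single step is hard, and the real content lies in correctly assembling two standard facts --- the height-$\leqslant 1$ group-scheme/restricted-Lie-algebra dictionary, including the translation of a $\fg$-action into a $\rho$ and of the invariants $K^{\fg}$ into $K^{\rho(\mathfrak{L})}$, and Jacobson's dimension formula $[K:K^{\mathfrak{D}}]=p^{\dim_{K}\mathfrak{D}}$. If a self-contained argument were preferred, the only genuine computation is a proof of that formula, which can be carried out by induction on $\dim_{K}\mathfrak{D}$: at each stage one picks a derivation $D$ in the algebra and an element $t\in K$ with $D(t)$ prescribed, so that $1,t,\dots,t^{p-1}$ enlarges a basis of the previous layer.
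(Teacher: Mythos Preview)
Your argument is correct. The height-$\leqslant 1$ hypothesis is used exactly as it should be: it lets you replace the $\fg$-action by a restricted Lie algebra $\mathfrak{L}\hookrightarrow \mathrm{Der}_{\ka}(K)$ with $e=p^{\dim_{\ka}\mathfrak{L}}$, identify $K^{\fg}$ with the joint kernel, enlarge to the $K$-span $\mathfrak{D}$ (which is again a restricted Lie $K$-subalgebra by Hochschild's formula $(aD)^{p}=a^{p}D^{p}+((aD)^{p-1}a)\,D$ together with Jacobson's $p$-th power identity for sums), and then read off $[K:K^{\fg}]=p^{\dim_K\mathfrak{D}}\leqslant p^{\dim_{\ka}\mathfrak{L}}=e$ from Jacobson's Galois correspondence for exponent-one purely inseparable extensions. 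One cosmetic point: after ``replacing $\mathfrak{L}$ by $\rho(\mathfrak{L})$'' the equality $e=p^{f}$ should be the inequality $p^{f}\leqslant e$; this only strengthens the conclusion, so nothing breaks.

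As for the comparison: the paper does not give an independent argument here but simply observes that the proof of \cite[Corollary~3.21]{HK} carries over verbatim. That proof is written in the explicit coordinate language of truncated Hasse--Schmidt derivations (a basis of $H$ by monomials, the associated commuting nilpotent derivations, and an inductive degree count), whereas you package the same content through the Demazure--Gabriel equivalence and Jacobson's correspondence. The two routes are equivalent in substance --- both exploit that a height-$\leqslant 1$ action is an action by derivations killing $K^{p}$ --- but yours is coordinate-free and applies directly over a possibly imperfect base $\ka$ without first checking that the relevant computations from the truncated case survive. The paper's version is shorter only because it outsources the work to \cite{HK}.
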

\begin{proof}
The proof of \cite[Corollary 3.21]{HK} works in the case considered here as well (our $e$ here plays the role of $pe$ from \cite{HK}).
\end{proof}
\begin{lemma}\label{etale}
Assume that $K$ is a $\fg$-field and $\mathfrak{g}$ is \'{e}tale. Then we have:
$$\left[K:K^{\fg}\right]\leqslant e.$$
\end{lemma}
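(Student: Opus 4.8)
The plan is to reduce the étale case to the classical Galois-theoretic situation by base change, exploiting that an étale group scheme becomes constant over a finite separable extension. First I would pick a finite Galois extension $\ka\subseteq\ka'$ such that $\fg_{\ka'}\cong G_{\ka'}$ for some finite group $G$ with $|G|=e$ (this exists by Example \ref{ex1}(4)). Set $K':=K\otimes_{\ka}\ka'$; since $\ka'/\ka$ is separable and $K$ is a field, $K'$ is a finite product of fields, and the $\fg$-action on $K$ induces a $\fg_{\ka'}\cong G_{\ka'}$-action on $K'$, i.e. an honest action of the finite group $G$ on $K'$ by ring automorphisms. The key point is then that constants are compatible with this base change: $K^{\fg}\otimes_{\ka}\ka'=(K')^{G}$, because tensoring the defining equalizer $K^{\fg}\to K\rightrightarrows K\otimes_{\ka}H$ with the flat $\ka$-module $\ka'$ commutes with forming the equalizer, and the action of $G_{\ka'}$ on $K'$ is exactly the one read off from $\partial\otimes\ka'$.

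The second step is to bound $[K':(K')^G]$. For an action of a finite group $G$ on a ring, the rank of $K'$ over the fixed subring $(K')^G$ is at most $|G|=e$; for fields this is Artin's lemma, and for the finite étale $\ka'$-algebra $K'$ one can either argue componentwise (if $G$ acts transitively on the factors the stabilizer of one factor has index equal to the number of factors, and the residual action gives the Artin bound on each) or simply invoke that $K'$ is a separable $(K')^G$-algebra of rank $\le|G|$. Hence $\dim_{(K')^G}K'\le e$. Now $[K':(K')^G]=[K\otimes_{\ka}\ka':K^{\fg}\otimes_{\ka}\ka']=[K:K^{\fg}]$, since dimension is preserved under the faithfully flat base change $\ka\to\ka'$ (both sides equal $\dim_{K^{\fg}}K$ computed before or after $\otimes_{\ka}\ka'$). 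Combining, $[K:K^{\fg}]\le e$, which is the claim.

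An alternative, more self-contained route that avoids any separability bookkeeping is to mimic the proof of Lemma \ref{infinitesimal}: translate the $\fg$-action into the operators $\partial_1,\dots,\partial_{e-1}:K\to K$ with their iterativity rules, observe that $K^{\fg}=\bigcap_i\ker(\partial_i-\text{(linear part)})$, and run the linear-algebra argument from \cite[Corollary 3.21]{HK} showing that $K$ embeds as a $K^{\fg}$-subspace of $H^{*}$ (or of $K^{\fg}\otimes_{\ka}H$) via $x\mapsto\partial(x)$, which forces $[K:K^{\fg}]\le\dim_{\ka}H=e$. This is essentially the dual of the standard proof that a field with a faithful action of a finite group scheme of order $e$ is of degree $\le e$ over its invariants. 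I would most likely present the base-change proof as the clean one and remark that the second works uniformly.

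The main obstacle I expect is the bookkeeping in the first approach when $K'=K\otimes_{\ka}\ka'$ is not a field: one must make sure that ``$[K':(K')^G]\le|G|$'' really holds for this étale algebra and that the identification of constants under base change is clean, i.e. that the $G$-action obtained on $K'$ is literally the translate of $\partial$ (no twisting of the Hopf algebra basis causes trouble). Once one grants the standard facts that (i) constants commute with flat base change and (ii) a finite group acting on a (possibly disconnected) reduced Noetherian ring yields rank $\le|G|$ over the invariants, the rest is formal. If one instead goes the operator route, the obstacle is simply checking that the argument of \cite[Corollary 3.21]{HK} is insensitive to whether $\fg$ is infinitesimal or étale — which it is, since it only uses $\dim_{\ka}H=e$ and the counit/coassociativity relations, both available here.
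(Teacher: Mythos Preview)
Your first approach is exactly the paper's strategy: pass to a finite Galois extension $\ka\subseteq\ka'$ that splits $\fg$, turn the $\fg$-action on $K$ into an honest $G$-action on the semisimple ring $K'=K\otimes_{\ka}\ka'$, and bound over invariants. The difference is only in execution. The paper outsources both steps: it invokes \cite[Theorem~4.4]{BHKK} to reduce $[K:K^{\fg}]\le e$ to the statement that $K'$ is generated by $e$ elements over $(K')^G$, and then quotes \cite[Theorem~2.19]{montgomeryfixed} (using that a commutative semisimple ring is self-injective) for that generator bound. You instead do both steps by hand---flat base change of the equalizer defining $K^{\fg}$ to identify $(K')^G$ with $K^{\fg}\otimes_{\ka}\ka'$, and a componentwise Artin argument on the finite product of fields $K'$ to get the rank bound. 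Your version is more transparent and self-contained; the paper's is terser. Your orbit/stabilizer sketch for the non-field case is correct and is precisely what \cite[Theorem~2.19]{montgomeryfixed} encapsulates in this commutative setting. The alternative operator route you mention is not what the paper does for the \'etale case, though it is indeed how Lemma~\ref{infinitesimal} handles the infinitesimal case.
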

\begin{proof}
Let $\ka\subseteq \mathbf{l}$ be a finite Galois extension such that $\fg_{\mathbf{l}}\cong G_{\mathbf{l}}$, where $G$ is a finite group of order $e$ (see Example \ref{ex1}(4)). Since $K$ is a $\fg$-field, $K\otimes_{\ka}\mathbf{l}$ becomes a $\fg_{\mathbf{l}}$-ring, which means that the group $G$ acts on $K\otimes_{\ka}\mathbf{l}$ by $\mathbf{l}$-algebra automorphisms. By \cite[Theorem 4.4]{BHKK}, it is enough to show that $K\otimes_{\ka}\mathbf{l}$ can be generated by $e$ elements as a $(K\otimes_{\ka}\mathbf{l})^G$-module.

Since the extension $\ka\subseteq \mathbf{l}$ is finite separable, the ring $K\otimes_{\ka}\mathbf{l}$ is a reduced Artinian $K$-algebra, so it is semisimple.
By \cite[Theorem 2.19]{montgomeryfixed} and the comments about the finite number of generators after its proof (since a commutative semisimple ring is necessarily self-injective), our inequality follows.
\end{proof}
We are ready now to show our main finite generation result. As already pointed out in Remark \ref{integral}(3), we doubt whether this result is new, but we could not find any reference to it.
\begin{theorem}\label{fgfinite}
Assume that $K$ is a $\fg$-field. Then we have:
$$\left[K:K^{\fg}\right]\leqslant e.$$
\end{theorem}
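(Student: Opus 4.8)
The plan is to peel off the Frobenius kernel of $\fg$ and induct. Precisely, I would argue by induction on the order $e$ of $\fg$. If $\fg$ is \'etale (in particular if $e=1$), then Lemma \ref{etale} gives the bound immediately. Otherwise, by the connected--\'etale sequence the infinitesimal part $\fg^0$ of $\fg$ is nontrivial, and hence the Frobenius kernel $\mathfrak{n}:=\ker(\fr_{\fg})$ is a nontrivial subgroup scheme of $\fg$; writing $\mathfrak{n}=\spec(H_{\mathfrak{n}})$, we have $1<\dim_{\ka}H_{\mathfrak{n}}\leqslant e$. One should also record at the outset that $K^{\fg}$ is a subfield of $K$ --- inverses of constants are constants, since $\partial$ is a $\ka$-algebra homomorphism --- so that $[K:K^{\fg}]$ makes sense as a field degree; the same applies to the intermediate fields below.

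Now $\mathfrak{n}$ is normal in $\fg$ (being the kernel of a homomorphism) and has height at most one (the relative Frobenius restricts trivially to it, and a closed immersion is a monomorphism). Hence Lemma \ref{brick} applies with this $\mathfrak{n}$: $K$ is an $\mathfrak{n}$-field, $K^{\mathfrak{n}}$ is a $\fg/\mathfrak{n}$-field, and $(K^{\mathfrak{n}})^{\fg/\mathfrak{n}}=K^{\fg}$. Since $\mathfrak{n}$ has height at most one, Lemma \ref{infinitesimal} gives $[K:K^{\mathfrak{n}}]\leqslant\dim_{\ka}H_{\mathfrak{n}}$. On the other hand, the order is multiplicative in the short exact sequence $1\to\mathfrak{n}\to\fg\to\fg/\mathfrak{n}\to 1$, so $\fg/\mathfrak{n}$ has order $e/\dim_{\ka}H_{\mathfrak{n}}<e$, and the inductive hypothesis applied to the $\fg/\mathfrak{n}$-field $K^{\mathfrak{n}}$ yields $[K^{\mathfrak{n}}:K^{\fg}]\leqslant e/\dim_{\ka}H_{\mathfrak{n}}$. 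Multiplying these two bounds through the tower $K^{\fg}\subseteq K^{\mathfrak{n}}\subseteq K$ gives $[K:K^{\fg}]\leqslant e$, which closes the induction.

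The d\'evissage itself is short once Lemmas \ref{brick}, \ref{infinitesimal} and \ref{etale} are in hand, so the only mildly delicate point --- which I expect to be the main obstacle --- is the bookkeeping of finite group schemes over the possibly imperfect base field $\ka$: that $\ker(\fr_{\fg})$ is nontrivial whenever $\fg$ is not \'etale (equivalently, whenever $\fg^0\neq 1$; here one uses that $\ker(\fr_{\fg})=\ker(\fr_{\fg^0})$ and that the iterated Frobenius kernels of a nontrivial infinitesimal group scheme exhaust it), that it is normal of height at most one, and that the order is multiplicative along short exact sequences. All of these are standard facts about finite group schemes (see e.g. \cite[Sections 6.7 and 16.3]{Water} and \cite{DeGa}), so with them in place the argument goes through and gives $\left[K:K^{\fg}\right]\leqslant e$.
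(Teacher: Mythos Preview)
Your proposal is correct and essentially follows the paper's approach. Both arguments d\'evisse via Lemma~\ref{brick}, using Lemma~\ref{etale} for the \'etale part and Lemma~\ref{infinitesimal} for the height-one Frobenius kernel; the only cosmetic difference is that the paper first splits off the connected--\'etale sequence and then peels Frobenius kernels inside the connected component, whereas you peel the Frobenius kernel of $\fg$ directly and treat the \'etale case as the base of the induction.
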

\begin{proof}
By the short exact sequence of finite group schemes from the Introduction (see \cite[Section 6.7]{Water}) and Lemma \ref{brick}, we can assume that $\fg$ is connected or $\fg$ is \'{e}tale. The \'{e}tale case holds by Lemma \ref{etale}. For the connected case, by Lemma \ref{brick} again we can assume that $\ker(\fr_{\fg})=\fg$ (see \cite[Remark 2.3]{HK}), and then the proof is concluded by Lemma \ref{infinitesimal}.
\end{proof}

%Then we should have
% $[K:K^{\fg}]\leqslant e$ and $[K:K^{\fg}]=e$ iff $(K,\partial)$ is \emph{strict}, which should be implied by $(K,\partial)$ being existentially closed.
%\\
%{\bf This is probably not true!}
%Let us assume that $(K,\partial)$ is existentially closed. Since the structure $(K,\partial)$ is definable in the pure field $K^{\fg}$, we need to show that %$\mathrm{Th}(K^{\fg})$ is simple. For that, we need to show that $K^{\fg}$ is PAK^{\fg} and bounded.
%\\
We recall (the reader may consult e.g. \cite[Section 11]{FrJa}) that a field $M$ is \emph{pseudo-algebraically closed}, abbreviated \emph{PAC}, if each absolutely irreducible variety over $M$ has an $M$-rational point.
\begin{prop}\label{pac}
If $K$ is an existentially closed $\fg$-field, then $K^{\fg}$ is PAC.
\end{prop}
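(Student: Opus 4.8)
The plan is to show that $K^\fg$ satisfies the defining property of PAC fields: every absolutely irreducible $K^\fg$-variety $U$ has a $K^\fg$-rational point. So fix such a $U$, say $U \subseteq \Aa^m_{K^\fg}$, absolutely irreducible over $K^\fg$. The strategy is to produce an auxiliary pair of varieties $(V,W)$ over $K$ witnessing the geometric axioms for $\fg$-DCF in such a way that a solution $a \in V(K)$ with $\partial_V(a) \in W(K)$ forces a point of $U$ lying in $K^\fg$; then existential closedness of $(K,\partial)$ gives such an $a$ and we are done.

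First I would set $V := U \times_{\spec(K^\fg)} \spec(K)$, the base change of $U$ to $K$; this is still absolutely irreducible, and crucially it is defined over the field of constants $K^\fg$. The point of working over $K^\fg$ is that the $\fg$-action (equivalently, the operators $\partial_1,\ldots,\partial_{e-1}$) vanishes on the defining ideal of $V$, which lets me pick out a canonical ``constant'' copy of $V$ sitting inside $\nabla(V)$. Concretely, since $\partial$ restricted to $K[V]$-equations behaves trivially, the map $\partial_V : V(K) \to \nabla(V)(K)$ followed by nothing is just the zero-section-type embedding $V \hookrightarrow \nabla(V)$ (in the coordinates of Remark \ref{easy}(2), $x \mapsto (x,0,\ldots,0)$ when $V$ is defined over $K^\fg$), and I would take $W$ to be precisely the image of this section, i.e. a copy of $V$ embedded in $\nabla(V)$ as the ``$\partial_i = 0$ for $i>0$'' locus. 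Then $W \cong V$ is a $K$-variety with $W \subseteq \nabla(V)$, and I must check the compatibility hypothesis $c_V(W) \subseteq \nabla(W)$; this should follow from the fact that $c_V$ acts on the $(x_0,\ldots,x_{e-1})$-coordinates via the structure constants $c^{i,j}_l$ (Remark \ref{cfor}(1)) and on the $\partial_i = 0$ locus this again lands in the corresponding constant locus of $\nabla(W)$, using the counit/coassociativity relations among the $c^{i,j}_l$.

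Now the geometric axioms for $\fg$-DCF, valid in $(K,\partial)$ by Theorem \ref{gdcfexists} since $K$ is existentially closed, yield $a \in V(K)$ with $\partial_V(a) \in W(K)$. But $\partial_V(a) = (a, \partial_1(a),\ldots,\partial_{e-1}(a))$ lying in $W$, the $\partial_i = 0$ locus, means exactly $\partial_i(a) = 0$ for all $i > 0$, i.e. $\partial(a) = a$, i.e. $a \in V(K^\fg)$. Since $V = U \times_{K^\fg} K$ and $a$ is a $K^\fg$-point of $V$, it corresponds to a $K^\fg$-point of $U$. Hence $U(K^\fg) \neq \emptyset$, and as $U$ was an arbitrary absolutely irreducible $K^\fg$-variety, $K^\fg$ is PAC.

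The main obstacle I anticipate is the bookkeeping around ``$V$ is defined over $K^\fg$ implies the canonical section $V \hookrightarrow \nabla(V)$ has image a $K$-subvariety $W$ with $c_V(W) \subseteq \nabla(W)$''. One has to be careful that $W$ is genuinely a $K$-variety (reduced and $K$-irreducible) and that the hypothesis of the geometric axioms is literally met; the cleanest route is probably to invoke Lemma \ref{iterdual}, observing that the section $V \to \nabla(V)$ corresponds to the trivial $\fg$-ring structure on $K[V]$ (which exists precisely because $V$ descends to $K^\fg$), and then $W := $ that section's image automatically satisfies the relevant diagram, giving $c_V(W) = \nabla(\mathrm{incl})(\varphi(W)) \subseteq \nabla(W)$ for free. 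A secondary point to handle is absolute irreducibility being preserved under the base change $K^\fg \subseteq K$, which is standard. Everything else is a direct application of Theorem \ref{gdcfexists} and the characterization of constants.
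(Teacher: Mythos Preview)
Your approach is correct and genuinely different from the paper's. The paper does not touch the geometric axioms at all: it invokes the Fried--Jarden characterisation of PAC fields as those existentially closed in every regular extension, takes such an extension $K^{\fg}\subseteq L$, forms $M:=K\otimes_{K^{\fg}}L$, equips $M$ with a $\fg$-structure via Lemma~\ref{tenfra}(1), and uses Theorem~\ref{fgfinite} (finiteness of $K/K^{\fg}$) together with regularity to see that $M$ is a field; existential closedness of $(K,\partial)$ then transfers quantifier-free sentences from $L$ down to $K^{\fg}$. Your argument instead applies Theorem~\ref{gdcfexists} directly: given an absolutely irreducible $U$ over $K^{\fg}$, you base-change to $V$, observe that $K[V]$ carries the $\fg$-ring structure inherited from $\partial$ on $K$ with $K^{\fg}[U]$ in the constants (this is exactly Lemma~\ref{tenfra}(1) applied to $K\otimes_{K^{\fg}}K^{\fg}[U]$), and let $W\subseteq\nabla(V)$ be the image of the associated section $\varphi$ from Lemma~\ref{iterdual}; the commutative square there gives $c_V(W)\subseteq\nabla(W)$ immediately, and $\partial_V(a)\in W$ forces $a\in V(K^{\fg})=U(K^{\fg})$. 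Your route is more hands-on with the axiom scheme but has the pleasant feature of \emph{not} needing Theorem~\ref{fgfinite}; the paper's route is shorter and basis-free but leans on that finiteness result. One small caution: your description of $W$ as ``the $\partial_i=0$ locus $(x,0,\ldots,0)$'' is only literally correct in coordinates $X_i$ chosen from $K^{\fg}[U]$ (in the constant-group case, for instance, $W$ is the diagonal, not a zero-section); the coordinate-free formulation via Lemma~\ref{iterdual} that you propose at the end is the right way to phrase it.
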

\begin{proof}
By \cite[Proposition 11.3.5]{FrJa}, we need to show that $K^{\fg}$ is existentially closed in each regular field extension $K^{\fg}\subseteq L$. We can assume that $K$ and $L$ are algebraically disjoint over $K^{\fg}$. Let us define:
$$M:=K\otimes_{K^{\fg}} L.$$
By Lemma \ref{tenfra}(1), there is a $\mathfrak{g}$-ring structure $\partial'$ on $M$ such that $L\subseteq R^{\partial'}$. Since the field extension $K^{\fg}\subseteq L$ is regular and the field extension $K^{\fg}\subseteq K$ is finite algebraic (by Theorem \ref{fgfinite}), the ring $M$ is a field.
%Since $K$ and $L$ are algebraically disjoint over $C$ and the field extension $C\subseteq L$ is regular, we get that $R$ is a domain. By Lemma %\ref{tenfra}(2), $\partial'$ extends to $\mathfrak{g}$-field structure (denoted by the same symbol $\partial'$) on $M$: the field of fractions of $R$.
Then we have the following:
\begin{itemize}
\item $(K,\partial)\subseteq (M,\partial')$ is a $\mathfrak{g}$-field extension;

\item the $\mathfrak{g}$-field $(K,\partial)$ is existentially closed;

\item $L\subseteq M^{\partial'}$.
\end{itemize}
The conditions above imply that for any quantifier-free formula $\varphi(x)$ over $K^{\fg}$ in the language of fields, $\varphi(K^{\fg})\neq \emptyset$ if and only if $\varphi(L)\neq \emptyset$. Therefore, $K^{\fg}$ is existentially closed in $L$, what we needed to show.
\end{proof}
\begin{cor}\label{cor:K.PAC}
If $K$ is an existentially closed field $\fg$-field, then $K$ is PAC.
\end{cor}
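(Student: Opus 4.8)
The plan is to derive the corollary directly from the two results already proved in this section for an existentially closed $\fg$-field $K$: Proposition \ref{pac} tells us that the field of constants $K^{\fg}$ is PAC, while Theorem \ref{fgfinite} tells us that $[K:K^{\fg}]\leqslant e$, so that $K^{\fg}\subseteq K$ is a finite, hence algebraic, field extension. The only further ingredient is the standard fact from field arithmetic that the PAC property is inherited by algebraic extensions.

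Concretely, the steps are: first, apply Proposition \ref{pac} to get that $K^{\fg}$ is PAC; second, apply Theorem \ref{fgfinite} to get that $K^{\fg}\subseteq K$ is algebraic; third, invoke the theorem that every algebraic extension of a PAC field is again PAC (see e.g. \cite[Corollary 11.2.5]{FrJa}) to conclude that $K$ itself is PAC. There is no computation to carry out here.

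The one point that deserves attention — and the reason the weaker statement ``separable algebraic extensions of PAC fields are PAC'' would not suffice — is that the extension $K^{\fg}\subseteq K$ can fail to be separable. For instance, if $\ch(\ka)=p>0$ and $\fg=\ga[1]$, then $K^{\fg}$ is the kernel of a derivation $d$ on $K$ with $d^{(p)}=0$, and whenever $d\neq 0$ the extension $K^{\fg}\subseteq K$ is purely inseparable of degree $p$, since $d(a^p)=pa^{p-1}d(a)=0$ for every $a\in K$. Thus one genuinely needs the transfer result in the form valid for arbitrary algebraic extensions; with that in hand the corollary is immediate, and this is the only subtlety I anticipate.
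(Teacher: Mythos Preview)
Your proposal is correct and follows exactly the paper's own argument: combine Proposition \ref{pac} and Theorem \ref{fgfinite} with \cite[Corollary 11.2.5]{FrJa} on algebraic extensions of PAC fields. Your extra remark about the possible inseparability of $K^{\fg}\subseteq K$ is a helpful clarification not present in the paper's one-line proof.
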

\begin{proof}
Since an algebraic extension of a PAC field is a PAC field (see \cite[Corollary 11.2.5]{FrJa}), it is enough to use Proposition \ref{pac} and Theorem \ref{fgfinite}.
\end{proof}
For the boundedness part, we would like to single out first the following Galois-theoretic result, which appeared in some form in \cite{Sjo}, \cite[Lemma 3.7]{HK3}, and \cite[Lemma 2.8]{BK2}. We recall that for a field $M$, $\gal(M)$ denotes the \emph{absolute Galois group} of $M$ (consider as a topological profinite group), that is:
$$\gal(M):=\gal\left(M^{\mathrm{sep}}/M\right).$$
For the necessary background about Frattini covers, we refer the reader to \cite[Chapter 22]{FrJa}.
\begin{prop}\label{newfrat}
Suppose that $M\subseteq N$ is a finite Galois field extension. Then the following conditions are equivalent.
\begin{enumerate}
\item The restriction map
$$\mathrm{res}:\gal(M)\ra \gal(N/M)$$
is a Frattini cover.

\item For any separable field extension $M\subseteq M'$, $M'$ is not linearly disjoint from $N$ over $M$.
\end{enumerate}
\end{prop}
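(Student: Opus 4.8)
The plan is to prove the equivalence of the two conditions in Proposition \ref{newfrat} by unwinding the definition of a Frattini cover and translating the linear disjointness condition into a statement about proper closed subgroups of $\gal(M)$ surjecting onto $\gal(N/M)$. Recall that a continuous epimorphism $\phi\colon \Gamma\to \Delta$ of profinite groups is a \emph{Frattini cover} if no proper closed subgroup of $\Gamma$ maps onto $\Delta$ under $\phi$; equivalently, $\ker(\phi)$ is contained in the Frattini subgroup of $\Gamma$. Here $\phi = \mathrm{res}$ and $\ker(\mathrm{res}) = \gal(N) \leqslant \gal(M)$.

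First I would set up the Galois correspondence dictionary. A separable extension $M\subseteq M'$ corresponds (passing to a Galois closure if needed, but more cleanly by working directly with $M' \subseteq M^{\sep}$) to a closed subgroup $\Gamma' := \gal(M^{\sep}/M') \leqslant \gal(M)$. The compositum $M'N$ inside $M^{\sep}$ corresponds to $\Gamma' \cap \gal(N)$, and the intersection $M'\cap N$ corresponds to the subgroup generated by $\Gamma'$ and $\gal(N)$, i.e. to $\Gamma'\cdot\gal(N)$ (which is a subgroup since $\gal(N)$ is normal in $\gal(M)$). The statement ``$M'$ is linearly disjoint from $N$ over $M$'' is, since $M\subseteq N$ is finite Galois, equivalent to ``$M' \cap N = M$'', which translates to $\Gamma'\cdot \gal(N) = \gal(M)$, i.e. $\mathrm{res}(\Gamma') = \gal(N/M)$. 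Thus condition (2) — ``for every separable $M\subseteq M'$, $M'$ is \emph{not} linearly disjoint from $N$ over $M$'' — says precisely: for every closed subgroup $\Gamma'\leqslant\gal(M)$, if $\mathrm{res}(\Gamma')=\gal(N/M)$ then $\Gamma'=\gal(M)$ (the case $\Gamma'=\gal(M)$ corresponds to $M'=M$, which is trivially not linearly disjoint from $N$ over $M$ unless $N=M$; the edge case $N=M$ should be checked separately, where both conditions hold vacuously). But this is exactly the statement that no proper closed subgroup of $\gal(M)$ maps onto $\gal(N/M)$, which is the definition of $\mathrm{res}$ being a Frattini cover. I would then read off (1) $\Leftrightarrow$ (2) directly.

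The one genuine subtlety — and the step I expect to need the most care — is the precise correspondence between separable extensions $M\subseteq M'$ and closed subgroups of $\gal(M)$: not every such $M'$ is contained in a \emph{fixed} $M^{\sep}$ up to isomorphism in a canonical way, and $M'$ need not be Galois over $M$. The clean fix is to observe that linear disjointness of $M'$ from $N$ over $M$ depends only on the $M$-isomorphism type of $M'$, so we may assume $M'\subseteq M^{\sep}$, and then $M'\mapsto \gal(M^{\sep}/M')$ is the standard (inclusion-reversing) bijection between intermediate fields $M\subseteq M'\subseteq M^{\sep}$ and closed subgroups of $\gal(M)$. One also uses that $N\subseteq M^{\sep}$ since $N/M$ is (finite) separable, and that $N/M$ being Galois makes $\gal(M^{\sep}/N)$ normal in $\gal(M)$, which is what lets us identify $M'\cap N$ with the fixed field of $\langle \Gamma', \gal(M^{\sep}/N)\rangle = \Gamma'\cdot\gal(M^{\sep}/N)$. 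With these identifications in place, the equivalence is a formal translation and no further computation is required; I would present it as two short paragraphs, one for each implication, after laying out the dictionary, or simply as a single ``the following are equivalent'' chain since every step is reversible.
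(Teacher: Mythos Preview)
Your approach is essentially the paper's: both use Galois correspondence to translate ``$M'$ is linearly disjoint from $N$ over $M$'' into ``$\mathrm{res}(\Gamma') = \gal(N/M)$'' (the paper cites this equivalence as \cite[Lemma 2.8]{BK2}, you derive it directly via $M' \cap N = M$), and then read off the Frattini condition. One slip: your parenthetical claims that $M' = M$ is ``trivially not linearly disjoint from $N$ over $M$'', but the opposite holds --- $M$ is always linearly disjoint from any extension over itself. Condition (2) must therefore be read as ranging over \emph{proper} extensions $M \subsetneq M'$ (which is exactly how the paper applies it in Theorem~\ref{ncbound}); with that reading your translation goes through cleanly.
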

\begin{proof}
We will use \cite[Lemma 2.8]{BK2}, which says that for any closed subgroup $\mathcal{H}\leqslant \gal(M)$ and for
$$M':=\left(M^{\alg}\right)^{\mathcal{H}},$$
the following conditions are equivalent:
\begin{enumerate}
\item[(A)] $\mathrm{res}(\mathcal{H})=\gal(N/M)$,

\item[(B)] the restriction map
$$\gal(NM'/M')\ra \gal(N/M)$$
is an isomorphism.
\end{enumerate}
It is easy to see that Condition (B) above is equivalent to saying that $M'$ is linearly disjoint from $N$ over $M$. Since the restriction map
$$\mathrm{res}:\gal(M)\ra \gal(N/M)$$
is a Frattini cover if and only if for each proper closed subgroup $\mathcal{H}$ as above, we have $\mathrm{res}(\mathcal{H})\neq \gal(N/M)$, the result follows.
\end{proof}
%We will also need the following classical fact about absolute Galois groups.
%\\
%\textbf{not necessary?!}
%\begin{lemma}\label{absins}
%Suppose that $C\subseteq C'$ is an algebraic field extension which is purely inseparable. Then, we have:
%$$\gal(C)\cong \gal(C').$$
%\end{lemma}
Our next aim is to show that if $K$ is an existentially closed $\mathfrak{g}$-field, then $K^{\fg}$ is bounded, that is the absolute Galois group $\gal(K^{\fg})$ is small. As in the case of actions of finite groups, we will achieve this aim by identifying $\gal(K^{\fg})$ with the universal Frattini cover of a certain finite group. Similarly as in the group case, the whole strength of the existential closedness assumption is not necessary here, we will only need the property expressed in the following definition.
\begin{definition}
A $\mathfrak{g}$-field $K$ is \emph{$\mathfrak{g}$-closed}, if there are no non-trivial $\mathfrak{g}$-field extensions $K\subset L$ such that the pure field extension $K\subset L$ is algebraic.
\end{definition}
The following result is crucial for our proof of the simplicity of the theory $\fg$-DCF.
\begin{theorem}\label{ncbound}
Assume that a $\fg$-field $K$ is $\fg$-closed and $C=K^{\fg}$. Let $C\subseteq K^{\mathrm{nc}}$ be the normal closure of the field extension $C\subseteq K$.
Then, the restriction map:
$$\gal(C)\ra \gal(K^{\mathrm{nc}}/C)$$
is a Frattini cover.
\end{theorem}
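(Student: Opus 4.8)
By Proposition \ref{newfrat}, the statement is equivalent to showing: for every separable field extension $C \subseteq C'$, the field $C'$ is not linearly disjoint from $K^{\mathrm{nc}}$ over $C$. So I would fix a finite separable extension $C \subseteq C'$ linearly disjoint from $K^{\mathrm{nc}}$ over $C$ and aim to conclude $C' = C$, i.e. that the linear disjointness forces $C'$ to be trivial. (It suffices to treat finite $C'$, since $C'$ linearly disjoint from $K^{\mathrm{nc}}$ is a directed-union statement and being a Frattini cover is tested on proper closed subgroups, each of which is contained in an open one; more precisely, use the criterion that $\mathrm{res}$ is Frattini iff no \emph{proper} closed subgroup of $\gal(C)$ maps onto $\gal(K^{\mathrm{nc}}/C)$, and a proper closed subgroup is contained in a proper open one.)

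**Key steps.** First, form the compositum $L := C' \cdot K^{\mathrm{nc}}$ inside a fixed algebraic closure, or rather the more economical $L := C' \cdot K$. The linear disjointness of $C'$ from $K^{\mathrm{nc}}$ over $C$, together with the fact that $C \subseteq K$ is finite (Theorem \ref{fgfinite}) with normal closure $K^{\mathrm{nc}}$, should give that $C'$ is linearly disjoint from $K$ over $C$, hence $L \cong C' \otimes_C K$ and the extension $K \subseteq L$ is algebraic. The second and main step is to put a $\fg$-field structure on $L$ extending the one on $K$: since $L = K \otimes_C C'$ and $C = K^{\fg}$ acts trivially, I would invoke Lemma \ref{tenfra}(1) with $R = C$ (with the trivial $\fg$-action), $S = K$, and $T = C'$ (again trivial action) to get a $\fg$-ring structure on $K \otimes_C C'$; since this ring is a field (linear disjointness plus regularity of $C \subseteq C'$ — here separability suffices because $C \subseteq K$ is finite), it is a $\fg$-field extension of $K$. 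Third, because $C \subseteq C'$ is separable, the extension $K \subseteq L$ is separable algebraic, hence \emph{algebraic} as a pure field extension. Now apply $\fg$-closedness of $K$: there are no non-trivial algebraic $\fg$-field extensions, so $L = K$, which forces $C' \subseteq K$. Fourth and finally, $C' \subseteq K$ is a separable extension of $C = K^{\fg}$ lying inside $K$; but $C'$ is linearly disjoint from $K^{\mathrm{nc}} \supseteq K$ over $C$, and a subfield of $K$ linearly disjoint from $K$ over $C$ must equal $C$. Hence $C' = C$, as required, and by Proposition \ref{newfrat} the restriction map is a Frattini cover.

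**Main obstacle.** The delicate point is the interplay between the two linear-disjointness bookkeeping steps and the passage from "$C'$ linearly disjoint from $K^{\mathrm{nc}}$" to "$C' \otimes_C K$ is a field and the $\fg$-structure descends." One must be careful that $C \subseteq C'$ being merely separable (not regular) is enough for $K \otimes_C C'$ to be a field: this works precisely because $C \subseteq K$ is \emph{finite}, so $K \otimes_C C'$ is a finite-dimensional $C'$-algebra which is reduced (separability of $C'/C$ and the fact that $K/C$ — being a subextension sitting inside its separable-or-not normal closure — needs handling if $K/C$ is not separable), hence a product of fields; linear disjointness of $C'$ and $K^{\mathrm{nc}}$ over $C$ cuts this product down to a single factor. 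If $K/C$ is not separable one still gets $K \otimes_C C'$ reduced because $C'/C$ is separable (base change by a separable extension preserves reducedness), so this is fine, but it deserves a clean sentence. A secondary subtlety is reducing the Frattini-cover criterion from arbitrary separable $C'$ to finite ones; this is routine via the standard characterization of Frattini covers by proper open subgroups, but should be stated.
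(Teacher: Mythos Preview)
Your overall strategy is the same as the paper's: use the equivalence of Proposition~\ref{newfrat}, take a (proper) separable extension $C\subseteq C'$ linearly disjoint from $K^{\mathrm{nc}}$, tensor with $K$ over $C$, endow the result with a $\fg$-structure via Lemma~\ref{tenfra}, and contradict $\fg$-closedness. The remaining steps (that $K\otimes_C C'$ is a field because $C'$ is linearly disjoint from $K\subseteq K^{\mathrm{nc}}$ over $C$ and $[K:C]<\infty$; that $C'\subseteq K$ together with linear disjointness forces $C'=C$) are fine.

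There is, however, one genuine gap. You invoke Proposition~\ref{newfrat} with $M=C$ and $N=K^{\mathrm{nc}}$, but that proposition assumes $M\subseteq N$ is a finite \emph{Galois} extension, whereas $C\subseteq K^{\mathrm{nc}}$ is in general only \emph{normal}: the extension $K^{\fg}\subseteq K$ can have a purely inseparable part (e.g.\ for $\fg=\ga[1]$ it is entirely purely inseparable), and this survives into the normal closure. So Proposition~\ref{newfrat} does not apply as stated. The paper addresses exactly this point by introducing
\[
C^{\mathrm{ins}}:=\bigl(K^{\mathrm{nc}}\bigr)^{\gal(K^{\mathrm{nc}}/C)},
\]
the purely inseparable closure of $C$ inside $K^{\mathrm{nc}}$. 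Then $C^{\mathrm{ins}}\subseteq K^{\mathrm{nc}}$ \emph{is} Galois, one has $\gal(C)=\gal(C^{\mathrm{ins}})$ and $\gal(K^{\mathrm{nc}}/C)=\gal(K^{\mathrm{nc}}/C^{\mathrm{ins}})$, and Proposition~\ref{newfrat} applies legitimately with $M=C^{\mathrm{ins}}$. The paper then extracts from a linearly-disjoint (proper, Galois) extension $C^{\mathrm{ins}}\subset C^{\mathrm{ins}\prime}$ its maximal separable-over-$C$ part $C\subset C'$, which is itself Galois; this makes the passage from $K\cap C'=C$ to linear disjointness of $K$ and $C'$ over $C$ immediate. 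Apart from this inseparability bookkeeping, your argument and the paper's coincide.
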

\begin{proof}
Let us define:
$$C^{\mathrm{ins}}:=\left(K^{\mathrm{nc}}\right)^{\gal(K^{\mathrm{nc}}/C)}.$$
By \cite[Theorem 19.18]{Isaacs}, the extension $C\subseteq C^{\mathrm{ins}}$ is purely inseparable, the extension $C^{\mathrm{ins}}\subseteq K^{\mathrm{nc}}$ is Galois, and the following diagram commutes:
\begin{equation*}
\xymatrixcolsep{4pc}
\xymatrix{\gal(C)  \ar[r]^-{\mathrm{res}}  \ar[rd]^-{\mathrm{res}} & \gal(K^{\mathrm{nc}}/C^{\mathrm{ins}}) \ar[d]^-{=}\\
 &  \gal(K^{\mathrm{nc}}/C). }
\end{equation*}
Therefore, by Proposition \ref{newfrat}, it is enough to show that for any proper Galois field extension  $C^{\mathrm{ins}}\subset C^{\mathrm{ins}'}$, we have
that $K^{\mathrm{nc}}$ is not linearly disjoint from $C^{\mathrm{ins}'}$ over $C^{\mathrm{ins}}$. Let us assume that there is a proper Galois field extension  $C^{\mathrm{ins}}\subset C^{\mathrm{ins}'}$ such that $K^{\mathrm{nc}}$ is linearly disjoint from $C^{\mathrm{ins}'}$ over $C^{\mathrm{ins}}$.
We aim to reach a contradiction. Let $C\subseteq C'$ be the maximal separable subextension of the extension $C\subset C^{\mathrm{ins}'}$.
The situation is summarized in the following commutative diagram, where all the arrows are inclusions of fields:
\begin{equation*}
\xymatrix{  & K^{\mathrm{nc}}   &   \\
K \ar[ru]^{} & C^{\mathrm{ins}} \ar[r]^{}  \ar[u]^{} &  C^{\mathrm{ins}'}  \\
  & C \ar[lu]^{} \ar[r]^{}  \ar[u]^{}  &   C'  \ar[u]^{} . }
\end{equation*}
We have the following:
$$[C^{\mathrm{ins}'}:C^{\mathrm{ins}}]=[C^{\mathrm{ins}'}:C]_{\mathrm{sep}}=[C':C],$$
$$[C^{\mathrm{ins}}:C]=[C^{\mathrm{ins}'}:C]_{\mathrm{ins}}=[C^{\mathrm{ins}'}:C'].$$
Therefore, $C\subset C'$ is a proper Galois extension and we have:
$$\gal(C'/C)\cong \gal( C^{\mathrm{ins}'}/C^{\mathrm{ins}}),\ \ \ \ K^{\mathrm{nc}}\cap C'= C.$$
Since the extension $C\subset C'$ is Galois and $K\cap C'= C$, we obtain that $K$ is linearly disjoint from $C'$ over $C$. Hence, we have:
$$KC'\cong K\otimes_CC'$$
and by Lemma \ref{tenfra}, the field extension $K\subset KC'$ has a $\fg$-field extension structure. Since the field extension $K\subset KC'$ is proper algebraic, we get a contradiction with the assumption that $K$ is $\fg$-closed.
\end{proof}
One could wonder whether the field extension $C\subseteq K$ appearing in the statement of Theorem \ref{ncbound} is a Galois extension (as it is the case when we consider the actions of groups) or perhaps it is only a normal extension or it is only a separable extension.  The example below shows that ``anything may happen'', hence we really need to consider the fields $K^{\mathrm{nc}}$ and $C^{\mathrm{ins}}$ in the proof of Theorem \ref{ncbound} above.
\begin{example}
\begin{enumerate}
\item If $\mathrm{char}(\ka)=p>0$, $\fg=\ga[1]$, and $K$ is a $\fg$-field such that the $\fg$-action is non-trivial, then the extension $K^{\fg}\subset K$ is purely inseparable and non-trivial (see \cite[Definition 3.5 and Lemma 3.16]{HK}).

\item To see that the field extension $K^{\fg}\subseteq K$ need not be normal (even in the case when $\fg$ is \'{e}tale), one can consider the $\fg$-field $\Qq(\sqrt[3]{2})$ from the end of Example \ref{ex1}(4), where $\fg$ is the group scheme of the third roots of unity over $\Qq$. Then, we have:
    $$\left(\Qq(\sqrt[3]{2})\right)^{\fg}=\Qq$$
    and the field extension $\Qq\subset \Qq(\sqrt[3]{2})$ is clearly not normal.
\end{enumerate}

\end{example}
\begin{cor}\label{bounded}
If $K$ is an existentially closed $\fg$-field, then the field $K^{\fg}$ is PAC and bounded.
\end{cor}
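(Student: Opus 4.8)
The plan is to deduce Corollary \ref{bounded} from the two facts we already have in hand: Proposition \ref{pac}, which gives that $K^{\fg}$ is PAC whenever $K$ is an existentially closed $\fg$-field, and Theorem \ref{ncbound}, which computes $\gal(C)$ (for $C=K^{\fg}$) as a Frattini cover of a fixed finite group, once we know $K$ is $\fg$-closed.

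So the first step is to observe that an existentially closed $\fg$-field $K$ is in particular $\fg$-closed: if $K\subset L$ were a non-trivial $\fg$-field extension with $K\subset L$ algebraic as pure fields, then pick any $\alpha\in L\setminus K$; its minimal polynomial over $K$ together with the finitely many equations expressing the values of $\partial_1,\ldots,\partial_{e-1}$ on the $K$-powers of $\alpha$ form a quantifier-free $L_{\fg}$-type over $K$ realised in $L$ but not in $K$, contradicting existential closedness. Hence Theorem \ref{ncbound} applies with $C=K^{\fg}$, and the restriction map $\gal(C)\to\gal(K^{\mathrm{nc}}/C)$ is a Frattini cover onto the finite group $\gal(K^{\mathrm{nc}}/C)$; note that $[K^{\mathrm{nc}}:C]$ is bounded in terms of $e$ by Theorem \ref{fgfinite} (the normal closure of an extension of degree $\leqslant e$ has degree at most $e!$), though for boundedness we only need that it is some finite group.

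The second step is purely group-theoretic and standard: a profinite group which is a Frattini cover of a finite group is small (it has only finitely many open subgroups of each index, indeed for each $n$ only finitely many open normal subgroups of index dividing $n$) --- this is exactly the content of the theory of the universal Frattini cover in \cite[Chapter 22]{FrJa}, since a Frattini cover of a finite group $\Gamma$ is a quotient of the universal Frattini cover $\tilde{\Gamma}$, and $\tilde{\Gamma}$ is small. Thus $\gal(K^{\fg})$ is small, i.e. $K^{\fg}$ is bounded. Combining with Proposition \ref{pac} gives that $K^{\fg}$ is PAC and bounded, which is the assertion.

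I do not expect any genuine obstacle here: the corollary is essentially the conjunction of Proposition \ref{pac} and Theorem \ref{ncbound} plus the elementary remark that existential closedness implies $\fg$-closedness. The only point requiring a sentence of care is precisely that last implication (one must phrase the witnessing formula correctly in $L_{\fg}$, using that $K\subset L$ algebraic lets one capture $\alpha$ by a quantifier-free formula), and the bookkeeping that ``Frattini cover of a finite group $\Rightarrow$ small'' is a black-box fact from \cite{FrJa}.
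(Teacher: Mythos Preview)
Your proposal is correct and follows essentially the same route as the paper: invoke Proposition~\ref{pac} for PAC, observe that existentially closed implies $\fg$-closed, apply Theorem~\ref{ncbound}, and conclude smallness of $\gal(K^{\fg})$ from the Frattini-cover statement via standard facts in \cite{FrJa}. Two minor remarks: for the $\fg$-closedness step, the bare minimal polynomial $p(x)=0$ of $\alpha$ already gives a quantifier-free ring formula realised in $L$ but not in $K$, so the extra clauses about the $\partial_i$-values (which need not lie in $K$) are unnecessary; and the paper phrases the profinite step as ``Frattini cover of a finitely generated group is finitely generated, hence small'' rather than going through the universal Frattini cover, but the two arguments are interchangeable.
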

\begin{proof}
The PAC part is exactly Proposition \ref{pac}.

We proceed to show the boundedness part. Since an existentially closed $\fg$-field is $\fg$-closed, we can apply Theorem \ref{ncbound} and obtain that the restriction map
$$\gal(K^{\fg})\ra \gal(K^{\mathrm{nc}}/K^{\fg})$$
is a Frattini cover. Since a Frattini cover of a topologically finitely generated profinite group (in our case, it just the finite group $\gal(K^{\mathrm{nc}}/K^{\fg})$) is finitely generated again (see \cite[Corollary 22.5.3]{FrJa}), we get that the profinite group $\gal(K^{\fg})$ is topologically finitely generated. By \cite[Proposition 2.5.1(a)]{progps}, the profinite group $\gal(K^{\fg})$ is small, therefore the field $K^{\fg}$ is bounded.
\end{proof}
\begin{theorem}\label{simple}
The theory $\mathfrak{g}-\mathrm{DCF}$ is simple.
\end{theorem}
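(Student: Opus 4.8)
The plan is to deduce simplicity of $\fg$-DCF from the fact that $\fg$-DCF is essentially the model companion of the theory of fields equipped with a finite amount of extra algebraic structure over a PAC substructure, and to invoke the Chatzidakis--Pillay style transfer of simplicity to such theories. Concretely, by Corollary \ref{bounded}, in a model $(K,\partial)\models \fg$-DCF the field of constants $C=K^{\fg}$ is PAC and bounded, hence by the Chatzidakis--Pillay theorem on PAC structures (or, more directly, by the fact that a bounded PAC field has simple theory, together with the independence theorem in such fields) the theory of $C$ as a pure field is simple. The key structural input connecting $(K,\partial)$ to $C$ is Theorem \ref{fgfinite}: $K$ is a finite extension of $C$ as a pure field, of degree at most $e$, and in fact the whole $\fg$-field structure on $K$ is recovered from $C$ together with this finite extension data (a copy of $K$ inside $\left(C^{\sep}\right)^{\text{alg}}$ and the action).

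The steps, in order, would be: (1) Fix a monster model $(K,\partial)$ of $\fg$-DCF and set $C=K^{\fg}$; note $C$ is PAC and bounded by Corollary \ref{bounded}, and $[K:C]\le e$ by Theorem \ref{fgfinite}. (2) Show that $(K,\partial)$, as a structure, is interpretable in $C$ (with parameters): the finitely many coordinates of $K$ over $C$, the multiplication, and the operators $\partial_1,\dots,\partial_{e-1}$ are all definable over $C$ once one fixes a basis of $K$ over $C$; conversely $C$ is definable in $(K,\partial)$ as $\{x : \partial(x)=x\}$. Hence $(K,\partial)$ and $C$ are bi-interpretable (after naming finitely many parameters). (3) Invoke that simplicity is preserved under bi-interpretability (and is unaffected by naming finitely many constants), and that $\theo(C)$ is simple because $C$ is a bounded PAC field — this is the theorem of Chatzidakis (building on Hrushovski), that a perfect bounded PAC field is simple, extended to the non-perfect bounded PAC case as needed (here one may need the imperfect PAC analysis, or reduce to the perfect hull, which is again bounded PAC and over which $K$ is still finite). (4) Conclude $\fg$-DCF is simple.

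An alternative, more self-contained route avoids interpretability and instead verifies the independence theorem directly for $\fg$-DCF over algebraically closed substructures, using: non-forking independence in $(K,\partial)$ is controlled by algebraic independence of the ambient fields together with linear disjointness, exactly as for $G$-TCF in \cite{HK3}; the amalgamation of $\fg$-field extensions is handled by Lemma \ref{tenfra}(1) (the tensor product carries a canonical $\fg$-structure) and Proposition \ref{mainprop}/Theorem \ref{gdcfexists} (the amalgam embeds into a model of $\fg$-DCF); and the independence theorem for the PAC field $C$ lifts through the finite extension $C\subseteq K$. This mirrors the proof strategy of \cite{HK3} for actions of finite groups and of \cite{HK} in the truncated case.

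The main obstacle I expect is the non-perfect PAC issue in step (3): the classical simplicity results for PAC fields are cleanest for perfect bounded PAC fields, whereas here $C=K^{\fg}$ need not be perfect (indeed, by the example after Theorem \ref{ncbound}, $C\subseteq K$ can be purely inseparable when $\fg=\ga[1]$, so $C$ has positive imperfection degree). So the real work is to either (a) cite/prove that \emph{bounded} PAC fields of finite imperfection degree have simple theory, or (b) pass to the perfect hull $C^{1/p^\infty}$, check it remains bounded (immediate, since $\gal$ is unchanged under purely inseparable extension) and PAC, note $K\cdot C^{1/p^\infty}$ is still a finite extension of $C^{1/p^\infty}$ and still carries the $\fg$-structure, and run the argument there, finally transferring simplicity back down along the purely inseparable (hence "invisible" to Galois theory and to the relevant model-theoretic invariants) extension. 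I would expect option (b), combined with the already-established boundedness, to give the cleanest complete argument.
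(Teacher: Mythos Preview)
Your proposal is correct and follows essentially the same route as the paper: bi-interpretability of $(K,\partial)$ with the pure field $C=K^{\fg}$ via Theorem \ref{fgfinite}, then PAC plus boundedness of $C$ via Corollary \ref{bounded}, then simplicity of bounded PAC fields, then transfer of simplicity across bi-interpretability.

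The only divergence is your final worry about the non-perfect case. The paper does not treat this as an obstacle at all: it simply invokes the general fact (cited as \cite[Fact 2.6.7]{Kimsim}) that the theory of any bounded PAC field is simple, with no perfectness hypothesis. So your workaround (b) via the perfect hull is unnecessary, and your alternative self-contained route through a direct verification of the independence theorem, while viable, is considerably more work than what is actually needed.
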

\begin{proof}
For any model $(K,\mathfrak{g})$ of $\mathfrak{g}$-DCF, by Theorem \ref{fgfinite}, the theory of $(K,\mathfrak{g})$ is bi-interpretable with the theory of the pure field $K^{\fg}$ (the description of this bi-interpretation given in \cite[Remark 2.3]{HK3} works here as well). By Corollary \ref{bounded}, the field $K^{\fg}$ is PAC and bounded. Therefore, by \cite[Fact 2.6.7]{Kimsim}, the theory of the field $K^{\fg}$ is simple.
\end{proof}
\begin{remark}\label{ssrem}
We describe here some special cases and comment on possible improvements and generalizations.
\begin{enumerate}
\item If the group scheme $\mathfrak{g}$ is infinitesimal (see Example \ref{ex1}(2)), then the theory $\mathfrak{g}$-DCF is stable and it coincides with one of the theories considered in \cite{HK}.

\item If the group scheme $\mathfrak{g}=G_{\ka}$ is constant ($G$ is a finite group), then we have
$$\mathfrak{g}-\mathrm{DCF}=G-\mathrm{TCF},$$
where $G$-TCF is the theory considered in \cite{HK3}. In particular, this theory is supersimple of finite rank $e$ coinciding with the order of $G$.

\item Suppose that the group scheme $\fg$ is neither infinitesimal nor  \'{e}tale. It means that $\fg\neq \fg^0$ and $\fg\neq \fg_0$, where $\fg^0$ and $\fg_0$ come from the exact sequence considered in the beginning of the Introduction. In this case, using \cite[Fact 2.6.7]{Kimsim} again, the theory $\mathfrak{g}$-DCF is strictly simple (that is: simple, not stable, and not supersimple), since the PAC field $K^{\fg}$ is neither separably closed nor perfect.

\item We consider in Section \ref{secapp} finite group schemes $\fg$ of some special kind, which are of the type described in Item $(3)$ above. For such finite group schemes, the theory $\fg-\mathrm{DCF}$ considered in a certain expanded language has a finer description, which is obtained using the results from  \cite{Hoff3} (see Corollary \ref{danielcor}).

\item As we pointed out in Section \ref{secprol}, the $\fg$-rings considered in this paper are a very special case of the rings with iterative operators considered in \cite{MS}. One may wonder whether our main results (Theorems \ref{gdcfexists} and \ref{simple}) could be generalized to the context considered in \cite{MS}. Unfortunately, it looks impossible at this moment, since, for example, actions of locally finite groups on rings constitute still quite a special case of the situation from \cite{MS}. But the theory of actions of such groups on fields need \emph{not} be companionable, as a recent work of Beyarslan and the second author shows \cite{BK2}. More precisely, the case of Abelian torsion groups is considered in  \cite{BK2} and a rather unexpected algebraic condition on such groups is given in \cite[Theorem 1.1]{BK2}, which is equivalent to the companionability the theory mentioned above. For the general case of locally finite groups, it is even unclear what should be conjecturally taken for this algebraic condition.
\end{enumerate}
\end{remark}

\section{Group actions on fields of finite imperfection degree}\label{secapp}
In this section, we use the results from the previous sections to show the existence and to describe some properties of a model complete theory of actions of a fixed finite group on fields of finite imperfection degree. Let us describe first the general setting of the model-theoretic dynamics from \cite{Hoff3}, which will be used here. We start from an $L$-theory $T$ having a model companion $T^{\mc}$ and we assume that $T^{\mc}$ has elimination of quantifiers. For a fixed group $G$, we expand the language $L$ to the language $L_G$ by adding unary function symbols for elements of $G$. Then, $T_G$ is the obvious $L_G$-theory of $G$-actions by automorphisms on models of $T$, and it is an interesting question whether a model companion of $T_G$ exists (if it exists, it is denoted by $T_G^{\mc}$). In the case of $G=\Zz$, the above question is about the existence of an axiomatization of the theory of \emph{generic automorphism} of models of $T$ (see \cite{ChPi}). It is well-known (see e.g. \cite{acfa1}) that if $T$ is the theory of fields, then the theory $T_{\Zz}^{\mc}$(=ACFA) exists. In \cite{Cha02}, Chatzidakis shows the existence of the theory $T_{\Zz}^{\mc}$, where $T$ is the theory of separably closed fields of a fixed imperfection degree in the language of fields expanded by symbols for $p$-basis and $\lambda$-functions.

If $G$ is a finite group and $T$ is the theory of fields, then the theory $T_G^{\mc}$ exists (see \cite{Sjo} and \cite{HK3}); this theory was denoted by $G$-TCF in \cite{HK3}. It is a good moment now to warn the reader that the theory $T_G^{\mc}$ usually does \emph{not} imply the theory $T^{\mc}$: for example the models of $G$-TCF are not algebraically (or even separably) closed for a non-trivial finite group $G$ (see \cite{HK3}). The main aim of this section is to show a variant of Chatzidakis' existence result from \cite{Cha02} for a finite group $G$ replacing the infinite cyclic group, which also gives one more example of the class of theories satisfying the conditions from \cite{Hoff3}.
To achieve this aim, we do not follow the methods from \cite{Cha02}, but we provide a new argument using Theorem \ref{gdcfexists}.

We start with describing our version of the general set-up from \cite{Hoff3}. Let us fix a positive integer $e$ (it will \emph{not} play the role of the number $e$ from the previous sections!) and a prime number $p$. For the notion of a $p$-basis, the reader may consult \cite[Section 1.5]{Cha02}. The \emph{imperfection degree} of a field $K$ of characteristic $p$ is the cardinality of its $p$-basis. We define the language $L$ as the language of fields with the extra constant symbols $b=\{b_1,\ldots,b_{e}\}$ and the extra unary function symbols $\lambda=\{\lambda_i\;|\;i\in[p]^{e}\}$, where $[p]:=\{0,\ldots,p-1\}$. We set as $T$ the $L$-theory $\mathrm{SF}_{p,e}$, which is such that its models $(K;b,\lambda)$ satisfy the following:
\begin{itemize}
  \item $K$ is a field of characteristic $p$;
  \item the set $\{b_1,\ldots,b_{e}\}$ is a $p$-basis of $K$;
  \item for each $x\in K$, we have:
  $$x=\sum\limits_{i\in[p]^{e}}\lambda_i(x)^pb^i.$$
%   ($\lambda$-functions).
\end{itemize}
It is well-known that the model companion $T^{\mc}$ is the theory of separably closed fields which are models of $\mathrm{SF}_{p,e}$; this theory is denoted by $\mathrm{SCF}_{p,e}$. The theory $\mathrm{SCF}_{p,e}$ is strictly stable and has quantifier elimination (Theorem 2.3 in \cite{Del1}), so we are in the general situation from \cite{Hoff3}.
%and eliminates imaginaries (Theorem 2.12 in \cite{Del1}).

We describe now how to add dynamics to the  theory $T$.  Let $G$ be a finite group of order $e'$.
As explained above, $L_G$ denotes the language $L$ expanded by unary function symbols $\{\sigma_g\;|\;g\in G\}$ and $T_G=(\mathrm{SF}_{p,e})_G$ is the theory of group actions of $G$ on models of $\mathrm{SF}_{p,e}$ by $L$-automorphisms. To show the existence of the theory $T_G^{\mc}$,
we will use the theory $\fg-\mathrm{DCF}$ (see Theorem \ref{gdcfexists}) for $\ka=\Ff_p$ and the following finite group scheme:
$$\mathfrak{g}:=\ga^{e}[1]\times G_{\Ff_p}.$$
For the rest of this section, $\fg$ denotes the finite group scheme defined above, so we diverge again from the notation of the previous sections, since $e$ is not anymore the order of $\fg$ (which is now $p^e+e'$).

By Example \ref{ex1}(3), a $\fg$-field $(K,\partial)$ is the same as a field $K$ of characteristic $p$ with derivations $D_1,\ldots,D_e$ and with an action of the group $G$ by field automorphisms on $K$ such that the following conditions hold:
\begin{itemize}
\item the derivations $D_1,\ldots,D_e$ commute with each other;

\item for each $i\leqslant e$, we have $D_i^{(p)}=0$ (the composition of $D_i$ with itself $p$ times);

\item the action of $G$ commutes with the derivations $D_1,\ldots,D_e$.
\end{itemize}
Let us define:
$$C_K:=K^{\ga^{e}[1]}=\ker\left(D_1\right)\cap \ldots \cap \ker\left(D_e\right).$$
We clearly have $K^p\subseteq C_K$ and the next result shows that there is a $\fg$-field extension of $K$ where the equality holds, which will be crucial in the proof of Theorem \ref{appthm} (our aim here).
\begin{lemma}\label{lemma:strict.extension}
For any $\fg$-field $K$, there is a $\fg$-field extension $K\subseteq M$  such that $C_M=M^p$.
\end{lemma}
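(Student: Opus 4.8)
The plan is to build $M$ as a direct limit of $\fg$-field extensions, at each stage "adjoining a $p$-th root of a constant which is not already a $p$-th power", and then to argue that the limit has the desired property. Concretely, suppose $a \in C_K \setminus K^p$, so $a \in \ker(D_1) \cap \cdots \cap \ker(D_e)$ and $a$ is not a $p$-th power in $K$. I want to produce a $\fg$-field extension $K \subseteq K'$ in which $a$ becomes a $p$-th power while no \emph{new} constants outside $(K')^p$ get created in an uncontrolled way; iterating (transfinitely, over a well-ordering of the relevant set of "missing $p$-th roots", and taking unions at limit stages, which is harmless since $\fg$ is finite and the $\fg$-structure passes to unions of chains) and then taking a further limit over $\omega$ produces $M$ with $C_M = M^p$.

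The key step is the one-step extension. Given $a \in C_K \setminus K^p$, consider $K' := K(a^{1/p})$, a purely inseparable extension of degree $p$ (it is proper since $a \notin K^p$). I must extend the whole $\fg$-structure — the commuting derivations $D_1,\dots,D_e$ with $D_i^{(p)}=0$, and the commuting $G$-action — to $K'$. For the $G$-action: since $g(a) \in C_K \setminus K^p$ as well (the $G$-action commutes with the $D_i$ and permutes $C_K$, and $g$ preserves $K^p$), one checks $g$ extends uniquely to an automorphism of $K'$ sending $a^{1/p}$ to $g(a)^{1/p}$, the unique $p$-th root of $g(a)$ in $K'^{\mathrm{perf}}$; uniqueness of $p$-th roots in characteristic $p$ forces these extended automorphisms to again satisfy the group relations and to commute. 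For the derivations: each $D_i$ extends uniquely to $K'$ by the formula $D_i(a^{1/p}) := 0$; this is forced because $D_i(a) = 0$ means $D_i$ kills $a$, hence must kill its unique $p$-th root (differentiating $x^p = a$ gives $0 = D_i(a) = p x^{p-1} D_i(x) = 0$ in char $p$, so the value $D_i(a^{1/p})$ is a genuine free choice, and the choice $0$ is the one that is forced by iterativity — indeed it is the only choice making $D_i^{(p)}=0$ compatible, and in fact any derivation of $K$ extends uniquely to a purely inseparable extension). With $D_i(a^{1/p})=0$ the relations $D_i^{(p)}=0$, $D_iD_j=D_jD_i$, and commutation with $G$ all persist, because they hold on $K$ and on the new generator both sides vanish. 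This makes $K'$ a $\fg$-field extension of $K$, and now $a \in (K')^p$.

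After exhausting all elements of $C_K \setminus K^p$ in this way we obtain a $\fg$-field $K_1 \supseteq K$; but adjoining these roots may have enlarged the field, so possibly $C_{K_1} \supsetneq K_1^p$ again. So I iterate: build $K \subseteq K_1 \subseteq K_2 \subseteq \cdots$ and set $M := \bigcup_n K_n$. The $\fg$-structure on $M$ is the union of the $\fg$-structures, which is well-defined since $\fg$ is finite (the operator $\partial$ is determined by finitely many unary functions, and the defining identities are quantifier-free, hence preserved under unions of chains). Finally I check $C_M = M^p$: if $a \in C_M$, then $a \in C_{K_n}$ for some $n$, hence $a \in K_{n+1}^p \subseteq M^p$; the reverse inclusion $M^p \subseteq C_M$ is immediate since $D_i(x^p) = 0$ always.

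The main obstacle I anticipate is not any single computation but making sure the one-step extension is genuinely a $\fg$-field — i.e. that the ad hoc choices $D_i(a^{1/p}) = 0$ and $g(a^{1/p}) = g(a)^{1/p}$ really do assemble into a valid group scheme action of $\fg = \ga^e[1] \times G_{\Ff_p}$, not merely into a tuple of operators satisfying the Leibniz rule. This is where I would want to invoke the structural results: either note that a derivation (resp. automorphism) of a field extends uniquely along a purely inseparable extension (a standard fact), so \emph{all} the structure extends automatically and uniquely, and uniqueness then propagates the iterativity identities of Remark~\ref{remiter}(1) for free; or, if one prefers a cleaner packaging, present $K'$ as a quotient of a tensor construction and invoke Lemma~\ref{tenfra}. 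I would lean on the uniqueness-of-extension argument, since it dispatches the commutation relations and the $\fg$-iterativity conditions simultaneously and with no bookkeeping.
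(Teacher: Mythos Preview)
There is a genuine gap in the one-step extension. You adjoin only $a^{1/p}$ and then assert that each $g\in G$ extends to an automorphism of $K'=K(a^{1/p})$ sending $a^{1/p}$ to $g(a)^{1/p}$. But $g(a)^{1/p}$ need not lie in $K'$: one computes $(K')^p=K^p(a)$, and in general $g(a)\notin K^p(a)$ when $a$ and $g(a)$ are $p$-independent over $K^p$. The ``standard fact'' you invoke---that an automorphism of $K$ extends uniquely along a purely inseparable extension---is true for the passage to $K^{\mathrm{perf}}$, but the extended automorphism need not stabilise a chosen intermediate field such as $K(a^{1/p})$. (The parallel claim for derivations is also off: derivations of $K$ do not extend \emph{uniquely} to $K(a^{1/p})$; your explicit choice $D_i(a^{1/p})=0$ is fine, but uniqueness cannot be used to transport the identities.)

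The paper's proof repairs exactly this point: rather than adjoining a single $p$-th root, it first observes that $G\cdot t\subset C_K\setminus K^p$, chooses a $p$-basis $\{t_1,\ldots,t_k\}$ of $K^p(G\cdot t)$ over $K^p$, and sets $K_1:=K(t_1^{1/p},\ldots,t_k^{1/p})$. Because $K^p(G\cdot t)$ is $G$-stable, each $g\cdot t_j$ lies in $K^p[t_1,\ldots,t_k]$, hence $(g\cdot t_j)^{1/p}\in K_1$, and the formula $g\cdot t_j^{1/p}:=(g\cdot t_j)^{1/p}$ now lands in $K_1$. After this correction the rest of your outline---setting $D_i(t_j^{1/p})=0$, checking the $\fg$-identities on generators, iterating, and taking the union---matches the paper's argument.
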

\begin{proof}
If $C_K=K^p$, then there is nothing to do, so let us take $t\in C_K\setminus K^p$. Then, we have:
$$G\cdot t\subset C_K\setminus K^p.$$
We take a $p$-basis $\{t_1=t,\ldots,t_k\}$ of $K^p(G\cdot t)$ over $K^p$ and we define:
$$K_1:=K\left(t_1^{1/p},\ldots,t_k^{1/p}\right).$$
By setting for any $i\leqslant e$ and $j\leqslant k$:
$$D_i\left(t_j^{1/p}\right):=0,$$
$K_1$ becomes a $\ga^e[1]$-field extension of $K$. For any $g\in G$ and any $j\leqslant k$, we define:
$$g\cdot t_j^{1/p}:=(g\cdot t_j)^{1/p},$$
which gives a unique extension of the $G$-action from $K$ to $K_1$. This makes $K_1$ a $\fg$-field extension of $K$ and by repeating this process, we can define $M$ as the union of a tower of $\fg$-field extensions of $K$.
\end{proof}
For a $\fg$-field $K$, it will be crucial to find a $p$-basis of $K$ in the subfield of $G$-invariants $K^G$, which is allowed by the next result.
\begin{lemma}\label{pbasisconst}
Suppose that the group $G$ acts on a field $K$ of characteristic $p$ and of imperfection degree $e$. Then, there is a $p$-basis of $K$ in $K^G$.
\end{lemma}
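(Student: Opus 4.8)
The plan is to reduce to the known fact that a field of characteristic $p$ and finite imperfection degree $e$ admits \emph{some} $p$-basis, and then to ``average'' such a basis over the finite group $G$ to push it inside $K^G$. The key point is that the property of being a $p$-basis is detected by the single numerical invariant $[K : K^p] = p^e$, together with $p$-independence, so it suffices to find $e$ elements of $K^G$ that are $p$-independent in $K$.

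First I would fix an arbitrary $p$-basis $c_1,\ldots,c_e$ of $K$; this exists since the imperfection degree is $e$. Consider the subfield $K^G \subseteq K$. Since $G$ acts by field automorphisms, $G$ fixes $K^p$ pointwise only on $(K^G)^p = K^p \cap K^G$, but in any case $K^p \subseteq K$ is a purely inseparable extension of degree $p^e$, and $K$ is spanned over $K^p$ by the monomials $c^i = c_1^{i_1}\cdots c_e^{i_e}$ for $i \in [p]^e$. The hard part is to show that $K^G$ is not ``too small'' relative to $K^p$ — concretely, that $K = K^p \cdot K^G$, equivalently that $K^G$ already contains a full $p$-basis. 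For this I would argue that $K^G \subseteq K$ is a separable extension (as $G$ acts by automorphisms, $K/K^G$ is Galois of degree $|G|$, hence separable), while $K^p \subseteq K$ is purely inseparable; therefore $K^G$ and $K^p$ are linearly disjoint over $K^p \cap K^G = (K^G)^p$, and counting degrees gives $[K^G : (K^G)^p] = [K : K^p] = p^e$. Thus $K^G$ itself has imperfection degree $e$, so any $p$-basis $b_1,\ldots,b_e$ of $K^G$ consists of elements that are $p$-independent over $(K^G)^p$; by the linear disjointness just established, the monomials $b^i$, $i\in[p]^e$, remain linearly independent over $K^p$ in $K$, and since there are $p^e = [K:K^p]$ of them they form a $K^p$-basis of $K$. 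Hence $\{b_1,\ldots,b_e\}$ is a $p$-basis of $K$ lying in $K^G$, as desired.

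The main obstacle I anticipate is justifying the linear disjointness / degree count cleanly: one must be careful that $K/K^G$ is genuinely a finite Galois (in particular separable) extension — this uses that $G$ is finite and acts faithfully, or at least that $K^G$ is its fixed field of the image group — and that ``separable'' and ``purely inseparable'' subextensions of a common extension are always linearly disjoint over their intersection, combined with multiplicativity of the inseparability degree. An alternative, more hands-on route avoiding Galois theory would be to take the given $p$-basis $c_1,\ldots,c_e$ and observe that the $p^e$ monomials $(\prod_{g\in G} g\cdot c_j)$ and more general symmetric expressions span enough of $K$ over $K^p$; but this is messier, and the linear-disjointness argument is both shorter and conceptually transparent, so that is the one I would write up.
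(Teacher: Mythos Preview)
Your argument is correct and follows essentially the same route as the paper: show that $K^G$ has imperfection degree $e$, establish that $K^G$ and $K^p$ are linearly disjoint over $(K^G)^p$ (one extension being purely inseparable, the other Galois), and conclude that any $p$-basis of $K^G$ is a $p$-basis of $K$. The only cosmetic difference is in obtaining $[K^G:(K^G)^p]=[K:K^p]$: the paper reads this off directly from the Frobenius isomorphism (which gives $[K:K^G]=[K^p:(K^G)^p]$ and hence the equality by multiplicativity), whereas you deduce it from linear disjointness together with $K=K^p\cdot K^G$; both are fine, and the opening remarks about ``averaging'' a $p$-basis are not actually used.
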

\begin{proof}
Let us denote $K_0:=K^G$ and consider the following commutative diagram of field extensions:
\begin{equation*}
\xymatrix{  & K   &   \\
K^p \ar[ru]^{} &  &  K_0 \ar[lu]^{} \\
  & K_0^p. \ar[lu]^{}\ar[ru]^{}  &     }
\end{equation*}
By applying the Frobenius map, we have $[K:K_0]=[K^p:K_0^p]$. Therefore, we obtain:
$$[K:K^p]=[K_0:K_0^p],$$
so the imperfection degree of $K_0$ is $e$ as well. Since the field extension $K_0^p\subseteq K_0$ is purely inseparable and the extension $K_0^p\subseteq K^p$ is Galois (because the extension $K_0\subseteq K$ is Galois), we obtain that $K_0$ is linearly disjoint from $K^p$ over $K_0^p$. The linear disjointness and the fact that the imperfection degree of $K_0$ is finite and coincides with the imperfection degree of $K_0$ imply that any $p$-basis of $K_0$ is a $p$-basis of $K$ as well.
\end{proof}
\begin{prop}\label{gdcfmod}
If $K$ is a model of $\fg-\mathrm{DCF}$, then $C_K=K^p$ and the imperfection degree of $K$ is $e$.
\end{prop}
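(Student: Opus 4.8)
The plan is to exploit that models of $\fg$-DCF are existentially closed, together with the two lemmas just proved (Lemma \ref{lemma:strict.extension} and Lemma \ref{pbasisconst}) and the general geometric axioms from Theorem \ref{gdcfexists}. Two things must be checked: first, that $C_K = K^p$; second, that the imperfection degree of $K$ equals $e$. Both should follow from constructing suitable $\fg$-field extensions of $K$ and invoking existential closedness via first-order formulas.

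\medskip

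For the equality $C_K = K^p$: we always have $K^p \subseteq C_K$, so suppose toward a contradiction that there is $t \in C_K \setminus K^p$. By Lemma \ref{lemma:strict.extension}, there is a $\fg$-field extension $K \subseteq M$ with $C_M = M^p$. Inside $M$ the element $t$ now lies in $C_M = M^p$, so there is $s \in M$ with $s^p = t$; moreover $s \notin K$ since $t \notin K^p$. I would then write down the (quantifier-free, over $K$) statement ``there exists $x$ with $x^p = t$ and $\partial_i(x) = 0$ for all $i$'' — this is expressible in $L_\fg$ by Remark \ref{easy}(2). It is realized in the $\fg$-field extension $M$ (by $s$, whose derivatives vanish because $t = s^p \in C_M$ forces... — actually one must be a touch careful: $s^p = t \in C_M = M^p$ does not by itself kill $D_i(s)$). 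The cleaner route: when building $K_1$ in the proof of Lemma \ref{lemma:strict.extension} one explicitly sets $D_i(t_j^{1/p}) = 0$, so along the tower a $p$-th root of $t$ with all derivatives zero genuinely exists in $M$. Hence existential closedness of $K$ gives such a root $s' \in K$, contradicting $t \notin K^p$. So $C_K = K^p$.

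\medskip

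For the imperfection degree: since $\fg = \ga^e[1] \times G_{\Ff_p}$, the field $K$ carries $e$ commuting derivations $D_1,\dots,D_e$ each with $D_i^{(p)} = 0$, and $C_K = \ker D_1 \cap \dots \cap \ker D_e = K^p$ by the previous paragraph. This is exactly the situation of an iterated truncated Hasse–Schmidt / finite imperfection degree configuration: one shows $[K : K^p] = p^e$ by exhibiting a $p$-independent $e$-tuple. Concretely, the canonical $\fg$-structure and the geometric axioms let me ask for elements $x_1,\dots,x_e \in K$ with $D_i(x_j) = \delta_{ij}$ (a first-order condition, realizable in a $\fg$-field extension — e.g. take a rational function field and extend the structure, or use Example \ref{standard}); then $x_1,\dots,x_e$ are $p$-independent, so the imperfection degree is $\geq e$. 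For the reverse bound, $[K : C_K] = [K : K^p] \leq e'' := $ the order of the infinitesimal part $\ga^e[1]$, which is $p^e$, by Theorem \ref{fgfinite} applied to the subgroup scheme $\ga^e[1] \trianglelefteq \fg$ (using Lemma \ref{brick} to see $K$ is a $\ga^e[1]$-field with $K^{\ga^e[1]} = C_K$). Combining, $[K:K^p] = p^e$, i.e. the imperfection degree is exactly $e$.

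\medskip

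I expect the main obstacle to be the careful bookkeeping in the first part — making sure that the element produced in the $\fg$-field extension $M$ really satisfies a quantifier-free $L_\fg$-formula over $K$ that forces it to be a $p$-th root of $t$ lying in the constants, so that existential closedness applies cleanly. Once the existential statement is correctly isolated, the rest is a matter of assembling Theorem \ref{fgfinite}, Lemma \ref{brick}, and the explicit description of $\fg$ as $\ga^e[1] \times G_{\Ff_p}$. The bound $[K : K^p] = p^e$ is then just the equality case in the chain $K^p \subseteq C_K \subseteq K$ with the outer index at most $p^e$ and the lower bound from $p$-independent elements.
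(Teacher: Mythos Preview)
Your proposal is correct and mirrors the paper's proof: Lemma \ref{lemma:strict.extension} plus existential closedness for $C_K = K^p$; then the order bound $[K:C_K]\leqslant p^e$ from the infinitesimal part $\ga^e[1]$ together with a rational-function-field extension carrying elements $x_j$ with $D_i(x_j)=\delta^j_i$ for the imperfection degree (the paper makes this extension explicit as $\mathrm{Frac}(K\otimes_{\Ff_p}\Ff_p(X_1,\ldots,X_e))$, followed by another application of Lemma \ref{lemma:strict.extension}). One unnecessary detour in your first step: you do not need the $p$-th root of $t$ to land in the constants of $M$ --- the bare ring-language formula $\exists s\,(s^p=t)$ already suffices, since $t\in C_K\subseteq C_M=M^p$ gives a realization in $M$, and existential closedness then puts $t$ in $K^p$.
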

\begin{proof}
Since $K$ is an existentially closed $\fg$-field, Lemma \ref{lemma:strict.extension} implies that $C_K=K^p$. By \cite[Corollary 3.21]{HK3}, we get that $[K:C_K]\leqslant p^e$. Therefore (by the existential closedness again), it is enough to find a $\fg$-field extension $K\subseteq N$ such that the imperfection degree of $N$ is $e$.

Let us consider the field of rational functions $\Ff_p(X_1,\ldots,X_e)$ with the standard $\ga^e[1]$-field structure given by $D_i(X_j)=\delta^j_i$ (the Kronecker delta) and with the trivial action of $G$. Then, $\Ff_p(X_1,\ldots,X_e)$ becomes a $\fg$-field of imperfection degree $e$ and such that:
$$C_{\Ff_p(X_1,\ldots,X_e)}=\Ff_p\left(X_1^p,\ldots,X_e^p\right)=\left(\Ff_p(X_1,\ldots,X_e)\right)^p.$$
We consider now the following field:
$$M:=\mathrm{Frac}\left(K\otimes_{\Ff_p}\Ff_p(X_1,\ldots,X_e)\right).$$
By Lemma \ref{tenfra} (both of the items), $M$ is a $\fg$-field and it is naturally a $\fg$-field extension of $K$ and $\Ff_p(X_1,\ldots,X_e)$. Let $N$ be a $\fg$-field extension of $M$ provided by Lemma \ref{lemma:strict.extension}, so $N^p=C_N$. By \cite[Lemma 3.22]{HK3}, the field extension $\Ff_p(X_1,\ldots,X_e)\subseteq N$ is separable, hence the imperfection degree of $N$ is at least $e$.  Since $N^p=C_N$, we obtain (using \cite[Corollary 3.21]{HK3} again) that the imperfection degree of $N$ is exactly $e$.
\end{proof}
For our main bi-interpretability result, we need to consider the theory $\fg$-DCF in an expanded language. We set the following.
\begin{enumerate}
\item Let $L_{\fg,b}$ be the language $L_{\fg}$ (see Section \ref{secfo}) expanded by the constant symbols $b_1,\ldots,b_e$.

\item Let $\fg-\mathrm{DF}_b$ be the $L_{\fg,b}$-theory of $\fg$-fields such that:
\begin{enumerate}
\item $\{b_1,\ldots,b_e\}$ is a $p$-basis;

\item $\{b_1,\ldots,b_e\}$ is contained in the invariants of the action of $G$;

\item for all $i,j\leqslant e$, we have:
    $$D_i(b_j)=\delta^j_i\ \ \ \ \ \ \text{(the Kronecker delta).}$$
\end{enumerate}

\item We define the $L_{\fg,b}$-theory $\fg-\mathrm{DCF}_b$ as the theory $\fg-\mathrm{DCF}$ with the same extra conditions on the fixed $p$-basis and the $G$-invariants as in Item $(2)$ above.
\end{enumerate}
The next lemma shows that the theory $\fg-\mathrm{DF}_b$ has the expected properties.
\begin{lemma}\label{lemdfp}
We have the following.
\begin{enumerate}
\item If $K$ is a model of $\fg-\mathrm{DF}_b$, then the imperfection degree of $K$ is $e$ and $C_K=K^p$.

\item The theory $\fg-\mathrm{DF}_b$ is bi-interpretable with the theory $T_G=(\mathrm{SF}_{p,e})_G$.
\end{enumerate}
\end{lemma}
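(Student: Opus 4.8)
The plan is to prove the two items more or less independently, reusing the structural results already established. For Item (1), suppose $K$ is a model of $\fg-\mathrm{DF}_b$. First I would observe that conditions (2b) and (2c) force $\{b_1,\ldots,b_e\}$ to be a $p$-independent family inside $C_K$: indeed, applying $D_i$ to any hypothetical nontrivial $p$-monomial relation among the $b_j$ and using $D_i(b_j)=\delta^j_i$ together with $K^p\subseteq C_K=\bigcap_j\ker(D_j)$ gives a contradiction, exactly as in the standard argument that $\lambda$-functions detect $p$-dependence. Since (2a) already says $\{b_1,\ldots,b_e\}$ is a $p$-basis, the imperfection degree of $K$ is $e$. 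For the equality $C_K=K^p$: the inclusion $K^p\subseteq C_K$ is automatic, and for the reverse inclusion I would use that $\{b_1,\ldots,b_e\}$ is a $p$-basis of $K$, so every $x\in K$ can be written $x=\sum_{i\in[p]^e}c_i^p\, b^i$ with $c_i\in K$; applying the commuting operators $D_1,\ldots,D_e$ and using $D_j(c_i^p)=0$, $D_j(b^i)$ computable from $D_j(b_k)=\delta^k_j$, one gets that $x\in C_K$ forces all exponents $i$ with $c_i\neq 0$ to be $0$, i.e. $x=c_0^p\in K^p$. (Alternatively, and perhaps more cleanly, one can cite \cite[Corollary 3.21]{HK3} to bound $[K:C_K]\leqslant p^e$ and note that $[K:K^p]=p^e$ already, forcing $C_K=K^p$.)

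For Item (2), I would exhibit the bi-interpretation explicitly. In one direction, given a model $(K,\partial)$ of $\fg-\mathrm{DF}_b$ — that is, a field of characteristic $p$ with commuting derivations $D_1,\ldots,D_e$ satisfying $D_i^{(p)}=0$, a $G$-action commuting with them, and a distinguished $p$-basis $b_1,\ldots,b_e$ in $K^G$ with $D_i(b_j)=\delta^j_i$ — I would define the $\lambda$-functions by the unique expansion $x=\sum_{i\in[p]^e}\lambda_i(x)^p b^i$; by Item (1) this $p$-basis is legitimate, so $(K;b,\lambda)$ is a model of $\mathrm{SF}_{p,e}$, and carrying along the same $G$-action gives a model of $T_G=(\mathrm{SF}_{p,e})_G$ (the $G$-action fixes the $b_j$, hence commutes with the $\lambda$-functions, which are $L$-definable from the $b_j$). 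Conversely, given a model $(K;b,\lambda,(\sigma_g)_{g\in G})$ of $T_G$, I would recover the derivations via the standard formula expressing a truncated iterative Hasse--Schmidt derivation / the $D_i$ in terms of the $\lambda$-functions and the $p$-basis (this is the inverse of the previous construction — essentially $D_i$ is read off from the degree-one part of the $\lambda$-expansion, or equivalently one writes down the classical identities relating iterative derivations with $D_i^{(p)}=0$ to $p$-bases and $\lambda$-functions, as in \cite{HK} and \cite{Cha02}). One then checks the $D_i$ commute, satisfy $D_i^{(p)}=0$, commute with the $G$-action (because the $G$-action fixes the $b_j$ and is an $L$-automorphism, hence commutes with the $\lambda$-functions), and $D_i(b_j)=\delta^j_i$. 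The two constructions are quantifier-free definable in each other's language and are mutually inverse, giving the bi-interpretation; it is even a definitional expansion / "reduct-by-definitions" situation once the $p$-basis is named, so no quotient is needed and the interpretations are trivially on one element.

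The main obstacle — or rather the only point requiring genuine care — is verifying that the passage between the derivation data $(D_1,\ldots,D_e)$ and the $\lambda$-function data is a genuine mutual quantifier-free interpretation, i.e. that the defining formulas really are inverse to each other and land inside the correct theories. This amounts to the bookkeeping identity, for a field with $p$-basis $\{b_1,\ldots,b_e\}$, between the $m=1$, $f=e$ iterative Hasse--Schmidt system with nilpotent $D_i$ and the $\lambda_i$, $i\in[p]^e$: it is classical (see \cite{Cha02, HK}) but must be quoted precisely, and one must check the extra compatibility that the iterativity rule $D_i^{(p)}=0$ corresponds exactly to $\{b_1,\ldots,b_e\}$ being a $p$-basis (rather than merely $p$-independent), which is where Item (1) is used. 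Everything else — commutation of the $G$-action with the operators, fixing the $b_j$, first-order expressibility — is routine.
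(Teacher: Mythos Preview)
Your proposal is correct and follows essentially the paper's approach; Item~(2) is argued identically, and for $C_K=K^p$ in Item~(1) the paper uses the equivalent tower-of-kernels argument (set $C_i:=\bigcap_{j\leqslant i}\ker D_j$, note $b_i\in C_{i-1}\setminus C_i$ and $b_i^p\in C_i$, so $[K:C_K]\geqslant p^e=[K:K^p]$), which amounts to the alternative you mention. One small slip: the $b_i$ are not ``inside $C_K$'' since $D_i(b_i)=1\neq 0$, but this is harmless because you then invoke axiom~(2a) directly and your subsequent computation does not use that claim.
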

\begin{proof}
For the proof of Item $(1)$, the statement about the imperfection degree of $K$ is clear from the axiomatization of the theory $\fg-\mathrm{DF}_b$. To show that $C_K=K^p$, we consider for each $i\leqslant e$ the following subfield of $K$:
$$C_i:=\ker(D_1)\cap \ldots \cap \ker(D_i).$$
Then, we have the following tower of fields:
$$K^p\subseteq C_K=C_e\varsubsetneq C_{e-1} \varsubsetneq \ldots \varsubsetneq C_1 \varsubsetneq K,$$
where the properness of the inclusions is witnessed by $b_i\in C_{i-1}\setminus C_{i}$ (we set $C_0:=K$). Therefore, we get:
$$p^e=[K:K^p]\geqslant [K:C_K]\geqslant p^e,$$
which implies that $C_K=K^p$.

For the proof of Item $(2)$, if $(K;b,D_1,\ldots D_e,\sigma_g)_{g\in G}$ is a model of $\fg-\mathrm{DF}_b$, then $(K;b,\sigma_g)_{g\in G}$ obviously interprets a model of $T_G$, since $\lambda$-functions are definable in a field with a $p$-basis. If $(K;b,\lambda,\sigma_g)_{g\in G}$ is a model of $T_G$, then we forget about the $\lambda$-functions and define the derivations by the following formula (as in the proof of  Proposition \ref{gdcfmod}):
$$D_{j}(b_i)=\delta^j_i.$$
Then, it is standard that $(K;b,D_1,\ldots D_e,\sigma_g)_{g\in G}$ is a model of $\fg-\mathrm{DF}_b$. It is also clear that the two interpretations above are mutually inversive.
\end{proof}
The next result is crucial for the proof of Theorem \ref{appthm}.
\begin{prop}\label{finallemma}
The theory $\fg-\mathrm{DCF}_b$ is a model companion of the theory $\fg-\mathrm{DF}_b$.
\end{prop}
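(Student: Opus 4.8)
The plan is to show that $\fg\text{-}\mathrm{DCF}_b$ is model complete as an $L_{\fg,b}$-theory and that every model of $\fg\text{-}\mathrm{DF}_b$ embeds in a model of $\fg\text{-}\mathrm{DCF}_b$; together these give the model-companion property. Model completeness will follow from Theorem \ref{gdcfexists} with almost no extra work: a model of $\fg\text{-}\mathrm{DCF}_b$ is by definition a model of $\fg\text{-}\mathrm{DCF}$ together with a distinguished $p$-basis $b_1,\dots,b_e$ lying in the $G$-invariants and satisfying $D_i(b_j)=\delta^j_i$. Since $\fg\text{-}\mathrm{DCF}$ is model complete (it is the model companion of $\fg\text{-}\mathrm{DF}$ by Theorem \ref{gdcfexists}), any $L_{\fg,b}$-embedding between two models of $\fg\text{-}\mathrm{DCF}_b$ is in particular an $L_{\fg}$-embedding between existentially closed $\fg$-fields, hence elementary for $L_{\fg}$; the extra constants $b_i$ are interpreted compatibly on both sides, so the embedding is $L_{\fg,b}$-elementary. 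Thus $\fg\text{-}\mathrm{DCF}_b$ is model complete.

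The substantive part is the embedding statement: given any model $(K;b,D_1,\dots,D_e,\sigma_g)_{g\in G}$ of $\fg\text{-}\mathrm{DF}_b$, I must produce a $\fg$-field extension $K\subseteq K^*$ such that (i) $K^*$ is existentially closed as a $\fg$-field, and (ii) $b_1,\dots,b_e$ remains a $p$-basis of $K^*$, still lies in $(K^*)^G$, and still satisfies $D_i(b_j)=\delta^j_i$. The natural construction is to take a large $\fg$-field extension $K^*$ of $K$ that is a model of $\fg\text{-}\mathrm{DCF}$ (which exists since $\fg\text{-}\mathrm{DF}$ is inductive and has the amalgamation needed to build existentially closed extensions). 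Condition (iii) is automatic, since $D_i(b_j)=\delta^j_i$ is a quantifier-free fact preserved in any $\fg$-field extension, and likewise $b_i\in (K^*)^G$ is preserved. The one nontrivial requirement is that $\{b_1,\dots,b_e\}$ stays a $p$-basis of $K^*$, i.e. that the imperfection degree does not change and that the $b_i$ still generate; this is exactly where Proposition \ref{gdcfmod} enters. That proposition tells us $C_{K^*}=(K^*)^p$ and the imperfection degree of $K^*$ is $e$; combined with $K^p\subseteq C_K$ and the relations $D_i(b_j)=\delta^j_i$ (which force the $b_i$ to be $p$-independent in $K^*$, as in the tower-of-fields argument of Lemma \ref{lemdfp}(1)), we conclude $[K^*:(K^*)^p]=p^e$ with $\{b^i\ |\ i\in[p]^e\}$ spanning, so $\{b_1,\dots,b_e\}$ is indeed a $p$-basis of $K^*$. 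Hence $K^*$ is a model of $\fg\text{-}\mathrm{DCF}_b$ extending $K$.

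The main obstacle I anticipate is bookkeeping rather than conceptual: one must make sure that the existentially closed $\fg$-field $K^*$ built over $K$ can be chosen so that $b_1,\dots,b_e$ generate $K^*$ over $(K^*)^p$, not merely that the imperfection degree equals $e$. This is handled precisely by Proposition \ref{gdcfmod} (which gives $C_{K^*}=(K^*)^p$ outright) together with the elementary linear-algebra fact that $p$-independent elements in the smaller field whose number matches the imperfection degree of the larger field must form a $p$-basis there. Once this is in place, model completeness from Theorem \ref{gdcfexists} and the existence of extensions combine to give that $\fg\text{-}\mathrm{DCF}_b$ is a model companion of $\fg\text{-}\mathrm{DF}_b$.
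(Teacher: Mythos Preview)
Your proof is correct and follows the same overall strategy as the paper: model completeness comes from $\fg\text{-}\mathrm{DCF}_b$ being an expansion by constants of the model complete theory $\fg\text{-}\mathrm{DCF}$ (Theorem \ref{gdcfexists}), and for the embedding part one passes to an existentially closed $\fg$-field extension $K^*$ and checks that $b_1,\dots,b_e$ remains a $p$-basis there, using Proposition \ref{gdcfmod} to pin down the imperfection degree of $K^*$.

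There is one small but genuine difference in how that last check is carried out. The paper first observes (via Lemma \ref{lemdfp}(1)) that $C_K=K^p$, then invokes \cite[Lemma 3.22]{HK3} to conclude that the extension $K\subseteq K^*$ is \emph{separable}, so that a $p$-basis of $K$ stays $p$-independent in $K^*$; combined with the imperfection degree being $e$, this gives the result. You instead argue directly in $K^*$: the relations $D_i(b_j)=\delta^j_i$ hold in $K^*$, so running the tower $C_i^*=\bigcap_{j\leqslant i}\ker(D_j)$ there and using $C_{K^*}=(K^*)^p$ and $[K^*:(K^*)^p]=p^e$ from Proposition \ref{gdcfmod} forces each step to have degree exactly $p$, whence $K^*=(K^*)^p(b_1,\dots,b_e)$ and $b$ is a $p$-basis. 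Your route avoids the external separability lemma and is entirely internal to the present paper; the paper's route has the small advantage of making the separability of $K\subseteq K^*$ explicit, which is of independent interest. Either way the argument goes through.
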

\begin{proof}
Clearly, any model of $\fg-\mathrm{DCF}_b$ is a model of $\fg-\mathrm{DF}_b$. We need to show that each model of $\fg-\mathrm{DF}_b$ embeds into a model of $\fg-\mathrm{DCF}_b$ and that the theory $\fg-\mathrm{DCF}_b$ is model complete.

It is easy to see the model completeness part, since the theory $\fg-\mathrm{DCF}_b$ is an expansion by constants of the model complete (by Theorem \ref{gdcfexists}) theory $\fg-\mathrm{DCF}$.

Let $(K;b,\partial)$ be a model of the theory $\fg-\mathrm{DF}_b$. By Theorem \ref{gdcfexists}, the $\fg$-field $(K,\partial)$ has a $\fg$-field extension $(K',\partial')$ which is a model of the theory $\fg-\mathrm{DCF}$. By Lemma \ref{lemdfp}, we have $C_K=K^p$. Therefore, by \cite[Lemma 3.22]{HK3}, the field extension $K\subseteq K'$ is separable. Since $b$ is a $p$-basis of $K$ and the extension $K\subseteq K'$ is separable, $b$ is $p$-independent in $K'$. By Proposition \ref{gdcfmod}, the imperfection invariant of $K'$ is $e$, so $b$ is a $p$-basis of $K'$ as well. Hence $(K';b,\partial')$ is a model of $\fg-\mathrm{DCF}_b$ and $(K';b,\partial')$ is also an extension of $(K;b,\partial)$, which finishes the proof.
\end{proof}
Before stating the main result of this section, we advice the reader to recall our notational set-up regarding the theories $T,T_G,T^{\mc},T^{\mc}_G$ from the beginning of this section.
\begin{theorem}\label{appthm}
The theory $T_G^{\mc}$ exists and it is bi-interpretable with the theory $\mathfrak{g}-\mathrm{DCF}_b$.
In particular, the theory $T_G^{\mc}$ is strictly simple.
\end{theorem}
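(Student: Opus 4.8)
The plan is to assemble the theorem from the pieces already built up in Section \ref{secapp}, together with the bi-interpretation machinery of \cite{HK3}. First I would establish that $T_G^{\mc}$ exists: by Lemma \ref{lemdfp}(2) the theory $T_G = (\mathrm{SF}_{p,e})_G$ is bi-interpretable with $\fg-\mathrm{DF}_b$, and by Proposition \ref{finallemma} the theory $\fg-\mathrm{DF}_b$ has a model companion, namely $\fg-\mathrm{DCF}_b$. Since model companions are preserved under bi-interpretation (a bi-interpretation sets up a bijection between models of the two base theories, respecting embeddings and hence existential closedness), the model companion $T_G^{\mc}$ of $T_G$ exists and is bi-interpretable with $\fg-\mathrm{DCF}_b$. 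This immediately gives the bi-interpretability clause of the statement.

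Next I would deal with the ``in particular'' clause, i.e.\ that $T_G^{\mc}$ is strictly simple. By the bi-interpretability just obtained, it suffices to show that $\fg-\mathrm{DCF}_b$ is strictly simple. Since $\fg-\mathrm{DCF}_b$ is just $\fg-\mathrm{DCF}$ expanded by finitely many constants (and with the $p$-basis/$G$-invariance conditions, which are satisfied in a suitable model), and expansion by finitely many constants preserves simplicity, stability, and supersimplicity, it is enough to analyze $\fg-\mathrm{DCF}$ for $\fg = \ga^e[1]\times G_{\Ff_p}$. Simplicity is Theorem \ref{simple}. For strictness, I would invoke Remark \ref{ssrem}(3): here $\fg$ is neither infinitesimal (because the $G_{\Ff_p}$ factor is \'{e}tale and, for $G$ nontrivial, nontrivial) nor \'{e}tale (because the $\ga^e[1]$ factor is infinitesimal and nontrivial, as $e\geq 1$), so $\fg\neq\fg^0$ and $\fg\neq\fg_0$; thus by \cite[Fact 2.6.7]{Kimsim} the PAC field $K^{\fg}$ is neither separably closed nor perfect, whence $\fg-\mathrm{DCF}$ is strictly simple. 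One should note that if $G$ is trivial this degenerates (one gets the stable theory of \cite{HK}), so a remark that $G$ is assumed nontrivial, or that the ``strictly simple'' conclusion is understood in that case, may be in order.

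The step I expect to require the most care is the first one: verifying that ``has a model companion'' genuinely transfers across the bi-interpretation of Lemma \ref{lemdfp}(2). The subtlety is that a bi-interpretation between $T_G$ and $\fg-\mathrm{DF}_b$ must induce an equivalence of categories of models that preserves \emph{substructure} embeddings, so that $K$ is existentially closed as a model of $T_G$ if and only if its image is existentially closed as a model of $\fg-\mathrm{DF}_b$; one then reads off that the class of existentially closed models of $T_G$ is elementary, with theory $T_G^{\mc}$ bi-interpretable with $\fg-\mathrm{DCF}_b$. In the case at hand the interpretations in Lemma \ref{lemdfp}(2) are of the simplest possible type --- one direction just forgets the $\lambda$-functions (definable from the $p$-basis) and adds the derivations $D_j$ via $D_j(b_i)=\delta^j_i$, the other direction reconstructs the $\lambda$-functions --- so they are quantifier-free, identity-on-the-underlying-set interpretations and preserve embeddings on the nose; hence this transfer is routine once stated carefully. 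I would spell this out by a direct argument: given a model $(K;b,\lambda,\sigma_g)$ of $T_G$, pass to the associated $\fg-\mathrm{DF}_b$-model, embed it into a model of $\fg-\mathrm{DCF}_b$ using Proposition \ref{finallemma}, and translate back, noting that the translation is an $L_G$-embedding and that the resulting structure is existentially closed; combined with the model completeness of $\fg-\mathrm{DCF}_b$ this yields that the image of $\fg-\mathrm{DCF}_b$ under the interpretation axiomatizes the existentially closed models of $T_G$.
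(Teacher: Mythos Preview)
Your proposal is correct and follows essentially the same route as the paper: combine Lemma \ref{lemdfp}(2) and Proposition \ref{finallemma} to get existence and bi-interpretability, then invoke Remark \ref{ssrem} for strict simplicity. The paper's proof is two sentences and leaves implicit exactly the points you spell out --- that model companions transfer along the (quantifier-free, identity-on-underlying-set) bi-interpretation, and that expanding by finitely many constants preserves the relevant dividing lines --- so your extra care there is welcome but not a different argument. Your observation about the trivial-$G$ edge case is apt: when $G$ is trivial, $\fg=\ga^e[1]$ is infinitesimal and Remark \ref{ssrem}(1) applies rather than Remark \ref{ssrem}(3), so the theory is stable rather than strictly simple; the paper does not flag this, and it is reasonable to read the ``strictly simple'' clause as tacitly assuming $G$ nontrivial.
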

\begin{proof}
The existence part follows from the bi-interpretability part, which is clear by Lemma \ref{lemdfp}(2) and Proposition \ref{finallemma}. The ``in particular'' part follows from Remark \ref{ssrem} and the bi-interpretability part above.
\end{proof}
\begin{remark}
As we have pointed out before, the theory $T_G^{\mc}$ need not imply the theory $T^{\mc}$ and indeed: if $G$ is non-trivial, then the underlying fields of models of $T_G^{\mc}$ are not separably closed, since the are no non-trivial actions of a finite group on a separably closed field of positive characteristic (the Artin-Schreier theorem).
\end{remark}
We can apply now the techniques from \cite{Hoff3} to obtain several model-theoretic properties of the theory $T_G^{\mc}$.
We assume that $\mathfrak{C}$ is a sufficiently saturated model of $T_G^{\mc}$ and that $\mathfrak{C}$ is embedded in a sufficiently saturated model $\mathfrak{D}$ of $T^{\mc}=\mathrm{SCF}_{p,e}$.
In other words, $\mathfrak{C}$ is an existentially closed field with a $p$-basis, $\lambda$-functions and a group action of $G$,
and $\mathfrak{D}$ is an existentially closed field with a $p$-basis and $\lambda$-functions.
We will consider the model-theoretic algebraic closure inside $\mathfrak{C}$, denoted by $\acl_G(\,\cdot\,)$,
and the model-theoretic algebraic closure inside $\mathfrak{D}$, denoted by $\acl_{\sep}(\,\cdot\,)$. By \cite[Fact 2.7]{Hr7}, if $A\subseteq \mathfrak{D}$ then  $\acl_{\sep}(A)$ coincides with the field theoretic algebraic closure in $\mathfrak{D}$ of the subfield generated by the $p$-basis and the values of the $\lambda$-functions on the elements of $A$.
\begin{cor}\label{danielcor}
Let $A,B,C$ be small subsets of $\mathfrak{C}$ such that $C\subseteq A\cap B$.
\begin{enumerate}
\item By \cite[Corollary 4.12]{Hoff3}, we have:
$$\acl_G(A)=\acl_{\sep}(G\cdot A) \cap\mathfrak{C}.$$
%That is $\acl_G(A)$ is equal to the relative algebraic closure of $G\cdot A$     
%\item Suppose that $A$ is a definably closed subset of $\mathfrak{C}$, then $A\subseteq\acl_G(A)$ is a normal - with respect to automorphisms from %$\gal_{\sep}(A)$ - extension (Proposition 4.14 in \cite{Hoff3}).
    \item By  \cite[Corollary 4.28]{Hoff3}, we have:
    \begin{IEEEeqnarray*}{rCl}
    A\ind_C^{T_G^{\mc}} B 
    %\qquad\iff\qquad& G\cdot A\ind_{G\cdot C}^{T^{\mc}}G\cdot B \\
    &\qquad\iff\qquad& \acl_{\sep}(G\cdot A)\text{ and }\acl_{\sep}(G\cdot B)  \\
    & &\text{ are linearly disjoint over }\acl_{\sep}(G\cdot C),
     \end{IEEEeqnarray*}
   i.e. the forking independence in $T_G^{\mc}$ is given by the $G$-action and the forking independence in $T^{\mc}=\mathrm{SCF}_{p,e}$.
    
    \item By \cite[Remark 4.13]{Hoff3}, $T_G^{\mc}$ has ``almost quantifier elimination" (similarly as ACFA).
    
    \item  By \cite[Remark 4.36]{Hoff3}, $T_G^{\mc}$ has geometric elimination of imaginaries.
\end{enumerate}
\end{cor}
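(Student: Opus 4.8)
The entire content of Corollary \ref{danielcor} is obtained by invoking the general machinery of \cite{Hoff3}, so the plan is not to reprove anything about $\acl$ or forking from scratch, but rather to verify that our concrete situation fits the abstract hypotheses of \cite{Hoff3} and then to transport the conclusions back through the bi-interpretation established in Theorem \ref{appthm}. The key observation enabling everything is that $T^{\mc}=\mathrm{SCF}_{p,e}$ is a model complete theory with quantifier elimination (by Theorem 2.3 of \cite{Del1}, as recalled before Lemma \ref{lemdfp}), so we are genuinely in the set-up from \cite{Hoff3}, where the dynamics of a finite group $G$ acting by $L$-automorphisms on models of $T=\mathrm{SF}_{p,e}$ is analyzed. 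By Theorem \ref{appthm}, the theory $T_G^{\mc}$ exists and is bi-interpretable with $\fg-\mathrm{DCF}_b$; this lets us freely identify $\mathfrak{C}$ (a monster model of $T_G^{\mc}$) with a monster model of $\fg-\mathrm{DCF}_b$, and identify the ambient monster $\mathfrak{D}\models\mathrm{SCF}_{p,e}$ as the separably closed field into which $\mathfrak{C}$ embeds.

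\textbf{Main steps.} First I would make explicit the translation: under the bi-interpretation, $\acl_G(\,\cdot\,)$ is the model-theoretic algebraic closure computed inside $\mathfrak{C}$ in the $G$-action language, while $\acl_{\sep}(\,\cdot\,)$ is the one computed in $\mathfrak{D}$; by \cite[Fact 2.7]{Hr7} the latter is the field-theoretic algebraic closure of the subfield generated by the $p$-basis together with the $\lambda$-values on the given set. Having fixed these identifications, Item (1) is a direct citation of \cite[Corollary 4.12]{Hoff3}, which computes $\acl_G$ in terms of the $T^{\mc}$-algebraic closure of the $G$-orbit; the only thing to check is that the hypotheses of that corollary (model completeness and QE of $T^{\mc}$, finiteness of $G$) hold, which they do. Item (2) is then \cite[Corollary 4.28]{Hoff3}, which characterizes non-forking in $T_G^{\mc}$ as linear disjointness of the $\acl_{\sep}$-closures of the $G$-orbits over the base; again the verification is that our $T^{\mc}$ is stable (it is strictly stable) so its forking independence is given by the well-understood linear-disjointness criterion in separably closed fields. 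Items (3) and (4) are immediate transfers of \cite[Remark 4.13]{Hoff3} and \cite[Remark 4.36]{Hoff3} respectively: the ``almost quantifier elimination'' of $T_G^{\mc}$ (every formula is equivalent, modulo the theory, to an existential formula after naming the $G$-orbit, exactly as for ACFA) and the geometric elimination of imaginaries follow once the general hypotheses are confirmed.

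\textbf{The main obstacle.} Since every assertion is a citation, there is essentially no mathematical obstacle internal to this corollary; the real work has already been done in establishing the bi-interpretation of Theorem \ref{appthm} and in the general development of \cite{Hoff3}. The one point requiring genuine care is bookkeeping: one must confirm that the notion of ``algebraic closure $\acl_{\sep}$ inside $\mathfrak{D}$'' used in \cite{Hoff3} coincides with the concrete description via the $p$-basis and $\lambda$-functions (this is precisely what \cite[Fact 2.7]{Hr7} supplies), and that the $G$-orbit closures referred to in \cite[Corollaries 4.12 and 4.28]{Hoff3} are formed with respect to this same closure operator. Once these identifications are pinned down, each item of the corollary is a faithful restatement of the corresponding result of \cite{Hoff3} applied to our specific $T=\mathrm{SF}_{p,e}$ and finite group $G$, and nothing further needs to be proved.
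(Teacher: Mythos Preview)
Your proposal is correct and matches the paper's approach exactly: the corollary is stated with the citations to \cite{Hoff3} embedded in each item and no separate proof is given, precisely because---as you observe---the only work is to check that the set-up of \cite{Hoff3} applies, which has already been done via Theorem \ref{appthm} and the quantifier-elimination fact for $\mathrm{SCF}_{p,e}$. There is nothing to add.
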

It was shown in \cite{BK} that the theory of actions of a fixed virtually free group  on fields has a model companion. An analogous result holds in the case of actions of a finite group (Theorem \ref{appthm}) or the infinite cyclic group (\cite{Cha02}) on fields of finite imperfection degree. The following natural question arises.
\begin{question}\label{q}
Does the theory of actions of a fixed virtually free group on fields of finite imperfection degree have a model companion? More generally, what is the class of groups such that the model companion above exists?
\end{question}
Let us point out that the ``more generally'' part of Question \ref{q} is wide open even in the case of group actions on arbitrary fields. 

\bibliographystyle{plain}
\bibliography{harvard}

\end{document}